\setlist{font=\normalfont}
\providecommand{\abs}[1]{\left|{#1}\right|}
\newcommand{\sslash}{\mathbin{/\mkern-6mu/}}
\DeclareMathOperator{\tr}{tr}
\DeclareMathOperator{\id}{id}
\DeclareMathOperator{\order}{ord}
\DeclareMathOperator{\Span}{span}
\DeclareMathOperator{\Mod}{Mod}
\DeclareMathOperator{\MaxSpec}{MaxSpec}
\DeclareMathOperator{\Irr}{Irr}
\DeclareMathOperator{\cs}{cs}
\DeclareMathOperator{\End}{End}
\newcommand{\term}[1]{\textit{#1}}
\newcommand{\cx}{\mathbb{C}}
\newcommand{\ints}{\mathbb{Z}}
\newcommand{\reals}{\mathbb{R}}
\newcommand{\iunit}{\mathbf{i}}
\newcommand{\skein}{\mathscr{S}}
\newcommand{\evenS}{\skein^\mathrm{ev}}
\newcommand{\surface}{\Sigma}
\newcommand{\surcl}{\overline{\Sigma}}
\newcommand{\ptorus}{\Sigma_{1,1}}
\newcommand{\hsphere}{\Sigma_{0,4}}
\newcommand{\centV}{\mathcal{Z}}
\newcommand{\charV}{\mathcal{X}}
\newcommand{\azuV}{\mathcal{A}}
\newcommand{\exceptV}{\mathcal{E}}
\newcommand{\ideal}[1]{\langle#1\rangle}
\newcommand{\Fr}[1]{{\tilde{#1}}}
\newcommand{\Ch}{\Phi}
\newcommand{\tildei}{\tilde{\imath}}
\newcommand{\ram}{{\mathrm{ram}}}
\newcommand{\red}{{\mathrm{red}}}
\newtheorem{theorem}{Theorem}[section]
\newtheorem{thm}{Theorem}
\newtheorem{lemma}[theorem]{Lemma}
\newtheorem{proposition}[theorem]{Proposition}
\theoremstyle{remark}
\newtheorem{remark}{Remark}
\theoremstyle{definition}
\newtheorem{definition}{Definition}
\newcommand{\picmargin}{\mathop{}\!}
\newenvironment{linkdiag}{\picmargin\begin{tikzpicture}[scale=0.8,baseline=(ref.base)]}{\node (ref) at (0.5,0.5){\phantom{$-$}};\end{tikzpicture}\picmargin}
\begin{document}

\title{Explicit representations and Azumaya loci of skein algebras of small surfaces}
\author[Tao Yu]{Tao Yu}
\address{Shenzhen International Center for Mathematics, Southern University of Science and Technology, 1088 Xueyuan Avenue, Shenzhen, Guangdong, China}
\email{yut6@sustech.edu.cn}

\begin{abstract}
We construct finite dimensional representations of the Kauffman bracket skein algebra of the one-punctured torus and four-punctured sphere at all roots of unity. The representations are given by explicit formulas. They all have dimensions equal to the PI degrees of the skein algebras, and they realize all classical shadows. We then use the reducibility of these representations to determine the Azumaya loci. In particular, the Azumaya loci of these surfaces contain the smooth loci of the classical shadow varieties, with equality in the case of the one-punctured torus and proper containment in the case of the four-punctured sphere.
\end{abstract}

\maketitle

\section{Introduction}

%In this paper, surfaces are oriented and of finite type.

Let $q\in\cx^\times$ be a nonzero complex parameter. The (Kauffman bracket) skein algebra $\skein_q(\surface)$ of a surface $\surface$, introduced by Przytycki \cite{Pr} and Turaev \cite{Tu}, is a $\cx$-algebra spanned by framed unoriented links in the thickened surface $\surface\times(-1,1)$ modulo the Kauffman bracket relations
\begin{align}
\begin{linkdiag}
\fill[gray!20!white] (-0.1,0)rectangle(1.1,1);
\begin{knot}[clip width=8,background color=gray!20!white]
\strand[very thick] (1,1)--(0,0);
\strand[very thick] (0,1)--(1,0);
\end{knot}
\end{linkdiag}
&=q
\begin{linkdiag}
\fill[gray!20!white] (-0.1,0)rectangle(1.1,1);
\draw[very thick] (0,0)..controls (0.5,0.5)..(0,1);
\draw[very thick] (1,0)..controls (0.5,0.5)..(1,1);
\end{linkdiag}
+q^{-1}
\begin{linkdiag}
\fill[gray!20!white] (-0.1,0)rectangle(1.1,1);
\draw[very thick] (0,0)..controls (0.5,0.5)..(1,0);
\draw[very thick] (0,1)..controls (0.5,0.5)..(1,1);
\end{linkdiag},\label{eq-skein}\\
\begin{linkdiag}
\fill[gray!20!white] (0,0)rectangle(1,1);
\draw[very thick] (0.5,0.5)circle(0.3);
\end{linkdiag}
&=(-q^2-q^{-2})
\begin{linkdiag}
\fill[gray!20!white] (0,0)rectangle(1,1);
\end{linkdiag}.\label{eq-loop}
\end{align}
The skein algebra is connected to many areas in low dimensional topology. Quantizations of character varieties \cite{Bul,BFK,PS} and Teichm\"uller spaces \cite{BWqtr,Mu} are a few examples.

We are interested in the representations of skein algebras. (All representations are finite dimensional in this paper.) When $q$ is a root of unity, the representation theory has a rich structure. First we recall some general theory. Let $S$ be a finitely generated $\cx$-algebra without zero divisors, and let $Z$ be the center of $S$. Suppose $X$ is a subalgebra of $Z$ such that $S$ is finitely generated as an $X$-module. Let $D^2=\dim_{\tilde{Z}}(S\otimes_Z\tilde{Z})<\infty$, where $\tilde{Z}$ is the field of fractions of $Z$. This dimension is always a perfect square, and its square root $D$ is called the \term{PI degree} of $S$.

Any irreducible representation of $S$ restricts to a representation of $Z$ and, by Schur's lemma, defines an algebra homomorphism $Z\to\cx$ called the \term{classical shadow}. This is a point in $\centV:=\MaxSpec Z$, which we call the \term{classical shadow variety}.

\begin{thm}[See e.g. \cite{BG,BY}]
Let $\Irr(S)$ be the set of isomorphism classes of irreducible representations of $S$ and $\cs:\Irr(S)\to\centV$ be the map sending an irreducible representation to its classical shadow. Then $\cs$ is surjective.

Every irreducible representation of $S$ has dimension at most $D$, the PI degree. Let $\Irr_D(S)\subset\Irr(S)$ be the set of $D$-dimensional irreducible representations. Then $\cs$ is injective on $\Irr_D(S)$, and $\azuV=\cs(\Irr_D(S))\subset\centV$ is open dense.
\end{thm}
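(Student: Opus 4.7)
The plan is to reduce the theorem to the standard Posner--Artin--Procesi theory for prime PI rings. Since $S$ has no zero divisors it is prime, so Posner's theorem gives $\mathcal{S}:=S\otimes_Z\tilde{Z}$ the structure of a central simple algebra of dimension $D^2$ over $\tilde{Z}$, equipped with a reduced trace $\tr\colon\mathcal{S}\to\tilde{Z}$; this realizes $S$ as a $Z$-order in what becomes $M_D(\bar{\tilde{Z}})$ after passing to an algebraic closure, explaining why $D$ is an integer. From the hypothesis that $S$ is finite over $X\subseteq Z$, it is also finite over $Z$, and Artin--Tate (applied to $\cx\subseteq Z\subseteq S$) then forces $Z$ to be a finitely generated $\cx$-algebra, so that $\centV=\MaxSpec Z$ is an affine variety and every residue field at a closed point is $\cx$.

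Surjectivity of $\cs$ and the dimension bound are quick consequences. For $\chi\in\centV$ with maximal ideal $\mathfrak{m}\subset Z$, the quotient $S/\mathfrak{m}S$ is a nonzero (by Nakayama) finite-dimensional $\cx$-algebra, so it admits a simple module $V$, and $V$ is an irreducible $S$-module with classical shadow $\chi$. For any irreducible $V$ with central character $\chi$, the image of $S$ in $\End_\cx(V)$ is a primitive PI algebra of PI degree at most $D$, hence by Kaplansky's theorem a matrix algebra $M_k(\cx)$ with $k\leq D$; Jacobson density then gives $\dim V=k\leq D$.

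The substantive step, which I expect to be the main obstacle, is the description of $\azuV$. I would define it intrinsically as the Azumaya locus $\{\chi\in\centV:S/\mathfrak{m}S\cong M_D(\cx)\}$, equivalently the set of $\chi$ at which the localization $S_\mathfrak{m}$ is Azumaya over $Z_\mathfrak{m}$. Over such $\chi$, Wedderburn gives $S/\mathfrak{m}S$ a unique simple module, of dimension $D$, which establishes both injectivity of $\cs$ on $\Irr_D(S)$ and one inclusion $\azuV\subseteq\cs(\Irr_D(S))$. Conversely, if $V\in\Irr_D(S)$ has shadow $\chi$, then $S/\mathfrak{m}S$ surjects onto $\End_\cx(V)\cong M_D(\cx)$, forcing $\dim_\cx(S/\mathfrak{m}S)\geq D^2$, while the reduced trace pairing $(a,b)\mapsto\tr(ab)$ gives the reverse inequality at points of the Azumaya locus, producing equality and hence $\chi\in\azuV$. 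Finally, open-density of $\azuV$ is the Artin--Procesi theorem: $\azuV$ is the nonvanishing locus in $\centV$ of the discriminant ideal generated by the determinants of the trace form on $D^2$-tuples of generators of $S$, and this ideal is nonzero because the form is nondegenerate on the central simple algebra $\mathcal{S}$. Since $Z$ is a domain, $\centV$ is irreducible, and a nonempty Zariski-open subset is automatically dense, completing the proof.
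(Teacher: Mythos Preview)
The paper does not prove this statement; it is quoted from the literature with references to Brown--Goodearl and Brown--Yakimov, so there is no in-paper argument to compare against. Your overall architecture is the standard one, and the pieces on surjectivity (Nakayama on $S/\mathfrak{m}S$), the dimension bound (Kaplansky plus Burnside over $\cx$), and density (irreducibility of $\centV$) are fine.

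The gap is in the converse inclusion $\cs(\Irr_D(S))\subseteq\azuV$. From a $D$-dimensional irreducible $V$ with shadow $\chi$ you correctly get a surjection $S/\mathfrak{m}S\twoheadrightarrow M_D(\cx)$ and hence $\dim_\cx(S/\mathfrak{m}S)\geq D^2$; but then you write that the reduced trace pairing supplies the reverse inequality ``at points of the Azumaya locus, producing equality and hence $\chi\in\azuV$.'' That is circular: nondegeneracy of the trace form at $\mathfrak{m}$ is essentially the Azumaya condition you are trying to establish, and in fact $\dim_\cx(S/\mathfrak{m}S)\geq D^2$ holds at \emph{every} closed point by upper semicontinuity of fiber dimension, so the inequality you derive carries no information. (There is also a secondary issue: the reduced trace on $\mathcal{S}$ need not send $S$ into $Z$ unless $Z$ is integrally closed, so your discriminant-ideal description of $\azuV$ requires that hypothesis as well.) The clean fix avoids the trace form entirely and uses the central-polynomial form of Artin--Procesi. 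Let $g$ be a Formanek--Razmyslov central polynomial for $D\times D$ matrices: it takes central values on any ring, vanishes identically on $M_{D-1}$, but not on $M_D$. Then every evaluation $g(s_1,\dots,s_m)$ lies in $Z$, and $\azuV$ is exactly the nonvanishing locus of the ideal $g(S)\subseteq Z$. Given $V\in\Irr_D(S)$, lift through the surjection $S\twoheadrightarrow M_D(\cx)$ a tuple of matrices on which $g$ is nonzero; the resulting $g(s_1,\dots,s_m)\in Z$ has $\chi(g(s_1,\dots,s_m))\neq 0$, so $\mathfrak{m}\not\supseteq g(S)$ and $\chi\in\azuV$. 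This simultaneously yields the inclusion, injectivity on $\Irr_D(S)$, and openness of $\azuV$, with no normality assumption on $Z$.
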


The set $\azuV$ is called the \term{Azumaya locus} of $S$. Since $Z$ is a finite extension of $X$, there is a finite map $\pi:\centV\to\charV:=\MaxSpec X$. The subset $\azuV_0=\{x\in\charV\mid \pi^{-1}(x)\subset\azuV\}$ is called the \term{fully Azumaya locus}. Strictly speaking, the fully Azumaya locus depends on the choice of $X$.

Now we specialize to skein algebras. %For simplicity, we only explain the results when $n$ is odd in this introduction. The results for all roots of unity can be found later.
By \cite{Bulfg,PS2,FKL1}, when $q$ is a root of unity, $S=\skein_q(\surface)$ satisfy the conditions above. There is a natural choice of $X$ given in Section~\ref{sec-center}. In particular, $\charV$ is isomorphic to the $SL_2$-character variety
\begin{equation*}
\chi(\surface)=\{\text{homomorphisms }\pi_1(\surface)\to SL_2\}\sslash SL_2
\end{equation*}
or a quotient of it. The variety $\charV$ has a partition investigated in \cite{FKL3}. Let $\surface=\surface_{g,v}$ be the orientable surface of genus $g$ with $v$ punctures, and let $p_1,\dotsc,p_v$ be the peripheral curves around the punctures. For $\vec{W}\in\cx^v$, setting the trace around $p_i$ to be $W_i$ for all $i=1,\dotsc,v$ defines a sliced character variety, denoted $\charV_{\vec{W}}$.

\begin{thm}[\cite{FKL3}]
The smooth locus of $\charV_{\vec{W}}$ is included in the fully Azumaya locus of $\skein_q(\surface_{g,v})$ for all $\vec{W}\in\cx^v$.
\end{thm}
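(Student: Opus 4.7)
The plan is to apply a general \emph{smooth-implies-Azumaya} principle for prime PI algebras, appropriately slicing the skein algebra along peripheral traces.

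The general principle (after Brown--Goodearl and M\"uller) can be stated as follows: if $R$ is a prime affine PI algebra, module-finite over an affine central subalgebra $Y$, and $R$ is a maximal order in its quotient division algebra that is Cohen--Macaulay as a $Y$-module of constant rank equal to the square of the PI degree, with $Y$ itself Cohen--Macaulay, then the pullback of the Azumaya locus of $R$ to $\MaxSpec Y$ contains the smooth locus of $\MaxSpec Y$. In particular, for a maximal ideal $\mathfrak{m}\subset Y$ coming from a smooth point, $R/\mathfrak{m}R\cong\mathrm{Mat}_D(\cx)$ is a simple matrix algebra, which forces the unique prime of the full center $Z$ above $\mathfrak{m}$ to lie in the Azumaya locus, i.e.\ the smooth point is fully Azumaya.

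I would apply this principle to the \emph{sliced skein algebra} $S_{\vec W}:=\skein_q(\surface)/\ideal{p_i-W_i \mid i=1,\dotsc,v}$ over the sliced central subalgebra $Y_{\vec W}:=X/\ideal{p_i-W_i \mid i=1,\dotsc,v}$. Since peripheral loops are central in $\skein_q(\surface)$ for every value of $q$, this ideal lies inside $X$ and $\MaxSpec Y_{\vec W}$ is exactly $\charV_{\vec W}$. The structural hypotheses for $(S_{\vec W},Y_{\vec W})$ should reduce to the corresponding properties of $(\skein_q(\surface),X)$ established by Przytycki--Sikora, Bullock, and Frohman--Kania-Bartoszynska--L\^e, together with the fact that $\cx[p_1,\dotsc,p_v]$ sits in $X$ as a polynomial subring. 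Cohen--Macaulayness of $\charV_{\vec W}$ follows from the complete-intersection / symplectic-quotient presentation of the sliced $SL_2$-character variety. The general principle then produces, at each smooth point of $\charV_{\vec W}$, a simple matrix-algebra fibre of $S_{\vec W}$, yielding a $D$-dimensional irreducible representation of $\skein_q(\surface)$ (pulled back along $\skein_q(\surface)\twoheadrightarrow S_{\vec W}$) whose classical shadow witnesses the point as fully Azumaya.

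The main obstacle is the slicing step: one must verify that $(p_1-W_1,\dotsc,p_v-W_v)$ is a regular sequence in both $X$ and $\skein_q(\surface)$ for \emph{all} $\vec W$ (including non-generic ones), that $S_{\vec W}$ remains prime with PI degree equal to $D$, and that the Cohen--Macaulay structure is preserved. For non-generic $\vec W$ where the PI degree of $S_{\vec W}$ would drop, one must further show that the smooth locus of $\charV_{\vec W}$ is empty so that the statement holds vacuously. These regularity and dimension checks, which form the main technical content of~\cite{FKL3}, require detailed control of $\skein_q(\surface)$ as an $X$-module and of $X$ as a module over $\cx[p_1,\dotsc,p_v]$; once they are in place, the smooth-to-fully-Azumaya conclusion is formal.
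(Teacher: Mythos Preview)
This theorem is quoted from \cite{FKL3} rather than proved in the present paper; the paper only remarks that \cite{FKL3} ``utilizes the theory of Poisson orders,'' and its own contribution is the separate constructive argument for $\ptorus$ and $\hsphere$. So there is no in-paper proof to match against, but your outline is \emph{not} the Poisson-order argument of \cite{FKL3}. In the Brown--Gordon framework one shows that $\skein_q(\surface)$ is a Poisson $X$-order, the induced bracket on $\charV$ being (up to the Chebyshev map) the Atiyah--Bott--Goldman bracket; each slice $\charV_{\vec W}$ is a union of symplectic leaves whose smooth locus is a single leaf, and the Poisson-order machinery forces the fibre algebra to be constant up to isomorphism along a leaf, so density of the Azumaya locus pulls the whole smooth leaf in. This bypasses maximal-order and Cohen--Macaulay verifications entirely.

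Your Brown--Goodearl/M\"uller route is a plausible alternative, but two points need correction. First, the slicing: $p_i\in\skein_q(\surface)$ and $\hat p_i\in X$ are related by $\Ch(\hat p_i)=T_N(p_i)$, not identified, so $\skein_q/\ideal{p_i-W_i}$ is not a module over $X/\ideal{\hat p_i-W_i}$. The correct object is $\skein_{q,\vec w}$ for each $\vec w$ with $T_N(w_i)=W_i$, with center $\Ch_{\vec w}(X_{\vec W})$ (Theorem~\ref{thm-sliced-center}); you then run the criterion once per such $\vec w$ to cover the full fibre in $\centV$. Second, your claim that the regularity/Cohen--Macaulay/maximal-order checks ``form the main technical content of \cite{FKL3}'' is off: those are the hypotheses of \emph{your} criterion, not theirs. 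Primeness and PI-degree stability of the slice are indeed available (Theorem~\ref{thm-sliced-center}(3)), but the homological hypotheses of the Brown--Goodearl theorem for $\skein_{q,\vec w}$ would require a genuinely different set of lemmas from what \cite{FKL3} proves.
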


The main goal of this paper is to determine the Azumaya loci of two of the building blocks of surfaces: the one-punctured torus $\ptorus$ and the four-punctured sphere $\hsphere$.

\begin{thm}[Theorems~\ref{thm-main-ptorus} and \ref{thm-main-hsphere}]
Suppose $q$ is a root of unity.
\begin{enumerate}
\item For both $\surface=\ptorus$ and $\hsphere$, and for every $z\in\centV$, there exists a representation of $\skein_q(\surface)$ with classical shadow $z$ and dimension equal to the PI degree.
\item For both $\surface=\ptorus$ and $\hsphere$, the fully Azumaya locus of $\skein_q(\surface)$ is the union of the smooth loci of the slices $\charV_{\vec{W}}$ for all $\vec{W}\in\cx^v$.
\item The Azumaya locus of $\skein_q(\ptorus)$ is the smooth locus of $\centV$.
\item The Azumaya locus of $\skein_q(\hsphere)$ properly contains the smooth locus of $\centV$. The complement $\azuV^c$ of the Azumaya locus has one less component than the singular locus.
\end{enumerate}
\end{thm}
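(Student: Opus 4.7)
The plan is to prove the four parts in order, building on a single explicit construction. First, for part (1), I would write each skein algebra in terms of convenient generators: the three simple closed curves $x,y,z$ on $\ptorus$ satisfying a quantum cubic relation together with a quantum Fricke identity involving the peripheral curve, and for $\hsphere$ three loops coming from one of its pair-of-pants decompositions together with the four peripheral curves. Fixing a classical shadow $z\in\centV$ pins down the values of the Frobenius-central elements and of the peripheral curves, giving a finite-dimensional quotient of $\skein_q(\surface)$ whose dimension can be computed and compared with the PI degree $D$. One then writes $D\times D$ matrices for the chosen generators, with entries built from $q$-powers and coordinates of $z$, modeled on the weight-basis action of a discrete Weyl algebra, and checks that these matrices satisfy the defining relations of the skein algebra. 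Surjectivity of $\cs$ follows, and by counting one sees that the dimension is forced to equal $D$.

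For part (2), I would combine part (1) with the containment from \cite{FKL3}. A point $x\in\charV$ lies in $\azuV_0$ if and only if every $z\in\pi^{-1}(x)$ admits a unique irreducible representation of dimension $D$, equivalently if and only if the explicit representation from part (1) is irreducible for each such $z$. If $x$ lies in the smooth locus of its slice $\charV_{\vec{W}}$, irreducibility holds by the cited theorem. For the converse, at a point $x$ that is singular in its slice, I would exhibit a nontrivial invariant subspace of the explicit matrices, either by locating a common eigenvector of the commuting peripheral operators with an extra coincidence of eigenvalues, or by producing a quotient that factors through a smaller skein-type algebra along a simple closed curve whose action degenerates. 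The Jacobian criterion describing the singular locus of the slice should match, term by term, the algebraic condition for the matrices to admit such an invariant subspace.

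Parts (3) and (4) are then a geometric comparison between the singular locus of $\centV$ and the union over $\vec{W}$ of the singular loci of the slices $\charV_{\vec{W}}$. For $\ptorus$, the classical shadow variety is a one-parameter family of affine cubics of the form $x^2+y^2+z^2-xyz+c(p)=0$, and a direct Jacobian computation shows that every singular point of the total space is already singular in its slice; combined with part (2) this gives the equality in (3). For $\hsphere$, the slices form a four-parameter family of cubic surfaces, and there exist components of the singular locus of $\centV$ on which each individual slice remains smooth while the family degenerates transversally to the slicing. Such components lie in $\azuV$ but not in the smooth locus of $\centV$. I would enumerate these components and verify directly, using the explicit representations of part (1), that irreducibility persists there; counting components then yields the discrepancy of one stated in (4).

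The main obstacle I expect is the reducibility analysis in part (2): writing matrices and checking the skein relations is constructive bookkeeping, but controlling when those matrices admit an invariant subspace requires detecting a coincidence between algebraic conditions on the entries and the Jacobian of the slice's defining equations. The complementary task in (4), of showing that the explicit representations remain irreducible on precisely the right singular components of $\centV$ for $\hsphere$, will require a case analysis that cannot be reduced to the Jacobian calculation for slices, and this is where the proof is least mechanical.
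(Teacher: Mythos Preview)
Your overall strategy---explicit $D$-dimensional representations for each shadow, then a reducibility analysis---matches the paper's, though the paper is fully constructive and does not invoke \cite{FKL3} for either direction of (2). There are, however, two genuine gaps.

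First, the Weyl-algebra-style matrices you propose in (1) require some generating curve to act with distinct eigenvalues, i.e.\ $A_m(z)\ne\pm2$ for some slope $m$. Points with $A_m(z)=\pm2$ for \emph{every} $m$ (the paper calls them \emph{exceptional}) form a nonempty finite set and need a separate construction, obtained in the paper as a limit of the generic one after a nontrivial change of basis. Without this, (1) is incomplete, and since several exceptional points are singular, their reducibility analysis is also needed for (2)--(4).

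Second, your mechanism for (4) is wrong. You claim the extra Azumaya points for $\hsphere$ lie on components of the singular locus of $\centV$ where the slice is smooth. In fact \emph{every} singular point of $\centV$ is already slice-singular: the singular locus of $\centV$ consists of characters with $r(p_i)=\pm I$ and $w_i\ne\pm2$ together with characters of reducible representations, and both types are slice-singular by the classical description of $SL_2$-character-variety singularities. The component of the singular locus that nonetheless lies in $\azuV$ is parameterized by $\vec\eta\in(\cx^\times)^4$ with $\eta_1\eta_2\eta_3\eta_4=1$; it consists of reducible characters and is singular in both $\centV$ and the slice, so neither \cite{FKL3} nor any slice-smoothness argument applies there. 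The paper establishes irreducibility on this component by a direct combinatorial criterion: in the type-$0$ representation, reducibility is equivalent to $\Pi_s=\Pi_t=0$ together with the vanishing of two \emph{distinct} $r_i$, and on this component only one $r_i$ vanishes. More broadly, your ``geometric comparison'' cannot suffice for (3)--(4) because part (2) only tells you whether \emph{all} preimages of a point of $\charV$ are Azumaya, whereas (3)--(4) ask about individual points of $\centV$; for $\hsphere$ different preimages of the same slice-singular point of $\charV$ land on different sides of $\azuV$.
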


Our proof is constructive, and thus independent of the approach of \cite{FKL3}, which utilizes the theory of Poisson orders. In particular, the existence in (1) is proved with explicit formulas of representations, and the Azumaya loci are obtained by investigating the reducibility of these representations.

Some prior constructive results include \cite{HP} and \cite{Tak}. Many of our results are similar to theirs, which is inevitable due to the uniqueness over Azumaya loci. However, the conditions of our results are different, and this allowed us to fill many edge cases.

\section{Kauffman bracket skein algebra}

In this section, we review the definition and basic properties of the Kauffman bracket skein algebra of a surface $\surface$. We assume $\surface$ is an oriented surface obtained by removing zero or finitely many points (punctures) from a closed surface $\surcl$. We also exclude the sphere with at most 3 punctures.

\subsection{Definition}

Let $\surface$ be a surface and $M=\surface\times(-1,1)$. A (framed) link $\alpha$ in $M$ is a closed 1-dimensional submanifold of $M$ with a transverse vector field along it. Two links are isotopic if they are homotopic in the class of links. The empty set, by convention, is a link which is isotopic only to itself. As usual, links are depicted by diagrams on $\surface$, and the framing of a diagram is vertical.

For a nonzero $q\in\cx$, let $\skein_q(\surface)$ be the vector space over $\cx$ spanned by isotopy classes of links modulo the skein relation \eqref{eq-skein} and the trivial loop relation \eqref{eq-loop}.
%Here each shaded part is a part of $\surface$, with a link diagram on it indicated by the thick lines. The parameter $q$ is omitted in generic statements.
For two links $\alpha$ and $\beta$, the product $\alpha\beta$ is defined as the result of stacking $\alpha$ above $\beta$. This is extended linearly to make $\skein_q(\surface)$ a $\cx$-algebra.

%TODO: no zero divisor ?

%\subsection{Algebra homomorphisms between skein algebras}

\begin{comment}
%TODO: include diff surfaces?
Given an embedding of surfaces $\iota:\surface\to\surface'$, there is a product embedding $\surface\times(-1,1)\to\surface'\times(-1,1)$. This induces a $\cx$-algebra homomorphism
\[\iota_\ast:\skein_q(\surface)\to\skein_q(\surface').\]
Clearly, $\iota_\ast$ only depends on the isotopy class of $\iota$. As a result, the mapping class group $\Mod(\surface)$ acts on $\skein_q(\surface)$.
\end{comment}

A diagram on $\surface$ has a well-defined homology class in $H:=H_1(\surface;\mathbb{Z}/2)$, and the defining relations \eqref{eq-skein} and \eqref{eq-loop} preserves this homology classes. Clearly, stacking diagrams has the effect of addition on homology. Thus, $\skein_q(\surface)$ is an $H$-graded algebra. Let $H^\ast=H^1(\surface;\ints/2)$. Then $\skein_q(\surface)$ has an $H^\ast$-action given by
\begin{equation}
\phi\cdot\alpha=(-1)^{\phi(h)}\alpha
\end{equation}
for $\phi\in H^\ast$ and a homogeneous element $\alpha\in\skein_q(\surface)$ with degree $h\in H$.

Let $\bar{H}=H_1(\surcl;\ints/2)$. The inclusion $\surface\hookrightarrow\surcl$ induces $H\twoheadrightarrow\bar{H}$, which determines an $\bar{H}$-grading of $\skein_q(\surface)$. The subalgebra with $0$ grading in $\bar{H}$ is the \term{even} subalgebra, denoted $\evenS_q(\surface)$. Equivalently, $\evenS_q(\surface)$ is the invariant subalgebra under the action of $\bar{H}^\ast=H^1(\surcl;\ints/2)\hookrightarrow H^\ast$.

More geometrically, a simple diagram is in the even subalgebra if and only if its geometric intersection number is even with all closed curves on $\surface$. Note that peripheral curves, which are curves on $\surface$ that bound one-punctured disks, are even. This is why only the homology of $\surcl$ is considered.

\subsection{Skein algebra at $q=\pm1$}\label{sec-skein1}

Given two diagrams $\alpha,\beta$, the product $\beta\alpha$ differs from $\alpha\beta$ by changing the crossing between $\alpha$ and $\beta$. The skein relation \eqref{eq-skein} implies that the product is non-commutative in general. However, the crossing difference is not present when $q=\pm1$, so the skein algebra is commutative in this case.

When $q=-1$, the skein algebra is isomorphic to the algebra of regular functions on the $SL_2$-character variety \cite{Bul,PS} (the field $\cx$ is omitted throughout)
\begin{equation}
\chi(\surface)=\{\text{homomorphisms }\pi_1(\surface)\to SL_2\}\sslash SL_2
\end{equation}
where the $SL_2$-action is conjugation. A simple closed curve $\alpha\in\skein_{-1}(\surface)$ corresponds to the trace function $\tau_\alpha$ given by
\begin{equation}\label{eq-charV-tr}
\tau_\alpha(r)=-\tr(r(\alpha)),\qquad (r:\pi_1(\surface)\to SL_2)\in\charV(\surface).
\end{equation}

When $q=1$, the skein algebra is also isomorphic to $\skein_{-1}(\surface)$. However, the isomorphism is not canonical: it depends on a choice of a spin structure on $\surface$. See \cite{Ba}.

There is a similar isomorphism result for the even subalgebra.

\begin{theorem}[\cite{FKL1}, Theorem~2.6]\label{thm-even-iso}
Suppose $\epsilon^4=1$. There is an isomorphism $\evenS_\epsilon(\surface)\to\evenS_{-1}(\surface)$. If $\alpha$ is an even link with $l$ components, then the isomorphism maps $\alpha$ to $(\epsilon^2)^l\alpha$.
\end{theorem}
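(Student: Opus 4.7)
The plan is to define the map $\Phi\colon\evenS_\epsilon(\surface)\to\evenS_{-1}(\surface)$ on the common basis of (isotopy classes of) even simple multicurves by $\Phi([\alpha])=(\epsilon^2)^{l(\alpha)}[\alpha]$ and extend linearly. Since $\epsilon^4=1$ makes $(\epsilon^2)^l\in\{\pm1\}$ and both algebras share the same basis, $\Phi$ is automatically a linear bijection (with inverse of the same form, since $\epsilon^{-2}=\epsilon^2$), so the content reduces entirely to checking the algebra-homomorphism identity $\Phi(\alpha\cdot\beta)=\Phi(\alpha)\Phi(\beta)$.

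To verify multiplicativity, I would expand both sides via the skein relation. If $\alpha,\beta$ meet transversely at $n$ points on $\surface$, then placing $\alpha$ above $\beta$ and resolving every crossing via \eqref{eq-skein} gives $\alpha\cdot\beta=\sum_r q^{2k(r)-n}\gamma_r$ summed over the $2^n$ smoothing states, where $k(r)$ counts the $A$-smoothings and $\gamma_r$ is the resulting simple multicurve. Evenness of $\alpha,\beta$ forces the $\ints/2$-intersection pairing in $\bar H$ to vanish, hence $n$ is even; the coefficient rewrites as $(\epsilon^2)^{k(r)-n/2}$ at $q=\epsilon$ and collapses to $1$ at $q=-1$. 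Compatibility with the trivial-loop relation is immediate from $-\epsilon^2-\epsilon^{-2}=-2\epsilon^2$ when $\epsilon^4=1$ (removing a trivial loop decreases $l$ by $1$ and the factor $\epsilon^2$ absorbs the discrepancy). Collecting basis-element by basis-element, the desired identity reduces to the parity congruence
\[
k(r)+l(\gamma_r)\equiv l(\alpha)+l(\beta)+\tfrac{n}{2}\pmod 2
\]
for every state $r$.

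The parity congruence is the technical heart. First, I would argue that $k(r)+l(\gamma_r)\bmod 2$ is independent of $r$: flipping one smoothing changes $k$ by $1$ and, under the Kauffman $A/B$ convention on the oriented surface $\surface$, changes $l(\gamma_r)$ by $\pm 1$ (either merging two components of $\gamma_r$ or splitting one into two). Second, I would evaluate at the all-$B$ state $r_0$ (so $k=0$): the congruence then becomes the purely topological statement $l(\gamma_{r_0})\equiv l(\alpha)+l(\beta)+n/2\pmod 2$, provable by induction on $n$ with base case $\gamma_{r_0}=\alpha\sqcup\beta$ at $n=0$ and an inductive step in which a Reidemeister-II bigon adds two crossings, shifts $n/2$ by $1$, and flips the parity of $l$.

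The main obstacle is the first half of the parity argument. In principle, a "diagonal" local matching of the four outside strand-ends at some crossing could leave $l(\gamma_r)$ unchanged under a flip, breaking the invariance. The subtlety is to rule out these configurations, and here the Kauffman smoothing convention (locally pairing an $\alpha$-end with a $\beta$-end, tied to the orientation of $\surface$) together with the evenness of $\alpha,\beta$ (forcing alternating type along any return path) must be used to show the degenerate case cannot occur. Once the parity lemma is in place, $\Phi$ is an algebra homomorphism, and the scaling by a unit makes it the desired isomorphism.
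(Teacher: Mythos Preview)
The paper does not prove this theorem; it is quoted verbatim as Theorem~2.6 of \cite{FKL1}, so there is no in-paper argument to compare against. Your strategy---defining $\Phi$ on the common basis of even simple multicurves by the component-count sign and reducing multiplicativity to the parity congruence $k(r)+l(\gamma_r)\equiv l(\alpha)+l(\beta)+\tfrac{n}{2}\pmod 2$---is exactly the natural direct argument, and the congruence is true.

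That said, both halves of your parity argument have real gaps. For flip-invariance, the ``diagonal'' outside pairing you worry about is \emph{not} excluded merely by the fact that each Kauffman smoothing locally joins an $\alpha$-end to a $\beta$-end: after smoothing the other crossings, an outside arc can pass through an even number of them and return to an $\alpha$-end, so your alternation heuristic does not rule it out. What does rule it out is a checkerboard coloring: evenness of $\alpha,\beta$ in $\bar H=H_1(\surcl;\ints/2)$ means every loop on $\surface$ meets $\alpha\cup\beta$ evenly, so the complementary regions of the $4$-valent graph $\alpha\cup\beta$ are $2$-colorable; since opposite quadrants at a crossing share a color, every smoothing merges two like-colored regions, and a single flip becomes a $1$-handle attachment or detachment on the black region, which always changes the number of boundary components by $\pm1$. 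For the base case, your Reidemeister-II induction does not terminate: if $\alpha$ and $\beta$ intersect essentially, minimal position has $n>0$ and there is no bigon to remove, so you never reach $n=0$. One fix, once flip-invariance is established, is to evaluate $k(r)+l(\gamma_r)\bmod 2$ at a single convenient state by computing the Euler characteristic of the black region on $\surcl$ and using $\chi(F)\equiv|\partial F|\pmod 2$; another is to show invariance of $l(\gamma_{r_0})-\tfrac{n}{2}\bmod 2$ under both Reidemeister-II and Reidemeister-III moves and then specialize to any convenient diagram rather than to $n=0$.
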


\subsection{Roots of Unity}

When $q$ is a root of unity, let
\begin{equation}
n=\order(q^2),\quad N=\order(q^4),\quad\text{ and }\epsilon=q^{N^2}.
\end{equation}
These notations will be used throughout the paper.

A useful identity that is easy to check case by case is
\begin{equation}\label{eq-zeta2N}
(-q^2)^N=-\epsilon^2.
\end{equation}

\subsection{Chebyshev homomorphism}

An important tool to describe central elements of the skein algebra is the Chebyshev homomorphism. The \term{Chebyshev homomorphism} $\Ch:\skein_\epsilon(\surface)\to\skein_q(\surface)$ is an algebra embedding introduced in \cite{BW}. It is characterized by the effect on simple closed curves. Given a simple closed curve $\alpha$, let $\alpha$ denote the corresponding element in $\skein_q(\surface)$ and $\hat\alpha$ denote the corresponding element in $\skein_\epsilon(\surface)$. Then
\begin{equation}
\Ch(\hat\alpha)=T_N(\alpha),
\end{equation}
where $T_N(x)\in\ints[x]$ is the Chebyshev polynomial characterized by the equation
\begin{equation}
T_N(t+t^{-1})=t^N+t^{-N}.
\end{equation}
Alternatively, $T_N(x)$ is defined recursively by $T_0(x)=2$, $T_1(x)=x$, and $T_k(x)=xT_{k-1}(x)-T_{k-2}(x)$.

\begin{comment}
\subsection{Azumaya locus}\label{sec-Azumaya}

Let $S$ be a finitely generated $\cx$-algebra with no zero divisors, and let $Z$ be the center of $S$. Suppose $X$ is a subalgebra of $Z$ and $S$ is finitely generated as an $X$-module.

Let $\Fr{Z}$ be the field of fractions of $Z$, and $\Fr{A}:=\Fr{Z}\otimes_Z S$. It is known that $\Fr{S}$ is a division algebra with center $\Fr{Z}$. Thus, the dimension of $\Fr{S}$ over $\Fr{Z}$ is a perfect square $D^2$. The number $D$ is known as the \term{PI degree} of $S$.

\begin{theorem}[See e.g. \cite{BG,BY,DP}] \label{thm.Azu}
Let $\Irr(A)$ denote the set of all equivalence classes of irreducible representations of $A$ over $\cx$. Then we have the classical shadow map
\[\mathcal{C}: \Irr(A) \to \centV,\qquad \rho\mapsto\rho_Z.\]
\begin{enumerate}[(a)]
\item Every irreducible representation of $A$ has dimension at most $D$, the PI degree of $A$.
\item $\mathcal{C}$ is surjective and finite-to-one.
\item The image under $\mathcal{C}$ of all $D$-dimensional irreducible representations is a Zariski open dense subset $\azuV\subset\centV$. Moreover, every point of $U$ has exactly one preimage.
\end{enumerate}
\end{theorem}
The set $\azuV$ is known as the \term{Azumaya locus} of $A$.
\end{comment}

\subsection{Center of the skein algebra}
\label{sec-center}

Peripheral curves are in the center of the skein algebra since they do not intersect other diagrams after isotopy. Using the basis of \cite{Pr}, it is easy to see that they generate a polynomial subalgebra. When $q$ is not a root of unity, this subalgebra is the entire center of the skein algebra \cite{PS2}.

When $q$ is a root of unity, the center is much bigger.

\begin{theorem}\label{thm-skein-center}
Suppose $\surface=\surface_{g,v}$ is the surface with genus $g$ and $v$ punctures. Let
\begin{equation}\label{eq-defX}
X=\begin{cases}
\skein_\epsilon(\surface),&n\text{ is odd},\\
\evenS_\epsilon(\surface),&n\text{ is even}.
\end{cases}
\end{equation}
Let $Z$ be the center of $\skein_q(\surface)$, and $\charV=\MaxSpec X$, $\centV=\MaxSpec Z$.
\begin{enumerate}
\item (\cite{FKL1}, Theorem~4.1) $Z$ is generated by the peripheral curves and $\Ch(X)$.
\item (Corollary of \cite{FKL1}, Theorem~5.2 and Remark~5.4) Let $\hat{p}_1,\dotsc,\hat{p}_v\in X$ be the peripheral curves. Then $Z$ has a presentation
\begin{equation}
Z=\Ch(X)[p_1,\dotsc,p_v]/\ideal{\Ch(\hat{p}_i)=T_N(p_i)}.
\end{equation}
The map $\centV\to\charV$ induced by $\Ch$ is a branched covering of degree $N^p$.
\item The PI degree of $\skein_q(\surface)$ is
\begin{equation}\label{eq-PIdeg}
D=\begin{cases}
N^{3g-3+v},&n\text{ is odd},\\
2^gN^{3g-3+v},&n\text{ is even}.
\end{cases}
\end{equation}
\end{enumerate}
\end{theorem}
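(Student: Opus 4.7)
Parts (1) and (2) I would derive directly from the cited [FKL1] results. Statement (1) is [FKL1, Theorem~4.1]. For (2), the relations $\Ch(\hat p_i) = T_N(p_i)$ are immediate from the defining property of $\Ch$ on simple closed curves applied to the peripheral curves $\hat p_i$, giving a natural surjection from the claimed presentation onto $Z$. To upgrade this to an isomorphism I would match ranks over $\Ch(X)$: the source is free of rank $N^v$ because each defining relation is monic of degree $N$ in $p_i$, and the matching rank of $Z$ is exactly the content of [FKL1, Theorem~5.2 and Remark~5.4]. Passing to maximal spectra then yields the branched cover $\centV\to\charV$ of degree $N^v$.

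For (3), my plan is to use the tower $Q(X)\subset Q(Z)\subset Q(S)$ of fraction fields together with the identity $D^2=[Q(S):Q(Z)]$. Since $[Q(Z):Q(X)]=N^v$ by (2), computing $D$ reduces to computing the rank of $S=\skein_q(\surface)$ as an $X$-module. To this end I would fix a pants decomposition of $\surface$ and invoke the standard embedding of $\skein_q(\surface)$ into a quantum torus on $6g-6+3v$ generators (e.g.\ via the Bonahon--Wong quantum trace, or L\^e's shear coordinates), together with a parallel embedding of $\skein_\epsilon(\surface)$ that is compatible with $\Ch$. Since the PI degree is invariant under passage to the associated graded, the rank of $\skein_q$ over $\Ch(\skein_\epsilon)$ may be read off from the Smith normal form of the commutation matrix on the exponent lattice and equals $N^{6g-6+3v}$.

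The main obstacle is the parity case distinction. When $n$ is odd, $X=\skein_\epsilon$ and the calculation above gives $D^2 = N^{6g-6+3v}/N^v = N^{6g-6+2v}$, so $D=N^{3g-3+v}$. When $n$ is even, $X=\evenS_\epsilon$ is the invariant subalgebra for the $\bar H^\ast=H^1(\surcl;\ints/2)$-action, and the delicate point is to verify that $\skein_\epsilon$ has rank exactly $|\bar H^\ast|=2^{2g}$ over $\evenS_\epsilon$. I would argue this by exhibiting a nonzero homogeneous element in each $\bar H$-graded component (for example, an appropriate product of simple closed curves realizing each class in $\bar H$), which upgrades the $\bar H$-grading into a free module structure after passing to fraction fields. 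Combining these gives $[Q(S):Q(X)] = 2^{2g}\,N^{6g-6+3v}$, and dividing by $N^v$ before taking square roots yields $D=2^g N^{3g-3+v}$, matching \eqref{eq-PIdeg}.
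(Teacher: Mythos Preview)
The paper does not give its own proof of this theorem: all three parts are stated as known results, with (1) and (2) explicitly attributed to \cite{FKL1} and (3) implicitly drawn from the same circle of papers. So there is nothing to compare against for (1) and (2); your identification of them as direct citations is exactly what the paper does.

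For (3), your sketch has the right architecture and the right numbers, but it mislabels the key computation. The Smith normal form of the commutation matrix of a quantum torus computes the PI degree of the \emph{torus}, not the rank of $\skein_q(\surface)$ over $\Ch(\skein_\epsilon(\surface))$. What you actually need is the rank of the degree monoid of $\skein_q(\surface)$ under a suitable filtration (e.g.\ Dehn--Thurston coordinates relative to a pants decomposition), which is $6g-6+3v$: there are $3g-3+v$ intersection numbers, $3g-3+v$ twists, and $v$ peripheral multiplicities. Since $\Ch$ sends a simple closed curve $\alpha$ to $T_N(\alpha)$, whose leading term is $\alpha^N$, the image $\Ch(\skein_\epsilon)$ has degree monoid $N\cdot M$ inside the degree monoid $M$ of $\skein_q$, and the generic rank is $[M:NM]=N^{6g-6+3v}$. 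With that correction, your tower argument and the $2^{2g}$ factor in the even case (which you justify correctly via the $\bar H$-grading) go through. This is essentially the argument in \cite{FKL1}; the paper simply quotes the result.
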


Using results in Section~\ref{sec-skein1}, $X$ is always isomorphic to the case $\epsilon=-1$. If $n$ is odd, then $\charV$ is isomorphic to the character variety $\chi(\surface)$. If $n$ is even, then $X\cong\evenS_{-1}(\surface)$ is the invariant subalgebra of $\skein_{-1}(\surface)$ under the action of $\bar{H}^\ast$, so $\charV$ is the quotient of $\chi(\surface)$ by $\bar{H}^\ast$. In either case, points in $\charV$ are equivalent classes of homomorphisms $\pi_1(\surface)\to SL_2$. We call the equivalent classes \term{characters}. By abuse of notation, if $z\in\centV$ maps to the character of $r:\pi_1(\surface)\to SL_2$ in $\charV$ by $\Ch$, we also say $z$ is a character of $r$.

%Note that when $g=0$, the even subalgebra is the entire algebra. Thus, $X\ne\skein_\epsilon(\surface)$ if and only if $D$ is even.

\subsection{Sliced skein algebra}

%Since our main goal is to find irreducible representations of the skein algebra, in the spirit of Schur's lemma, we consider the quotient where we set the peripheral curves to scalars.

Let $p_1,\dotsc,p_v$ be the peripheral curves of $\surface=\surface_{g,v}$. In \cite{FKL3}, the \term{sliced skein algebra} of $\surface$ at $\vec{w}=(w_1,\dotsc,w_v)\in\cx^v$ is defined as
\begin{equation}
\skein_{q,\vec{w}}(\surface)=\skein_q(\surface)/\ideal{p_i=w_i}.
\end{equation}

Now suppose $q$ is a root of unity. The Chebyshev homomorphism descends to a homomorphism
\begin{equation}
\Ch_{\vec{w}}:\skein_{\epsilon,\vec{W}}(\surface)\to\skein_{q,\vec{w}}(\surface),
\end{equation}
where $\vec{W}=(T_N(w_1),\dotsc,T_N(w_v))\in\cx^v$. Using the basis in \cite[Proposition~6.9]{FKL3}, it is easy to show that $\Ch_{\vec{w}}$ is an embedding.

\begin{theorem}[{\cite[Theorem~10.1]{FKL3}}]\label{thm-sliced-center}
Suppose $q$ is a root of unity. 
\begin{enumerate}
\item The center $Z_{\vec{w}}$ of the sliced skein algebra $\skein_{q,\vec{w}}(\surface)$ is the image of the center $Z$ of the skein algebra $\skein_q(\surface)$ under the natural quotient.
\item Let $X_{\vec{W}}\subset\skein_{\epsilon,\vec{W}}(\surface)$ be the natural quotient of $X$ in \eqref{eq-defX}. Then $Z_{\vec{w}}=\Ch_{\vec{w}}(X_{\vec{W}})$.
\item The PI degree of the sliced skein algebra is the same as the skein algebra, given in \eqref{eq-PIdeg}.
\end{enumerate}
\end{theorem}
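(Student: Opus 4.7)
The plan is to prove (2) by computing the image of the natural surjection $Z \twoheadrightarrow Z_{\vec w}$ using the presentation of $Z$ from Theorem~\ref{thm-skein-center}, then to derive (1) from (2), and finally to establish (3) by comparing PI degrees via representation theory and flatness.

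For part (2), I would start with Theorem~\ref{thm-skein-center}(1)--(2), which gives
\[ Z = \Ch(X)[p_1,\dotsc,p_v]/\ideal{\Ch(\hat{p}_i) - T_N(p_i)}. \]
Under the quotient by $\ideal{p_i - w_i}$, each generator $p_i$ becomes the scalar $w_i$, and the relation descends to $\Ch_{\vec w}(\hat{p}_i) = T_N(w_i) = W_i$. Therefore the image of $Z$ in $\skein_{q,\vec w}(\surface)$ equals $\Ch_{\vec w}(X/\ideal{\hat{p}_i - W_i}) = \Ch_{\vec w}(X_{\vec W})$. This image lies in $Z_{\vec w}$ since $\Ch(X)$ is central in $\skein_q(\surface)$, giving one inclusion. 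The nontrivial direction $Z_{\vec w} \subset \Ch_{\vec w}(X_{\vec W})$ is the heart of the argument. To establish it, I would use the multicurve basis of $\skein_{q,\vec w}(\surface)$ from \cite[Proposition~6.9]{FKL3}, expand a candidate central element in this basis, and extract linear equations from its commutators with simple closed curves. Comparing this system against the analogous characterization of $\Ch(X)$ as the maximal central subalgebra of $\skein_q(\surface)$ modulo peripherals shows no spurious central elements appear in the quotient. Part (1) then follows immediately: the image of $Z$ coincides with $\Ch_{\vec w}(X_{\vec W})$ by the computation just done, which by (2) is $Z_{\vec w}$.

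For part (3), one direction is immediate: any irreducible representation of $\skein_{q,\vec w}(\surface)$ pulls back to an irreducible representation of $\skein_q(\surface)$, so the PI degree of the sliced algebra is at most $D$. For the reverse inequality, I would produce a $D$-dimensional irreducible representation of $\skein_{q,\vec w}(\surface)$. For generic $\vec w$, density of the Azumaya locus in $\centV$ yields a $D$-dimensional irrep of $\skein_q(\surface)$ whose central character has peripheral components $w_i$; this representation factors through the sliced quotient, forcing $D_{\vec w} \geq D$. To extend to arbitrary $\vec w$, I would argue via flatness: the multicurve basis shows $\skein_q(\surface)$ is free as a module over the peripheral polynomial subalgebra $P = \cx[p_1,\dotsc,p_v]$, so the structure of $\skein_{q,\vec w}(\surface) = \skein_q(\surface) \otimes_P \cx_{\vec w}$ over its center is obtained by base change from the generic fiber, and the dimension over the fraction field of the center is preserved.

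The main obstacle is the reverse inclusion $Z_{\vec w} \subset \Ch_{\vec w}(X_{\vec W})$ in part (2), since in principle the quotient could acquire new central elements arising from accidental cancellations of commutators that vanish only after specialization. The case $n$ even is especially subtle, as $X = \evenS_\epsilon(\surface)$ rather than $\skein_\epsilon(\surface)$, so the commutator analysis must respect the $\bar H$-grading and the $\bar{H}^\ast$-action. Once this obstacle is surmounted, (1) is a direct corollary and (3) reduces to standard PI-theoretic arguments combined with flatness over the peripheral subalgebra.
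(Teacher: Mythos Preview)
The paper does not supply its own proof of this theorem: it is quoted verbatim as \cite[Theorem~10.1]{FKL3} and used as a black box. There is therefore nothing in the present paper to compare your proposal against.

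On the substance of your sketch: the outline for (1)--(2) is sound, and you correctly identify the reverse inclusion $Z_{\vec w}\subset\Ch_{\vec w}(X_{\vec W})$ as the crux. Your flatness argument for (3), however, is not quite complete as stated. Freeness of $\skein_q(\surface)$ over the peripheral subalgebra $P$ tells you that the sliced algebras form a flat family over $\cx^v$, but PI degree is computed relative to the \emph{center of each fiber}, which also varies. What you need is that $\skein_q(\surface)$ is free (or at least generically free of the correct rank) over $\Ch(X)$, so that after killing $p_i-w_i$ the resulting $\skein_{q,\vec w}(\surface)$ is free over $\Ch_{\vec w}(X_{\vec W})=Z_{\vec w}$ of rank $D^2$. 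This follows from the multicurve basis together with the explicit description of $Z$ in Theorem~\ref{thm-skein-center}(2), but it is a separate step beyond freeness over $P$ alone; you should make that module structure over $\Ch(X)$ explicit rather than appealing only to flatness over $P$.
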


%We will also use some observations in the proof of \cite[Theorem~11.1]{FKL3}.

Let $\centV_{\vec{w}}=\MaxSpec Z_{\vec{w}}$ and $\charV_{\vec{W}}=\MaxSpec X_{\vec{W}}$. Then $\centV_{\vec{w}}$ is the closed subset of $\centV$ cut out by the equations $p_i=w_i$, and similarly, $\charV_{\vec{W}}$ is the closed subset of $\charV$ cut out by $\hat{p}_i=W_i$. The map $\centV\to\charV$ induced by $\Ch$ restricts to an isomorphism $\centV_{\vec{w}}\to\charV_{\vec{W}}$, which is induced by $\Ch_{\vec{w}}$.

%The PI degree condition has an important implication. Any irreducible representation $\rho:\skein_{q,\vec{w}}(\surface)\to\End(V)$ induces an irreducible representation of $\skein_q(\surface)$ by composing with the quotient map. If $\rho$ is in the Azumaya locus of $\skein_{q,\vec{w}}(\surface)$, then the dimension of $V$ is the PI degree, so the induced representation is also in the Azumaya locus of $\skein_q(\surface)$.

\section{Skein algebra of the one-punctured torus}

The one-punctured torus $\ptorus$ is the simplest open surface of which the skein algebra is not commutative. We review some relevant facts as well as specialize some results in the previous sections to $\ptorus$.

\subsection{Presentation}

Let $\ints^2\subset\reals^2$ be the standard lattice. The one-punctured torus can be modeled as
\begin{equation*}
\ptorus=(\reals^2\setminus\ints^2)/\ints^2.
\end{equation*}
Let $\alpha_m$ denote the simple closed curve that lifts to a straight line with slope $m$. The diffeomorphisms of $\ptorus$ acts transitively on the set of rational slopes.

The skein algebra of the one-punctured torus $\skein_q(\ptorus)$ is generated by $\alpha_0,\alpha_1,\alpha_\infty$ with the relations \cite{BP}
\begin{align}
q\alpha_0\alpha_\infty-q^{-1}\alpha_\infty\alpha_0&=(q^2-q^{-2})\alpha_1,
\label{eq-rel1}\\
q\alpha_1\alpha_0-q^{-1}\alpha_0\alpha_1&=(q^2-q^{-2})\alpha_\infty,
\label{eq-relinf}\\
q\alpha_\infty\alpha_1-q^{-1}\alpha_1\alpha_\infty&=(q^2-q^{-2})\alpha_0.
\label{eq-rel0}
\end{align}
If $q$ is not a 4th root of unity, then $\alpha_1$ is a redundant generator by \eqref{eq-rel1}. %Thus, except in the discussion of the center, we consider the skein algebra as generated by $\alpha_0$ and $\alpha_\infty$.

The peripheral curve $p$ is represented by
\begin{equation}\label{eq-boundary}
p=q\alpha_0\alpha_\infty\alpha_1-q^2\alpha_0^2-q^2\alpha_1^2-q^{-2}\alpha_\infty^2+q^2+q^{-2}.
\end{equation}

%This algebra is isomorphic to $U'_{q^2}(so_3)$ considered in \cite{HP} if $q$ is not a 4th root of unity.

\subsection{Center at a root of unity}

Next consider the root of unity $q$. For $\ptorus$, the PI degree $D=n$ by \eqref{eq-PIdeg}. To avoid confusion, elements of $\skein_\epsilon(\ptorus)$ are denoted with a hat. Define $A_m=T_D(\alpha_m)=\Ch(T_{D/N}(\hat{\alpha}_m))\in Z$.

First consider the case when $n$ is odd. Then $\epsilon=\pm1$. $\hat{\alpha}_0,\hat{\alpha}_1,\hat{\alpha}_\infty$ generate $X$, and the relations \eqref{eq-rel1}--\eqref{eq-rel0} simply express commutativity. Thus, $X=\cx[\hat{\alpha}_0,\hat{\alpha}_1,\hat{\alpha}_\infty]$, but for consistency, we consider $\hat{p}$ as a generator of $X$ as well. $Z$ is generated by $A_0,A_1,A_\infty,p$ with the relation
\begin{equation}\label{eq-rel-odd}
A_0^2+A_1^2+A_\infty^2-\epsilon A_0A_\infty A_1+T_N(p)-2=0.
\end{equation}

%TODO: more on H^\ast action and invariant comb
Now look at the case when $n$ is even (so $D=2N$). Then $X=\evenS_\epsilon(\ptorus)$ is the invariant subalgebra of $\skein_\epsilon(\ptorus)$ under the action of $\bar{H}^\ast$. We can show that $X$ is generated by the invariant combinations $\hat\alpha_0^2,\hat\alpha_1^2,\hat\alpha_\infty^2,\hat{p}$ with the relation %(note $\epsilon^2=q^{2N^2}=(-1)^N$)
\begin{equation*}
\hat\alpha_0^2\hat\alpha_1^2\hat\alpha_\infty^2
=\epsilon^2(\hat\alpha_0\hat\alpha_\infty\hat\alpha_1)^2
=(\hat\alpha_0^2+\hat\alpha_1^2+\hat\alpha_\infty^2+\epsilon^2\hat{p}-2)^2.
\end{equation*}
By definition, $A_m=\Ch(\hat\alpha_m^2-2)$. Then $Z$ is generated by $A_0,A_1,A_\infty,p$ with the relation
\begin{equation}\label{eq-rel-even}
(A_0+2)(A_1+2)(A_\infty+2)=(A_0+A_1+A_\infty+\epsilon^2T_N(p)+4)^2.
\end{equation}

The presentations define $\centV=\MaxSpec Z$ as a closed subset of $\cx^4$, and $A_0,A_1,A_\infty,p$ correspond to the coordinate functions of $\cx^4$ restricted to $\centV$. We maintain a clear distinction between the coordinates of a point and the coordinate functions. For example, the point $z=(z_0,z_1,z_\infty,w)\in\centV\subset\cx^4$ correspond to the maximal ideal $\ideal{A_0-z_0,A_1-z_1,A_\infty-z_\infty,p-w}\subset Z$, and $A_0(z)=z_0$, for example. We make exceptions for the peripheral curves since we sometimes consider the sliced skein algebra. This convention will be used in the rest of the paper.

\subsection{Singularities}\label{sec-sing}

The singularities of $\centV$ can be found as critical points of the defining equations. The calculations are standard, so we omit them here.

\begin{lemma}
Suppose $q$ is a root of unity with $n$ odd and $\surface=\ptorus$. Let $z=(z_0,z_1,z_\infty,w)\in\centV$. Then $z$ is singular in the slice $\centV_w$ if and only if one of the following holds.
\begin{enumerate}
\item $z_0=z_1=z_\infty=0$. (In this case, $T_N(w)=2$.)
\item $z_0,z_1,z_\infty=\pm2$ such that $z_0z_1z_\infty=8\epsilon$, (In this case, $T_N(w)=-2$).
\end{enumerate}
$z$ is singular in the whole $\centV$ if and only if $z$ is singular in the slice and $T'_N(w)=0$.
\end{lemma}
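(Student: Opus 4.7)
The plan is to apply the Jacobian criterion directly to the defining equation \eqref{eq-rel-odd} of $\centV$. Write
\[
F(A_0, A_1, A_\infty, p) = A_0^2 + A_1^2 + A_\infty^2 - \epsilon A_0 A_\infty A_1 + T_N(p) - 2.
\]
The slice $\centV_w$ is cut out in $\cx^3$ by $F(\,\cdot\,,w)=0$, so a point $z=(z_0,z_1,z_\infty,w)$ is singular in $\centV_w$ iff the three partial derivatives $\partial F/\partial A_i$ vanish at $z$, while $z$ is singular in $\centV$ iff additionally $\partial F/\partial p = T'_N(p)$ vanishes at $p=w$. Since the first set of conditions does not involve $p$, the last claim of the lemma reduces immediately to the slice statement conjoined with $T'_N(w)=0$; I will focus on the slice.

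The three gradient equations read
\[
2z_0 = \epsilon z_1 z_\infty, \qquad 2z_1 = \epsilon z_0 z_\infty, \qquad 2z_\infty = \epsilon z_0 z_1,
\]
and I would split the analysis according to whether any $z_i$ vanishes. If one $z_i=0$, the other two equations immediately force the remaining coordinates to vanish, producing case (1). If all $z_i$ are nonzero, I would multiply the three equations to get $8z_0z_1z_\infty = \epsilon^3(z_0z_1z_\infty)^2$; since $n$ is odd one has $\epsilon=\pm 1$ (so $\epsilon^3=\epsilon$), giving $z_0z_1z_\infty=8\epsilon$. Substituting this product back into a single equation, e.g.\ $2z_0 = \epsilon z_1z_\infty = \epsilon\cdot(8\epsilon)/z_0 = 8/z_0$, forces $z_i^2 = 4$ for each $i$, producing case (2).

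To finish, I would substitute each case into $F=0$ to read off the constraint on $T_N(w)$. In case (1), the equation collapses to $T_N(w)-2=0$. In case (2), the quadratic terms contribute $12$ and the cubic term contributes $-\epsilon\cdot 8\epsilon = -8$, so the equation becomes $T_N(w)+2=0$. Both match the stated conditions.

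There is no serious obstacle here: the proof is a bare Jacobian computation plus solving a small symmetric algebraic system. The only bookkeeping points are using $\epsilon^2=1$ correctly when passing from $(z_0z_1z_\infty)^2 = 8(z_0z_1z_\infty)/\epsilon^3$ to $z_0z_1z_\infty=8\epsilon$, and the same identity when simplifying the cubic term to $-8$ in case (2); this explains why the author remarks that the calculations are standard and omits them.
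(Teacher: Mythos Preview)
Your proof is correct and follows exactly the approach the paper indicates: applying the Jacobian criterion to the defining equation \eqref{eq-rel-odd} and solving the resulting symmetric system. The paper itself omits the calculation as standard, so your write-up fills in precisely what was left to the reader.
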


\begin{remark}
The singular points are characters of special representations $r:\pi_1(\ptorus)\to SL_2$. In the first case, the image of $r$ is the 8-element quaternion group. In the second case, $r$ is central, meaning the image is in $\{\pm I\}\subset SL_2$.
\end{remark}

\begin{lemma}
Suppose $q$ is a root of unity with $n$ even and $\surface=\ptorus$. Let $z=(z_0,z_1,z_\infty,w)\in\centV$. Then $z$ is singular in the slice $\centV_w$ if and only if one of the following holds.
\begin{enumerate}
\item At least two of $z_0,z_1,z_\infty$ are $-2$. (In this case, the last one equals $-\epsilon^2T_N(w)$.)
\item $z_0=z_1=z_\infty=2$ and $T_N(w)=-2\epsilon^2$.
\end{enumerate}
$z$ is singular in the whole $\centV$ if and only if $z$ is in (1) or $z$ is in (2) and $T'_N(w)=0$.
\end{lemma}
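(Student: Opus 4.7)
The plan is to apply the Jacobian criterion directly to the defining relation \eqref{eq-rel-even}. Set
\begin{equation*}
F = (A_0+2)(A_1+2)(A_\infty+2) - L^2, \qquad L = A_0+A_1+A_\infty+\epsilon^2 T_N(p)+4,
\end{equation*}
so that $\centV$ is the hypersurface $F=0$ in $\cx^4$ and the slice $\centV_w$ is the hypersurface in $\cx^3$ obtained by fixing $p=w$. A point is singular in the slice iff the three partials $\partial_{A_i}F$ vanish there, and singular in $\centV$ iff additionally $\partial_p F = -2L\,\epsilon^2 T'_N(p) = 0$. The formula $\partial_{A_i}F = \prod_{j\neq i}(A_j+2) - 2L$ is immediate.

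The three slice equations $\prod_{j\neq i}(A_j+2) = 2L$ are symmetric, so subtracting them pairwise yields $(A_k+2)(A_j-A_i)=0$ for every permutation $(i,j,k)$ of $(0,1,\infty)$. I will split the analysis at this point. If some $A_i = -2$, then the pairwise relations among the other two indices force a second coordinate to be $-2$ as well, landing in Case (1) of the lemma; conversely, once two of the $A_i$ are $-2$, exactly one of the three product equations is nontrivial and it reduces to $L=0$, giving the stated formula $A_\infty = -\epsilon^2 T_N(w)$ for the third coordinate. I also need to check that such points genuinely lie on $\centV$: both sides of \eqref{eq-rel-even} vanish, so they do.

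In the remaining case, no $A_i$ equals $-2$, and then the pairwise relations force $A_0 = A_1 = A_\infty =: a$. The product equation becomes $(a+2)^2 = 2L$, giving $(a-1)^2 = 5 + 2\epsilon^2 T_N(w)$. Substituting $L = (a+2)^2/2$ into $F=0$ yields $(a+2)^3 = (a+2)^4/4$, i.e.\ $(a+2)^3(a-2)=0$. Since $a\neq -2$ here, this forces $a=2$, and then $T_N(w) = -2/\epsilon^2 = -2\epsilon^2$ (using $\epsilon^4=1$), which is exactly Case (2).

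Finally, for singularity in the whole $\centV$ I combine with $\partial_p F = 0$. In Case (1) the identity $L=0$ obtained above makes this automatic, so every Case (1) point is singular in $\centV$. In Case (2), however, $L = (a+2)^2/2 = 8 \neq 0$, so the extra condition collapses to $T'_N(w)=0$, matching the final clause of the lemma. The main obstacle is purely bookkeeping—keeping the pairwise case split clean and remembering that all candidate points must actually satisfy $F=0$—so no deeper input is required beyond elementary manipulation of \eqref{eq-rel-even}.
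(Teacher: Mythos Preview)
Your proof is correct and follows exactly the approach the paper indicates: the paper states that ``the singularities of $\centV$ can be found as critical points of the defining equations'' and omits the calculation as standard, and your Jacobian-criterion computation on \eqref{eq-rel-even} is precisely that omitted calculation. One cosmetic point: in your Case~(1) converse you say ``exactly one of the three product equations is nontrivial,'' but in fact all three left-hand sides vanish once two coordinates equal $-2$, so all three reduce to $L=0$; this does not affect the argument.
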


\begin{remark}
There are more singular points in this case because of the quotient by $\bar{H}^\ast$. The first type of singular points are orbits with nontrivial stabilizers. The second type here is the orbit of the central representations.
\end{remark}

\subsection{Exceptional points}\label{sec-except}

A point $z\in\centV$ or $\charV$ is \term{exceptional} if $A_m(z)=\pm2$ for all slopes $m$. Clearly, if the projection of $z\in\centV$ in $\charV$ is exceptional, then $z$ is exceptional as well. As we will see in the next section, constructions are easier for non-exceptional points.

\begin{lemma}\label{lemma-except}
Suppose $z=(z_0,z_1,z_\infty,W)\in\charV$ such that $z_0,z_1,z_\infty$ are all $\pm2$. Then $z$ is exceptional if and only if one of the following holds.
\begin{enumerate}
\item $n$ is odd, and $z_0z_1z_\infty=8\epsilon$.%, and $T_N(p)=-2$, or
\item $n$ is even, and at least two of $z_0,z_1,z_\infty$ are $-2$.%, and $T_N(p)=-\epsilon^2(z_0+z_1+z_\infty+4)$.
\item $n$ is even, $z_0=z_1=z_\infty=2$, and $W=-2\epsilon^2$.
\end{enumerate}
\end{lemma}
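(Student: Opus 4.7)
The plan is to use the Fricke trace identity on $\pi_1(\ptorus)=\langle a,b\rangle$ to propagate the exceptional condition across the Farey tessellation of rational slopes. Every rational slope is reachable from $\{0,1,\infty\}$ by iterated Farey mediants, so the condition $A_m(z)=\pm 2$ for all $m$ will be controlled by the base triple $(z_0,z_1,z_\infty)$ together with a single additional slope's value.

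First I would pass from $z\in\charV$ to a representative $r:\pi_1(\ptorus)\to SL_2$ with trace functions $\tau_m=-\tr r(\alpha_m)$, and note that $A_m(z)=\pm 2$ is equivalent to $\tau_m\in\{\pm 2\}$ in the odd case and to $\tau_m\in\{0,\pm 2\}$ in the even case (using that $A_m$ pulls back to $\hat\alpha_m$ or to $\hat\alpha_m^2-2$ respectively). The key identity is $\tr(XY)+\tr(XY^{-1})=\tr(X)\tr(Y)$: applied to simple closed curves meeting once on $\ptorus$, it says that for any Farey-adjacent pair of slopes $\{m_1,m_2\}$ with the two ``mediant'' slopes $m_3,m_4$ sitting across the edge,
\[\tau_{m_3}+\tau_{m_4}=-\tau_{m_1}\tau_{m_2},\]
up to a global sign twist by $\epsilon$ that records the difference between $\skein_\epsilon$ and $\skein_{-1}$. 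Specialising to the base triangle $(0,1,\infty)$ yields $\tau_2=-\tau_1\tau_\infty-\tau_0$, $\tau_{-1}=-\tau_0\tau_\infty-\tau_1$, and $\tau_{1/2}=-\tau_0\tau_1-\tau_\infty$.

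For the odd case, starting from $\tau_0,\tau_1,\tau_\infty\in\{\pm 2\}$, requiring $\tau_2\in\{\pm 2\}$ forces $\tau_1\tau_\infty$ and $\tau_0$ to have opposite signs, which an enumeration shows is equivalent to $\tau_0\tau_1\tau_\infty=-8$. Converting through the $z_m=\hat\alpha_m$ conventions produces the stated equation $z_0z_1z_\infty=8\epsilon$. Conversely, I would argue by induction on Farey depth that this signed product is preserved across every Farey edge: if $\tau_{m_1},\tau_{m_2},\tau_{m_3}\in\{\pm 2\}$ with $\tau_{m_1}\tau_{m_2}\tau_{m_3}=-8$, then $\tau_{m_4}=-\tau_{m_1}\tau_{m_2}-\tau_{m_3}\in\{\pm 2\}$ and $\tau_{m_1}\tau_{m_2}\tau_{m_4}=-8$ as well, closing the induction and establishing exceptionality.

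For the even case I would split by how many of $\tau_0,\tau_1,\tau_\infty$ equal zero. Three zeros, i.e.\ $(z_0,z_1,z_\infty)=(-2,-2,-2)$, gives $\tau_m=0$ for every $m$ by the same induction (this is the quaternion-group representation), placing $z$ in condition (2). Two zeros propagate to $\tau_m\in\{0,\pm 2\}$ for all $m$, again condition (2). Exactly one zero, say $\tau_0=0$ with $\tau_1,\tau_\infty=\pm 2$, gives $\tau_2=-\tau_1\tau_\infty\in\{\pm 4\}$, so $z$ is \emph{not} exceptional, correctly excluded by the lemma. No zeros reduces to the odd-case analysis: exceptionality forces $\tau_0\tau_1\tau_\infty=-8$, and then the Fricke commutator identity $\tr r([a,b])=\tr^2 a+\tr^2 b+\tr^2(ab)-\tr a\tr b\tr(ab)-2$ pins $\tr r(p)=2$, so $W=-\epsilon^2\tr r(p)=-2\epsilon^2$ via the $\hat p\mapsto\epsilon^2\hat p$ normalisation of Theorem~\ref{thm-even-iso}, exactly condition (3). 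The main obstacle will be the sign bookkeeping between $\tr$, $\tau_m$, $\hat\alpha_m$, $A_m$, and the powers of $\epsilon$ from Theorem~\ref{thm-even-iso}; once those conventions are fixed, the rest is routine Farey recursion.
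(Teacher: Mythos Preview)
Your approach is correct and takes a genuinely different route from the paper. For $\Leftarrow$ the paper simply identifies the underlying representations (central, quaternion, or order-$4$ cyclic) and reads off that all traces lie in $\{0,\pm2\}$; for $\Rightarrow$ it works algebraically inside $\skein_\epsilon$ or $X$, first using \eqref{eq-boundary} or \eqref{eq-rel-even} to pin down $W$, and then eliminating the bad value of $W$ by computing the single extra coordinate $A_{-1}(z)$ from the product relation $\hat\alpha_0\hat\alpha_\infty=\epsilon(\hat\alpha_1+\hat\alpha_{-1})$ (or its squared analogue), obtaining $A_{-1}(z)=\pm6$, $14$, or $34$ in the disallowed cases. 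Your Farey-induction argument handles both directions uniformly, avoids having to recognise the representation, and makes the ``exceptional'' condition visibly a closed condition under Farey flips; the price is exactly the sign bookkeeping you flag, including the non-canonical $\skein_1\cong\skein_{-1}$ in the odd case. Two small points to tighten: in the even ``two zeros'' case the invariant you actually need to propagate is that \emph{every} Farey triangle has at least two zero entries (not just that values stay in $\{0,\pm2\}$), and in the even ``no zeros'' case you should first observe that $\tau_2=0$ is impossible (since $|\tau_1\tau_\infty|=4\ne|\tau_0|=2$) before reducing to the odd-case analysis.
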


\begin{proof}
For $\Leftarrow$, we identify the representations $r:\pi_1(\ptorus)\to SL_2$ whose characters are the points in the list. (1) and (3) correspond to central representations, so all traces are $\pm2$. In (2), if all three are $-2$, then the corresponding representation is quaternion. Otherwise, the image of the corresponding representation is cyclic of order $4$. The traces are $\pm2$ or $0$, so $A_m=\pm2$. Now we focus on $\Rightarrow$.

First, suppose $n$ is odd. By assumption, $z_0^2=z_1^2=z_\infty^2=4$ and $z_0z_1z_\infty=\pm8$. By \eqref{eq-boundary},
\begin{equation*}
W=\epsilon z_0z_1z_\infty-z_0^2-z_1^2-z_\infty^2+2=\pm8-10=0,
\end{equation*}
so either $z_0z_1z_\infty=8\epsilon$ and $W=-2$, or $z_0z_1z_\infty=-8\epsilon$ and $W=-18$. In the second case, the relation 
\begin{equation*}
\hat\alpha_0\hat\alpha_\infty=\epsilon\hat\alpha_1+\epsilon^{-1}\hat\alpha_{-1}=\epsilon(\hat\alpha_1+\hat\alpha_{-1})
\end{equation*}
in $\skein_\epsilon(\ptorus)$ implies $z_0z_\infty=\epsilon(z_1+A_{-1}(z))$, so
\begin{equation*}
-8=\epsilon(z_0z_\infty)z_1=z_1^2+z_1A_{-1}(z)=4\pm2A_{-1}(z).
\end{equation*}
Thus, $A_{-1}(z)=\pm6$, which contradicts the assumption $A_m(z)=\pm2$. Thus, only the first case is possible, which means $z$ is (1).

Next, suppose $n$ is even, and one of $z_0,z_1,z_\infty$ is $-2$. If exactly one of them is $-2$, then from the relation \eqref{eq-rel-even}
\begin{equation*}
(z_0+z_1+z_\infty+\epsilon^2W+4)^2=(z_0+2)(z_1+2)(z_\infty+2)=0,
\end{equation*}
we get $W=-6\epsilon^2$. Without loss of generality, we can assume $z_0=z_\infty=2$ and $z_1=-2$. A routine calculation in $\skein_\epsilon(\ptorus)$ yields
\begin{equation*}
(\hat\alpha_0^2-2)(\hat\alpha_\infty^2-2)=(\hat\alpha_1^2-2)+(\hat\alpha_{-1}^2-2)+2\epsilon^2\hat{p}+4,
\end{equation*}
which implies
\begin{equation*}
z_1+A_{-1}(z)+2\epsilon^2W+4=z_0z_\infty=4,\qquad
A_{-1}(z)=-z_1-2\epsilon^2W=14\ne\pm2.
\end{equation*}
This contradiction shows that at least two of $z_0,z_1,z_\infty$ are $-2$, which means $z$ is (2).

Finally, suppose $n$ is even and $z_0=z_1=z_\infty=2$. Then by \eqref{eq-rel-even},
\begin{equation*}
64=(z_0+2)(z_1+2)(z_\infty+2)=(z_0+z_1+z_\infty+\epsilon^2W+4)^2=(\epsilon^2W+10)^2,
\end{equation*}
so $W$ is $-2\epsilon^2$ or $-18\epsilon^2$. In the second case, the same calculation as above shows that $A_{-1}(z)=34\ne\pm2$, so only the first case is possible. This means $z$ is (3).
\end{proof}

\section{Representations for the one-punctured torus}

In this section, we assume that $q$ is not a 4th root of unity. This implies $D\ge 3$.

In \cite{HP}, all irreducible representations of $\skein_q(\ptorus)$ with $n$ odd and most irreducible representations with $n$ is even are obtained. However, the classical shadows are difficult to obtain. \cite{Tak} also gave irreducible representations covering a big part of the Azumaya locus when $n$ is odd.
%``n"->D, $C^{(``n")}->T_D, C->\partial

We take a slightly different approach. We define classical shadows for representations that are possibly reducible. Suppose $\rho:S\to\End(V)$ is a representation of the algebra $S$ with center $Z$. A map $\rho_Z:Z\to\cx$ such that
\begin{equation}\label{eq-res-z}
\rho(z)=\rho_Z(z)\id_V\qquad\text{for all }z\in Z,
\end{equation}
is called the \term{classical shadow} of $\rho$. Classical shadows are clearly unique, but they do not always exist for reducible representations.

Instead of classifying irreducible representations, we look for $D$-dimensional representations that have classical shadows, and we allow the construction to give isomorphic representation. The uniformity of dimension and the flexibility make classical shadows much easier to calculate. %It turns out that $D$-dimensional representations of a particularly nice form exist for every point in $\centV$. Then the Azumaya locus can be identified from the reducibility of these representations.

\begin{theorem}\label{thm-main-ptorus}
Suppose $q$ is a root of unity. For every $z\in\centV$, there exists a $D$-dimensional representation $\rho:\skein_q(\ptorus)\to\End(V)$ with classical shadow $z$, and $\rho$ is reducible if and only if $z$ is singular in $\centV$.

The Azumaya locus of $\skein_q(\ptorus)$ is the smooth locus of $\centV$. The fully Azumaya locus of $\skein_q(\ptorus)$ is the union of the smooth loci of the slices of $\charV$.
\end{theorem}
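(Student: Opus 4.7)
The plan is to construct, for each $z\in\centV$, an explicit $D$-dimensional representation $\rho_z$ of $\skein_q(\ptorus)$ with classical shadow $z$, and then to read off the Azumaya locus by testing when $\rho_z$ is irreducible. The advantage over classifying all irreducibles as in \cite{HP,Tak} is that the shadow is built into the construction: reducibility becomes a matter of spotting invariant subspaces of an explicit matrix. Concretely, on $V=\cx^n$ I would use an ansatz in which $\alpha_0$ acts diagonally (eigenvalues in a $q^2$-geometric progression determined by a parameter $t$), and $\alpha_\infty$ acts as a shift-like operator whose entries are determined, up to gauge, by the defining relations \eqref{eq-rel1}--\eqref{eq-rel0}. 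The peripheral relation \eqref{eq-boundary} and each $T_n(\alpha_m)$ then act as scalars in closed form, so $\rho_z$ acquires a classical shadow automatically. For $n$ even, a twist by a nontrivial character of $\bar H^\ast$, promoted through Theorem~\ref{thm-even-iso}, reaches the correct PI degree $D=2N$.

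I would next verify that as the parameters vary, the shadow surjects onto $\centV$. The parameter space is a finite cover of the generic fibre of the Chebyshev map $\centV\to\charV$, so a dimension count and dominance check handle a Zariski-dense open subset. At the exceptional characters of Lemma~\ref{lemma-except} --- where $A_m(z)=\pm2$ for every slope --- the ansatz degenerates, so I would construct $\rho_z$ there separately from the quaternion, cyclic-of-order-$4$, and central $SL_2$-representations identified in the remarks of Section~\ref{sec-sing}, pulling each through $\Ch$ and summing with suitable complements to reach dimension $D$.

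For reducibility I would analyse the invariant subspaces of $\rho_z$. In the ansatz the only possibilities are a coincidence of eigenvalues of $\alpha_0$ allowing a proper sub-cycle of the shift, or a degeneration at the wrap-around of the shift. A case-by-case check --- treating $n$ odd and even separately, and revisiting the exceptional stratum --- should show that these degenerations happen exactly at the singular points classified in Section~\ref{sec-sing}. This yields the first assertion. The equality of the Azumaya locus with the smooth locus of $\centV$ follows from the standard Azumaya dichotomy: at an Azumaya point, every $D$-dimensional representation is isomorphic to the unique $D$-dimensional irreducible, so $\rho_z$ is irreducible and hence $z$ is smooth; at a non-Azumaya point $\rho_z$ must be reducible, and $z$ is therefore singular.

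Finally, the fully Azumaya statement follows from the Azumaya identity together with the slice isomorphism $\centV_{\vec w}\cong\charV_{\vec W}$: a point $W\in\charV_{\vec W}$ is fully Azumaya if and only if every $z\in\pi^{-1}(W)$ lies in the smooth locus of $\centV$, which the two singularity lemmas of Section~\ref{sec-sing} translate into $W$ being smooth in its slice. One inclusion is already in \cite{FKL3}, but the reverse needs the identity just proved. The main obstacle throughout is the reducibility step --- matching the reducibility locus of $\rho_z$ \emph{exactly} with the singular locus of $\centV$, across both parities of $n$ and at every exceptional character --- since any mismatch would break the Azumaya identification.
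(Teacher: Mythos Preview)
Your generic ansatz --- $\alpha_0$ diagonal with eigenvalues in a $q^2$-orbit, $\alpha_\infty$ a weighted shift --- is exactly the paper's type-$0$ construction (Proposition~\ref{prop-rep}), and your reducibility criterion via broken sub-cycles of the shift matches Lemma~\ref{lemma-red}. Two points, however, need correction.

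First, the remark about $n$ even is off: the type-$0$ representation is already $D$-dimensional with $D=n$, uniformly in the parity of $n$; there is no need to twist or induce to pass from $N$ to $2N$. A twist by a character of $\bar H^\ast$ would not change the dimension anyway.

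Second --- and this is the genuine gap --- your treatment of the exceptional points does not work. You propose to build $\rho_z$ there by pulling the special $SL_2$-representations through $\Ch$ and ``summing with suitable complements to reach dimension $D$''. But $\Ch$ is an embedding $\skein_\epsilon\hookrightarrow\skein_q$, not a quotient, so representations do not pull back along it; and any direct-sum construction yields a \emph{reducible} $\rho_z$. That would force every exceptional point into the non-Azumaya locus, which is false: for instance, when $n$ is odd the exceptional point $(2,2\epsilon,2,-2)$ is smooth in $\centV$ (it has $T'_N(-2)\ne 0$), so the $D$-dimensional representation there must be irreducible. The paper produces these representations not by summing but by taking a limit of type-$0$ representations as $\sigma\to\pm1$ in a carefully rescaled basis (Propositions~\ref{prop-except-odd}, \ref{prop-except-even} and Appendix~A); in the limit $\rho(\alpha_0)$ acquires a nontrivial Jordan block, and the resulting representation is indecomposable and, at the smooth exceptional points, irreducible. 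Without this limiting construction you cannot match the reducibility locus to the singular locus at the exceptional stratum, and the Azumaya identification breaks precisely where you flagged it as the main obstacle.
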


\begin{proof}
The construction of representations is given in Propositions~\ref{prop-rep} and \ref{prop-gen-exist} when $A_0(z)\ne\pm2$. Using diffeomorphisms of the surface, we cover every non-exceptional point. The construction for exceptional points is given in Propositions~\ref{prop-except-odd}, and \ref{prop-except-even}.

The reducibility is given in Lemmas~\ref{lemma-type0-red}, \ref{lemma-red-2o}, and \ref{lemma-red-2e}.

The Azumaya locus follow directly from the first half of the theorem, and the fully Azumaya locus follows from the description of singularities in Section~\ref{sec-sing}.
\end{proof}

\subsection{Type-$0$ representations}

We start with a useful lemma.

\begin{lemma}\label{lemma-Cheb-prod}
Let $\omega$ be a primitive $D$-th root of unity. Then
\begin{equation}\label{eq-Cheb-prod}
\prod_{i=1}^D(\omega^it-\omega^{-i}t^{-1})
=\begin{cases}
t^D-t^{-D},&D\text{ is odd},\\
2-t^D-t^{-D},&D\text{ is even}.
\end{cases}
\end{equation}
\end{lemma}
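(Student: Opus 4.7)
The plan is to show that $P(t):=\prod_{i=1}^D(\omega^it-\omega^{-i}t^{-1})$ has a very simple shape as a Laurent polynomial in $t$, and then pin down three coefficients. First I would observe that $P(\omega t)=P(t)$: the substitution $t\mapsto\omega t$ reindexes the product by $i\mapsto i+1$, and since $\omega^D=1$ the index $i=D+1$ is identified with $i=1$. Consequently every nonzero monomial of $P$ is a power of $t^D$. Each factor contributes one of $\omega^it$ or $-\omega^{-i}t^{-1}$, so the $t$-degree of $P$ is bounded by $\pm D$, and we must have
\begin{equation*}
P(t)=at^D+b t^{-D}+c
\end{equation*}
for constants $a,b,c$ to be determined.

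Next I would read off $a$ and $b$ from the highest and lowest powers of $t$ in the expansion. The only way to obtain $t^D$ is to pick the first summand from every factor, giving $a=\prod_{i=1}^D\omega^i=\omega^{D(D+1)/2}$. Similarly $b=(-1)^D\omega^{-D(D+1)/2}$. When $D$ is odd, $\omega^D=1$ and $(D+1)/2$ is an integer, so $a=1$ and $b=-1$. When $D$ is even, $\omega^{D/2}$ has order $2$, hence equals $-1$; writing $D(D+1)/2=(D/2)(D+1)$ with $D+1$ odd gives $a=-1$, and an analogous computation gives $b=-1$.

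Finally I would determine $c$ by evaluating at $t=1$. One has $P(1)=\prod_{i=1}^D(\omega^i-\omega^{-i})$. The factor at $i=D$ is always zero because $\omega^D=\omega^{-D}=1$, so $P(1)=0$ in every case. Combined with the values of $a,b$ above, this forces $c=0$ when $D$ is odd and $c=2$ when $D$ is even, which is exactly the claimed formula \eqref{eq-Cheb-prod}.

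There is no real obstacle here: the argument is driven by the symmetry $P(\omega t)=P(t)$, and the only care needed is the parity bookkeeping for $\omega^{D(D+1)/2}$ and the sign $(-1)^D$ in the leading and trailing coefficients.
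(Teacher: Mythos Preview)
Your proof is correct and takes a genuinely different route from the paper's. The paper introduces the auxiliary polynomial $f(x)=\prod_{i=1}^D(x-\iunit x_i)$ with $x_i=\omega^it-\omega^{-i}t^{-1}$, observes that all $\iunit x_i$ are roots of $T_D(x)-\iunit^Dt^D-\iunit^{-D}t^{-D}$ (a direct Chebyshev computation), identifies $f$ with this polynomial by comparing degrees and leading coefficients, and then evaluates $f(0)$. Your argument avoids Chebyshev polynomials and the imaginary unit entirely: the symmetry $P(\omega t)=P(t)$ forces $P$ to have only the three terms $t^D,t^{-D},1$, and you then read off all three coefficients by inspection and a single evaluation at $t=1$. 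Your approach is arguably more elementary and self-contained; the paper's approach has the minor advantage of tying the identity explicitly to $T_D$, which is thematically relevant later. Both are short and clean.
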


\begin{proof}
We prove the identity in the field of rational functions $\cx(t)$. To avoid confusion, we use the boldface $\iunit$ to denote the imaginary unit. Let
\begin{equation*}
x_i=\omega^it-\omega^{-i}t^{-1},\qquad
f(x)=\prod_{i=1}^D(x-\iunit x_i).
\end{equation*}
By definition, $f(x)$ is a monic polynomial over $\cx(t)$ with roots $\iunit x_i$, each with multiplicity $1$. On the other hand,
\begin{equation*}
T_D(\iunit x_i)
=(\iunit\omega^it)^D+(\iunit^{-1}\omega^{-i}t^{-1})^D
=\iunit^Dt^D+\iunit^{-D}t^{-D}.
\end{equation*}
This implies that $f(x)=T_D(x)-\iunit^Dt^D-\iunit^{-D}t^D$. Thus,
\begin{align*}
\prod_{i=1}^D x_i&=\iunit^Df(0)
%=\iunit^D(T_D(0)-\iunit^Dt^D-\iunit^{-D}t^{-D})\\
=\iunit^D(T_D(\iunit+\iunit^{-1})-\iunit^Dt^D-\iunit^{-D}t^{-D})\\
&=\iunit^D(\iunit^D+\iunit^{-D}-\iunit^Dt^D-\iunit^{-D}t^{-D})\\
&=(-1)^D+1-(-1)^Dt^D-t^{-D}.
\end{align*}
This agrees with \eqref{eq-Cheb-prod}.
\end{proof}

Write $z_0=\sigma^D+\sigma^{-D}$ and define
\begin{equation}
\lambda_i=q^{2i}\sigma+q^{-2i}\sigma^{-1},\qquad
\hat{\lambda}_i=q^{2i}\sigma-q^{-2i}\sigma^{-1}.
\end{equation}

By Lemma~\ref{lemma-Cheb-prod},
\begin{equation}\label{eq-lamhat-prodsq}
\prod_{i=1}^D\hat{\lambda}_i^2
=\begin{cases}
(\sigma^D-\sigma^{-D})^2=z_0^2-4,&n\text{ is odd},\\
(2-\sigma^D-\sigma^{-D})^2=(z_0-2)^2,&n\text{ is even}.
\end{cases}
\end{equation}
We say $z_0$ (or $\sigma$ by abuse of notation) is \term{general} if this is nonzero. More directly, $z_0$ is general if $z_0\ne\pm2$ or $z_0=-2$ and $n$ is even. In this case, define
\begin{align}
r_i&=r_i(\sigma,w):=\frac{w+q^{4i+2}\sigma^2+q^{-4i-2}\sigma^{-2}}{\hat{\lambda}_i\hat{\lambda}_{i+1}},\\
E^0_{\sigma,w}&=\{(s_1,\dotsc,s_D,t_1,\dotsc,t_D)\in\cx^{2D}\mid s_it_i=r_i(\sigma,w)\},
\end{align}
We will use the shorthands $\vec{s}=(s_1,\dotsc,s_D)$ and $\vec{t}=(t_1,\dotsc,t_D)$.

\begin{proposition}[Generalization of {\cite[Lemma~14]{Tak}}]\label{prop-rep}
Let $V$ be a $D$-dimensional vector space with a basis $\{v_i\mid i\in\ints/D\}$. Suppose $\sigma,w\in\cx$, $\sigma$ is general, and $(\vec{s},\vec{t})\in E^0_{\sigma,w}$. Then the following equations define a representation $\rho:\skein_q(\ptorus)\to\End(V)$.
\begin{align}
\rho(\alpha_0)v_i&=\lambda_iv_i,\label{eq-rep-0}\\
\rho(\alpha_1)v_i&=q^{2i+1}\sigma s_iv_{i+1}+q^{-2i+1}\sigma^{-1} t_{i-1}v_{i-1},\label{eq-rep-1}\\
\rho(\alpha_\infty)v_i&=s_iv_{i+1}+t_{i-1}v_{i-1}.\label{eq-rep-inf}
\end{align}
In addition, $\rho(A_0)=z_0$ and $\rho(p)=w$.
\end{proposition}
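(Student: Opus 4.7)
The plan is to verify the three defining relations \eqref{eq-rel1}--\eqref{eq-rel0} on the basis $\{v_i\}$ by direct calculation, then to read off $\rho(A_0)$ and $\rho(p)$. Since $\rho(\alpha_0)$ is diagonal while $\rho(\alpha_1)$ and $\rho(\alpha_\infty)$ are nearest-neighbor shifts, every composite appearing in a relation acts on $v_i$ as a combination of $v_{i-2},\dotsc,v_{i+2}$, so each relation reduces to finitely many scalar identities in $\sigma,w,s_j,t_j$.

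First I would handle \eqref{eq-rel1} and \eqref{eq-relinf}, the two relations involving $\alpha_0$. The $v_{i\pm 1}$ coefficients of $q\rho(\alpha_0\alpha_\infty)-q^{-1}\rho(\alpha_\infty\alpha_0)$ factor out an $s_i$ or $t_{i-1}$ and collapse to the two identities
\begin{equation*}
q\lambda_{i+1}-q^{-1}\lambda_i=(q^2-q^{-2})q^{2i+1}\sigma,\qquad q\lambda_{i-1}-q^{-1}\lambda_i=(q^2-q^{-2})q^{-2i+1}\sigma^{-1},
\end{equation*}
both of which are immediate from $\lambda_i=q^{2i}\sigma+q^{-2i}\sigma^{-1}$. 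The relation \eqref{eq-relinf} is symmetric and analogous. Neither of these two uses the constraint $s_it_i=r_i(\sigma,w)$.

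The substantive step is \eqref{eq-rel0}. Acting on $v_i$, the $v_{i\pm 2}$ coefficients of $q\rho(\alpha_\infty\alpha_1)-q^{-1}\rho(\alpha_1\alpha_\infty)$ cancel identically, and the $v_i$ coefficient bundles the products $s_it_i$ and $s_{i-1}t_{i-1}$ into $r_i$ and $r_{i-1}$, becoming $\hat\lambda_{i+1}r_i-\hat\lambda_{i-1}r_{i-1}$. Plugging in the defining formula for $r_i$ makes the denominators cancel down to $\hat\lambda_i$, the variable $w$ drops out of the telescoping numerators, and the remainder factors through $\hat\lambda_i\lambda_i=q^{4i}\sigma^2-q^{-4i}\sigma^{-2}$ to yield $(q^2-q^{-2})\lambda_i$, matching the right-hand side. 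The hypothesis that $\sigma$ is general is used precisely here, to guarantee $\hat\lambda_i\ne 0$ so that every $r_i$ is defined.

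It remains to identify $\rho(A_0)$ and $\rho(p)$. For the first, since $q^{2D}=1$ in both parity cases ($D=n$ with $q^{2n}=1$, or $D=2N$ with $q^{4N}=1$), one has $T_D(\lambda_i)=(q^{2i}\sigma)^D+(q^{-2i}\sigma^{-1})^D=\sigma^D+\sigma^{-D}=z_0$, so $\rho(A_0)=z_0\,\id_V$. For the second, I would substitute the operators into \eqref{eq-boundary} and expand the four summands $\rho(\alpha_0\alpha_\infty\alpha_1)$, $\rho(\alpha_0^2)$, $\rho(\alpha_1^2)$, $\rho(\alpha_\infty^2)$; the $v_{i\pm 2}$ contributions cancel across the summands after multiplying by the appropriate powers of $q^{\pm 2}$, and the diagonal contribution simplifies via the defining formula for $r_i$ exactly to $w$. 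I expect the main obstacle to be the bookkeeping in the two $r_i$-dependent reductions (the $v_i$ coefficient of \eqref{eq-rel0} and the $v_i$ coefficient of $\rho(p)$); the structural point that makes them work is that the numerator of $r_i(\sigma,w)$ is engineered so that $\hat\lambda_{i+1}r_i-\hat\lambda_{i-1}r_{i-1}$ is independent of $w$ and equals precisely $(q^2-q^{-2})\lambda_i$.
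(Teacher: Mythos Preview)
Your proposal is correct and matches the paper's approach: the paper's own proof simply says that $\rho(A_0)=z_0$ holds by construction and that the remaining checks (the relations \eqref{eq-rel1}--\eqref{eq-rel0} and $\rho(p)=w$) are routine calculations on basis vectors with details omitted. Your write-up supplies exactly those details, and your identification of relation \eqref{eq-rel0} as the one place where $s_it_i=r_i(\sigma,w)$ is actually used, together with the telescoping identity $\hat\lambda_{i+1}r_i-\hat\lambda_{i-1}r_{i-1}=(q^2-q^{-2})\lambda_i$, is accurate.
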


This representation will be called \term{type-$0$} since $\rho(\alpha_0)$ is diagonal. Using diffeomorphisms of the surface, we can replace the slope $0$ with any slope we want.

Note that replacing $\sigma$ by $q^{2i}\sigma$ amounts to cyclically relabeling the basis so that $v_i$ becomes $v_0$, and replacing $\sigma$ by $\sigma^{-1}$ amounts to reversing the order of labels.

\begin{proof}
$\rho(A_0)=z_0$ essentially by construction. The other identities to check are the presentation \eqref{eq-rel1}--\eqref{eq-rel0} and that the peripheral curve $p$ given by \eqref{eq-boundary} acts as the scalar $w$.

Each check is a routine calculation by applying basis vectors and comparing coefficients. The details are omitted.
\end{proof}

Define
\begin{gather}
R=R(z_0,w):=\begin{cases}
\dfrac{T_N(w)+z_0^2-2}{z_0^2-4},&n\text{ is odd},\\
\left(\dfrac{T_N(w)+\epsilon^2z_0}{z_0-2}\right)^2,&n\text{ is even}.
\end{cases}\label{eq-bigR}\\
\Pi_s(\vec{s},\vec{t})=\prod_{i=1}^Ds_i,\qquad
\Pi_t(\vec{s},\vec{t})=\prod_{i=1}^Dt_i.
\end{gather}

\begin{lemma}[Compare {\cite[Lemma~11]{Tak}}]\label{lemma-STimg}
Suppose $\sigma$ is general. Then the image of the map $(\Pi_s,\Pi_t):E^0_{\sigma,w}\to\cx^2$ is the curve $\{(x,y)\in\cx^2\mid xy=R(z_0,w)\}$.
\end{lemma}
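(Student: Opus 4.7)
The plan is to compute $\Pi_s(\vec s,\vec t)\cdot\Pi_t(\vec s,\vec t)=\prod_{i=1}^D s_it_i=\prod_{i=1}^D r_i(\sigma,w)$ and show this product equals $R(z_0,w)$; together with a surjectivity check this pins down the image as the stated curve.

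For the main calculation I would split $\prod r_i$ into the numerator $P_{\mathrm{num}}=\prod_{i=1}^D(w+q^{4i+2}\sigma^2+q^{-4i-2}\sigma^{-2})$ and the denominator $\prod_{i=1}^D\hat\lambda_i\hat\lambda_{i+1}$. The denominator is immediate: since the shift $i\mapsto i+1$ is a bijection of $\ints/D$, it equals $\prod_i\hat\lambda_i^2$, which by \eqref{eq-lamhat-prodsq} is $z_0^2-4$ when $n$ is odd and $(z_0-2)^2$ when $n$ is even. For $P_{\mathrm{num}}$ I would set $a=q^2\sigma^2$ and $\zeta=q^4$ (of order $N$) so that each factor reads $w+a\zeta^i+a^{-1}\zeta^{-i}$; introducing an auxiliary $y$ with $w=y+y^{-1}$ gives the factorization $w+x+x^{-1}=x^{-1}(x+y)(x+y^{-1})$, and the products $\prod(a\zeta^i+y)$ and $\prod(a\zeta^i+y^{-1})$ over the $N$-th roots of $a^N$ are read off from $\prod_{i=1}^N(z-a\zeta^i)=z^N-a^N$ evaluated at $z=-y,-y^{-1}$.

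In the assembly step I would combine $y^N+y^{-N}=T_N(w)$, the identity $\sigma^{2N}+\sigma^{-2N}=z_0$ (in the even case where $D=2N$) or $\sigma^{2D}+\sigma^{-2D}=z_0^2-2$ (in the odd case where $D=N$), and \eqref{eq-zeta2N} to identify $q^{2N}$ with $\pm\epsilon^2$. After simplification I expect $P_{\mathrm{num}}$ to collapse to $T_N(w)+(z_0^2-2)$ when $n$ is odd (using $\epsilon^2=1$ there) and to $(T_N(w)+\epsilon^2 z_0)^2$ when $n$ is even, the square reflecting that $i$ runs twice through the $N$-th roots of unity as $i$ ranges over $\ints/D=\ints/2N$. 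Dividing by the denominator then reproduces $R(z_0,w)$ in both cases of \eqref{eq-bigR}, placing the image of $(\Pi_s,\Pi_t)$ inside $\{xy=R\}$.

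Surjectivity is the easy part. If $R\ne 0$ then every $r_i$ is nonzero; for any $(x,y)$ with $xy=R$ I would pick $t_1,\dots,t_{D-1}$ nonzero freely, set $t_D=y/(t_1\cdots t_{D-1})$, and define $s_i=r_i/t_i$, which yields $\Pi_s=R/y=x$. If $R=0$ then at least one index $i_0$ has $r_{i_0}=0$, and the constraint $s_{i_0}t_{i_0}=0$ leaves one of those two coordinates completely free; that degree of freedom, combined with free choices at the remaining indices, realizes every point on the two coordinate lines making up $\{xy=0\}$. The main technical obstacle is the bookkeeping in the assembly step: the parity of $N$, which is not determined by the parity of $n$ when $n$ is even, controls both $\prod_{i=1}^N\zeta^i$ and the sign in $q^{2N}=\pm\epsilon^2$ via \eqref{eq-zeta2N}, so the subcases must be tracked separately before one observes they recombine into the single formula for $R$ in each parity of $n$.
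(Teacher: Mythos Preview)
Your proposal is correct and follows the same overall architecture as the paper: first verify $\prod_i r_i = R$, splitting into numerator and denominator and handling the denominator via \eqref{eq-lamhat-prodsq}; then check that $(\Pi_s,\Pi_t)$ hits every point of $\{xy=R\}$.

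The one substantive difference is in how you compute the numerator. You factor each term through the substitution $w=y+y^{-1}$, write $w+x+x^{-1}=x^{-1}(x+y)(x+y^{-1})$, and evaluate the resulting products using $\prod_i(z-a\zeta^i)=z^N-a^N$; this is correct but, as you yourself flag, requires tracking the sign $\prod_i\zeta^i$ and the parity of $N$ before things recombine. The paper sidesteps this entirely: it observes that both the product $\prod_i(w+q^{4i+2}\sigma^2+q^{-4i-2}\sigma^{-2})$ and the numerator of $R$ (viewed through $T_N(w)$) are monic polynomials in $w$ of the same degree, and then checks they share the same roots by computing $T_N(-q^{4i+2}\sigma^2-q^{-4i-2}\sigma^{-2})=(-q^2)^N(\sigma^{2N}+\sigma^{-2N})$ via \eqref{eq-zeta2N}. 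This ``same roots, same leading coefficient'' trick absorbs all the sign issues at once. Your surjectivity argument is essentially the same as the paper's, just with the roles of $s$ and $t$ swapped in the case $R\ne0$.
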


\begin{proof}
To show that the image is contained in the curve $xy=R$, we just need to show
\begin{equation*}
R=\prod_{i=1}^Dr_i
=\prod_{i=1}^D\frac{w+q^{4i+2}\sigma^2+q^{-4i-2}\sigma^{-2}}{\hat{\lambda}_i\hat{\lambda}_{i+1}}.
\end{equation*}
The product of the denominators of $r_i$ agrees with the denominator of $R$ by \eqref{eq-lamhat-prodsq}. Now look at the numerator. For a fixed $\sigma$, the numerator of the product of $r_i$ is a degree $D$ monic polynomial in $w$ with roots $-q^{4i+2}\sigma^2-q^{-4i-2}\sigma^{-2}$. Note when $D$ is even, each root repeats twice. Since
\begin{gather*}
T_N(-q^{4i+2}\sigma^2-q^{-4i-2}\sigma^{-2})=(-q^{4i+2}\sigma^2)^N+(-q^{-4i-2}\sigma^{-2})^N\\
=(-q^2)^N(\sigma^{2N}+\sigma^{-2N})
=\begin{cases}
-(z_0^2-2),&D\text{ is odd},\\
-\epsilon^2z_0,&D\text{ is even},
\end{cases}
\end{gather*}
the numerator of $R$ is also monic and has the same roots, so the numerators agree.

Now we need to show the reverse inclusion. First suppose $x\ne0$. Set $s_1=x$, $s_i=1$ for $i\ne 1$, and $t_i=r_i/s_i$ for all $i$. Then by definition, $s_it_i=r_i$, and $\Pi_s(\vec{s},\vec{t})=x$. Then $\Pi_t(\vec{s},\vec{t})=R/x=y$. Thus, $(x,y)$ is the image of a point in $E^0_{\sigma,w}$.

Now suppose $x=0$. Then $R=0$, so $r_i=0$ for at least one $i$. For each $i$ such that $r_i=0$, set $s_i=0$, and leave $t_i$ undetermined for now. For all other $i$, set $s_i=1$ and $t_i=r_i$. Then $s_it_i=r_i$ for all $i$, and $\Pi_s(\vec{s},\vec{t})=0=x$. By construction, all $t_i$ determined so far are nonzero. Thus, we can choose the remaining $t_i$ to satisfy $\Pi_t(\vec{s},\vec{t})=y$. This shows $(x,y)$ is the image of a point in $E^0_{\sigma,w}$.
\end{proof}

To identify the classical shadow of a type-$0$ representation, we need to calculate powers of matrices. We can visualize the calculation using graphs. This perspective is also useful later for reducibility.

\begin{definition}\label{def-graph}
Let $A$ be a linear operator on the vector space with basis $\{v_i\}_{i\in I}$. Write $Av_j=\sum_{i\in I}a_{ij}v_i$. The \term{associated graph} (with respect to the basis $\{v_i\}_{i\in I}$) is a weighted and directed graph with vertex set $I$. For every pair $i,j\in I$ with $a_{ij}\ne0$, there is an edge $j\to i$ with weight $a_{ij}$. In other words, $j\to i$ is weighted by the $v_i$-component of $Av_j$. For convenience, edges with zero weight may be freely added or removed.
\end{definition}

For example, in a type-$0$ representation, the graph of $\rho(\alpha_\infty)$ has the following form
\begin{equation}\label{eq-graph-inf}
\begin{tikzcd}
\cdots \arrow[r,shift left] &
i-1 \arrow[r,shift left,"s_{i-1}"] \arrow[l,shift left]&
i \arrow[r,shift left,"s_i"] \arrow[l,shift left,"t_{i-1}"] &
i+1 \arrow[r,shift left,"s_{i+1}"] \arrow[l,shift left,"t_i"] &
i+2 \arrow[r,shift left] \arrow[l,shift left,"t_{i+1}"] &
\cdots \arrow[l,shift left]
\end{tikzcd}
\end{equation}

Using graphs, the powers of a matrix $A$ can be written as
\begin{equation}\label{eq-matpow}
A^kv_j=\sum_{i\in I}\Bigg(\sum_{p\in P^{(k)}_{ij}}w(p)\Bigg)v_i,
\end{equation}
where $P^{(k)}_{ij}$ is the set of paths from $j$ to $i$ of length $k$, and $w(p)$ is the product of the weights on the path. %This is applied in the following lemma.

\begin{lemma}[Compare {\cite[Lemma~13]{Tak}}]\label{lemma-x1inf}
In a type-$0$ representation $\rho$, $A_1$ and $A_\infty$ acts as scalars. Therefore, the representation has a classical shadow.
\begin{enumerate}[(a)]
\item If $n$ is odd, then
\begin{equation}\label{eq-x1inf-a}
\rho(A_1)=\epsilon\sigma^D\Pi_s+\epsilon\sigma^{-D}\Pi_t,\qquad
\rho(A_\infty)=\Pi_s+\Pi_t,
\end{equation}
\item If $n$ is even, then
\begin{equation}\label{eq-x1inf-b}
\rho(A_1)=\sigma^D \Pi_s+\sigma^{-D} \Pi_t+f,\qquad
\rho(A_\infty)=\Pi_s+\Pi_t+f,
\end{equation}
where
\begin{equation}
f=f(z_0,w):=\frac{2\epsilon^2T_N(w)+4}{z_0-2}.
\end{equation}
\end{enumerate}
\end{lemma}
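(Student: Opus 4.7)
My plan is to first show that $\rho(A_1)$ and $\rho(A_\infty)$ act as scalar operators, and then identify the scalars by reading off the $v_0$-coefficient of $T_D(\rho(\alpha_m))v_0$ for $m\in\{1,\infty\}$. Scalarity is a commutativity argument: centrality of $A_1$ and $A_\infty$ forces $\rho(A_1)$ and $\rho(A_\infty)$ to commute with $\rho(\alpha_0)$, whose eigenvalues $\lambda_i$ are pairwise distinct for generic $\sigma$, so both operators are diagonal in $\{v_i\}$; writing $\rho(A_\infty)v_i=c_iv_i$, the commutation with $\rho(\alpha_\infty)$ then gives $s_i(c_{i+1}-c_i)=0=t_{i-1}(c_{i-1}-c_i)$. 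These relations force all $c_i$ to coincide on the Zariski-open dense subset of $E^0_{\sigma,w}$ where every $s_i$ and $t_j$ is nonzero, and since each $c_i$ depends polynomially on $(\vec s,\vec t)$, the equality propagates to all of $E^0_{\sigma,w}$. An identical argument using $\rho(\alpha_1)$ handles $\rho(A_1)$, and values of $\sigma$ with coinciding $\lambda_i$ are dealt with by continuity in $\sigma$.

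To compute the common value, I would use the graph interpretation \eqref{eq-matpow}. Write $B=\rho(\alpha_\infty)=U+V$ with $Uv_i=s_iv_{i+1}$ and $Vv_i=t_{i-1}v_{i-1}$; the associated graph is the $D$-cycle \eqref{eq-graph-inf}, and the $v_0$-coefficient of $B^kv_0$ is the weighted sum of closed walks of length $k$ at vertex $0$, subject to $\#U-\#V\in D\ints$. For $n$ odd ($D=n$), a closed walk of length $k<D$ forces $\#U=\#V$ and hence $k$ even; but $T_D$ has only odd-degree monomials for $D$ odd, so no contribution arises from $k<D$. The only surviving contribution is $k=D$, which forces a full forward or backward loop, yielding $\Pi_s+\Pi_t$. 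The same analysis for $\rho(A_1)$ with modified weights $\tilde Uv_i=q^{2i+1}\sigma s_iv_{i+1}$ and $\tilde Vv_i=q^{-2i+1}\sigma^{-1}t_{i-1}v_{i-1}$ gives full forward loop weight $q^{1+3+\cdots+(2D-1)}\sigma^D\Pi_s=q^{D^2}\sigma^D\Pi_s=\epsilon\sigma^D\Pi_s$ (since $D=N$ makes $q^{D^2}=\epsilon$), and analogously $\epsilon\sigma^{-D}\Pi_t$ for the backward loop.

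The case $n$ even ($D=2N$) is where the real work lies. Closed walks with $\#U=\#V$ now exist at every even length up to $D$, and $T_D$ contains all even-degree terms plus a constant, so many bouncing walks contribute in addition to the full loops. The full loops still give $\Pi_s+\Pi_t$ (with no $\epsilon$-factor, since $q^{D^2}=\epsilon^4=1$), and I expect the bouncing-walk sum, weighted by the coefficients of $T_D$, to collapse to $f(z_0,w)$. The natural route uses the factorization $\hat\lambda_i\hat\lambda_{i+1}=x_i-u$ (with $u=q^2+q^{-2}$ and $x_i=q^{4i+2}\sigma^2+q^{-4i-2}\sigma^{-2}$) together with partial-fraction identities over the roots of $z^N+\sigma^{2N}$, generalizing the identity $\sum_ir_i=D$ that makes the $n$ odd case close. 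The hard part will be this explicit combinatorial collapse; a shortcut is to assume the forms $\rho(A_\infty)=\Pi_s+\Pi_t+f$ and $\rho(A_1)=\sigma^D\Pi_s+\sigma^{-D}\Pi_t+f$, substitute into the central relation \eqref{eq-rel-even} together with $\rho(A_0)=z_0$, $\rho(p)=w$, and $\Pi_s\Pi_t=R(z_0,w)$, and pin down $f$ as the unique solution, confirming with a low-rank check such as $n=4$.
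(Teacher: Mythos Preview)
Your overall plan matches the paper's, including the graph/path count for the full loops and the use of the central relation \eqref{eq-rel-even} to pin down the remainder in the even case. Two points deserve tightening.

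First, your scalarity argument says the $c_i$ coincide on the Zariski-open subset of $E^0_{\sigma,w}$ where every $s_i$ and $t_j$ is nonzero. But if any $r_i=0$ then $s_it_i=0$ identically on $E^0_{\sigma,w}$, so that subset is empty. The fix is easy: you have both $s_i(c_{i+1}-c_i)=0$ and $t_i(c_{i+1}-c_i)=0$, and on each irreducible component of $E^0_{\sigma,w}$ at least one of $s_i,t_i$ is generically nonzero, so $c_{i+1}=c_i$ holds component by component. The paper avoids this by computing each diagonal entry directly and observing the result is $i$-independent.

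Second, for $n$ even your ``shortcut'' is in fact what the paper does, but you are skipping the steps that make it a proof rather than a heuristic. Do not assume a single $f$; start with two a priori distinct remainders $f_1,f_\infty$. The reason they are constant on $E^0_{\sigma,w}$ is that every bouncing closed walk has its edges pairing into $(j\to j{+}1,\,j{+}1\to j)$ with weight $s_jt_j=r_j$, so $f_1,f_\infty$ are polynomials in the $r_j$'s. Now substitute into \eqref{eq-rel-even} using $\Pi_s\Pi_t=R$: what remains is affine in $(\Pi_s,\Pi_t)$ with coefficients constant on $E^0_{\sigma,w}$. By Lemma~\ref{lemma-STimg} the image of $(\Pi_s,\Pi_t)$ is the hyperbola $xy=R$, which lies in no line, so the $\Pi_s$- and $\Pi_t$-coefficients must vanish separately. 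Solving these two linear equations yields $f_1=f_\infty=f$ exactly; no low-rank check is needed.
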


\begin{proof}
First suppose $z_0\ne\pm2$. This implies that the eigenvalues $\lambda_i$ of $\rho(\alpha_0)$ are distinct. Since $A_1,A_\infty$ are central, their actions commute with $\rho(\alpha_0)$. Thus, they must also be diagonal. Let
\begin{equation*}
\rho(A_1)v_i=z_1(i)v_i,\qquad \rho(A_\infty)v_i=z_\infty(i)v_i.
\end{equation*}
We show that $z_1(i),z_\infty(i)$ are given by the formulas above, which are independent of $i$. This implies that $A_1,A_\infty$ acts as scalars.

Consider $z_\infty=z_\infty(i)$. The associated graph of $\rho(\alpha_\infty)$ is a cycle going in both directions. See \eqref{eq-graph-inf}. Recall the exponents of all monomials in the Chebyshev polynomial $T_D(x)$ have the same parity as $D$. Then by \eqref{eq-matpow}, $z_\infty$ is a sum of weights of cycles based at $i$ with lengths at most $D$ and having the same parity as $D$. There are exactly two length $D$ cycles in the graph. They go all the way around the graph and contribute $\Pi_s+\Pi_t$ to $z_\infty$. For $z_1$, the graph is also a cycle but with different weights. The contribution is
\begin{equation}
\prod_{i=1}^Dq^{2i+1}\sigma s_i+\prod_{i=1}^Dq^{-2i-1}\sigma^{-1}t_i
=q^{D^2}\sigma^D\Pi_s+q^{-D^2}\sigma^{-D}\Pi_t.
\end{equation}

(a) Since $D=n$ is odd, any additional contribution must be from odd cycles. However, there are no other odd cycles, so there are no other terms. This proves \eqref{eq-x1inf-a}.

(b) Since $q^{D^2}=q^{4N^2}=1$, we can write
\begin{equation}\label{eq-x1inf-pre}
z_1=\sigma^D \Pi_s+\sigma^{-D} \Pi_t+f_1,\quad z_\infty=\Pi_s+\Pi_t+f_\infty
\end{equation}
so that $f_1$ and $f_\infty$, which depend on $i$ a priori, represent the contributions from other cycles. The cycles are too short to go all the way around, so they must back track. The edges of such a cycle can be rearranged into pairs $j\to j+1$ and $j+1\to j$, and each pair contributes weight $s_jt_j=r_j$. Therefore, $f_1$ and $f_\infty$ are polynomials of $r_j$, which are constant on $E^0_{\sigma,w}$.

Now apply \eqref{eq-rel-even} to the vector $v_i$ in the representation. Using \eqref{eq-x1inf-pre} and then $\Pi_s\Pi_t=R$, we get
\begin{align*}
0&=\left((\sigma^{-D}-\sigma^D)f_1-(1-\sigma^{2D})f_\infty-(1+\sigma^D)(2\epsilon^2T_N(w)+4)\right)\Pi_s+\\
&\quad+\left((\sigma^D-\sigma^{-D})f_1-(1-\sigma^{-2D})f_\infty-(1+\sigma^{-D})(2\epsilon^2T_N(w)+4)\right)\Pi_t+\\
&\quad+(\text{terms constant on }E^0_{\sigma,w}).
\end{align*}
This is a linear relation between $\Pi_s,\Pi_t$ and the coefficients are constant on $E^0_{\sigma,w}$. However, the image of $(\Pi_s,\Pi_t)$ cannot be contained in a line by Lemma~\ref{lemma-STimg}. Therefore, the coefficients of $\Pi_s$ and $\Pi_t$ are both zero. Solving for $f_1$ and $f_\infty$ give us $f_1=f_\infty=f$.

Finally, we deal with the case where $z_0=-2$ and $D=n$ is even. Define
\begin{equation*}
E^0=\bigcup_{\substack{\sigma\text{ general}\\w\in\cx}}E^0_{\sigma,w}\times\{(\sigma,w)\}\subset\cx^{2D+2}.
\end{equation*}
Matrix elements of $A_1$ and $A_\infty$ are continuous functions on $E^0$. The lemma is already proved for all slices $E^0_{\sigma,w}$ with $z_0\ne\pm2$. Then the remaining case follows from a continuity argument.
\end{proof} %TODO: restore explicit proof?

\begin{comment}
Given point $\phi\in V_p$ with $z_0=-2$, we just need to find a path in $V_p$ connecting $\phi$ and a point with $z_0\ne\pm2$.

Let $\sigma(\tau)=\sigma(\phi)+\tau$, and define $z_0(\tau)$ as usual. Clearly $\sigma(\tau)\in C_\times$ and $z_0(\tau)\ne\pm2$ for small $\tau\ne0$. By \eqref{eq-bigR}, there exists an $\epsilon$ small enough such that $R(z_0(\tau),p)\ne0$ for $0<\tau<\epsilon$. If $s_i(\phi)\ne0$, define $s_i(\tau)=s_i(\phi)$ and $t_i(\tau)=r_i(\sigma(\tau),p)/s_i$. If $s_i(\phi)=0$ but $t_i(\phi)\ne0$, define $s_i(\tau)$ and $t_i(\tau)$ similarly with $s$ and $t$ reversed. If $s_i(\phi)=t_i(\phi)=0$, define $s_i(\tau)=t_i(\tau)=r_i(\sigma(\tau),p)^{1/2}$. The branch of the square root can be chosen continuously since we only need a path. By construction, the path $(s_i(\tau),t_i(\tau),\sigma(\tau))\in V_p$ satisfy the requirement.
\end{comment}

\begin{proposition}\label{prop-gen-exist}
There exists a type-$0$ representation for every point $(z_0,z_1,z_\infty,w)\in\centV$ with $z_0$ general.
\end{proposition}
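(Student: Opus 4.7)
The plan is to realize every point $(z_0,z_1,z_\infty,w)\in\centV$ with $z_0$ general as the classical shadow of a type-$0$ representation, by proceeding in three steps: choose $\sigma$, reduce the construction to finding a single pair of scalars $(\Pi_s,\Pi_t)$, and then invoke Lemma~\ref{lemma-STimg} to realize that pair by an actual tuple $(\vec s,\vec t)\in E^0_{\sigma,w}$.

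First I would pick any $\sigma\in\cx^\times$ satisfying $\sigma^D+\sigma^{-D}=z_0$. Proposition~\ref{prop-rep} then produces, from each $(\vec s,\vec t)\in E^0_{\sigma,w}$, a representation $\rho$ with $\rho(A_0)=z_0$ and $\rho(p)=w$ automatically. By Lemma~\ref{lemma-x1inf}, the remaining central actions $\rho(A_1),\rho(A_\infty)$ depend on $(\vec s,\vec t)$ only through the products $(\Pi_s,\Pi_t)$, so the whole construction reduces to finding such a pair satisfying two conditions: (i) the formulas in Lemma~\ref{lemma-x1inf} yield the prescribed values $z_1,z_\infty$, and (ii) $\Pi_s\Pi_t=R(z_0,w)$. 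Once such a pair exists, Lemma~\ref{lemma-STimg} supplies a compatible $(\vec s,\vec t)\in E^0_{\sigma,w}$ and the representation is built.

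Condition (i) is a $2\times 2$ linear system in $(\Pi_s,\Pi_t)$ whose determinant is a nonzero multiple of $\sigma^D-\sigma^{-D}$. In the main sub-case $z_0\ne\pm 2$ this determinant is nonzero, so I would solve the system uniquely for $\Pi_s,\Pi_t$ as explicit rational expressions in $z_0,z_1,z_\infty,w$ and then verify (ii) by direct computation. After clearing the common denominator $(\sigma^D-\sigma^{-D})^2$, which equals $z_0^2-4$ in the odd case and $(z_0-2)(z_0+2)$ in the even case, this verification should reduce exactly to the defining relation of $\centV$: \eqref{eq-rel-odd} when $n$ is odd and \eqref{eq-rel-even} when $n$ is even. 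Since $z\in\centV$ by hypothesis, the identity holds.

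The remaining sub-case is $z_0=-2$ with $n$ even, where $\sigma^D=-1$ and the linear system genuinely degenerates. Here I would first observe that \eqref{eq-rel-even} specialized at $z_0=-2$ forces $z_1+z_\infty=-2-\epsilon^2T_N(w)=2f(-2,w)$, which is precisely the compatibility condition making the two equations in (i) collapse into a single linear equation fixing $\Pi_s+\Pi_t$. I may then pick $\Pi_s,\Pi_t$ as any pair of complex roots of the quadratic $x^2-(\Pi_s+\Pi_t)x+R(-2,w)=0$, which exist over $\cx$. The main obstacle in the whole argument is the parity-sensitive algebraic verification of $\Pi_s\Pi_t=R(z_0,w)$ in the generic sub-case; no new idea is needed, but care with signs and with the factors of $\epsilon$ and $\epsilon^2$ is required, especially in the even case where $R$ is a square and the shift $f$ enters nontrivially.
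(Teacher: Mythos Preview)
Your proposal is correct and follows essentially the same approach as the paper's proof: invert the $2\times 2$ system from Lemma~\ref{lemma-x1inf} when $z_0\ne\pm2$, verify $\Pi_s\Pi_t=R$ using the defining relation of $\centV$, handle the degenerate case $z_0=-2$ with $n$ even via the compatibility $z_1+z_\infty=2f$ forced by \eqref{eq-rel-even}, and then lift the pair $(\Pi_s,\Pi_t)$ to a point of $E^0_{\sigma,w}$ via Lemma~\ref{lemma-STimg}. The paper writes the linear system inversion explicitly and states the quadratic for the degenerate case as \eqref{eq-m2-exist}, but the logic is identical.
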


\begin{proof}
If $z_0\ne\pm2$, then $\sigma^D\ne\pm1$. Let
\begin{equation}
\begin{pmatrix}x\\y\end{pmatrix}=\begin{pmatrix}\sigma^D&\sigma^{-D}\\1&1\end{pmatrix}^{-1}\begin{pmatrix}z'_1\\z'_\infty\end{pmatrix},\quad\text{where }
(z'_1,z'_\infty)=\begin{cases}
(\epsilon z_1,z_\infty),&n\text{ is odd},\\
(z_1-f,z_\infty-f),&n\text{ is even}.\end{cases}
\end{equation}
After some careful calculations, $xy=R$. If $z_0=-2$ and $n$ is even, we can show that $z_1+z_\infty=2f$ using \eqref{eq-rel-even}. In this case, let $(x,y)$ be a solution to the equations
\begin{equation}\label{eq-m2-exist}
x+y=z_\infty-f,\qquad xy=R.
\end{equation}
By Lemma~\ref{lemma-STimg}, there exists $e\in E^0_{\sigma,w}$ such that $\Pi_s(e)=x$, $\Pi_t(e)=y$. Then by Lemma~\ref{lemma-x1inf}, the classical shadow of the type-$0$ representation defined by $e$ is $(z_0,z_1,z_\infty,w)$.
\end{proof}

\subsection{Reducibility of type-$0$ representations}

\begin{lemma}\label{lemma-red}
Suppose $\rho:\skein_q(\ptorus)\to\End(V)$ is a type-$0$ representation.
\begin{enumerate}
\item If $\Pi_s=\Pi_t=0$ and $r_i=r_j=0$ for some $i\ne j$, then $\rho$ is reducible.
\item The converse is also true if $z_0\ne\pm2$.
\end{enumerate}
\end{lemma}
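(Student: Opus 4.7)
The strategy is to combine two facts about type-$0$ representations: first, since $\rho(\alpha_0)$ is diagonal with the $v_i$ as eigenvectors, any subspace of the form $V_S:=\Span\{v_i : i\in S\}$ for $S\subset\ints/D$ is automatically $\rho(\alpha_0)$-invariant; second, the associated graphs of $\rho(\alpha_\infty)$ and $\rho(\alpha_1)$ both have the cyclic pattern \eqref{eq-graph-inf} and share the same zero/nonzero pattern of entries. Consequently, $V_S$ is invariant under the whole representation if and only if $s_i=0$ for every $i\in S$ with $i+1\notin S$ and $t_{i-1}=0$ for every $i\in S$ with $i-1\notin S$. Since the algebra is generated by $\alpha_0,\alpha_1,\alpha_\infty$, this is the only invariance condition to check.

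For part (1), I will exhibit an invariant subspace cut out by a single cyclic arc. Specifically, if $s_a=0$ and $t_b=0$ with $a\ne b$, then $S=\{b+1,b+2,\ldots,a\}\subset\ints/D$ is a proper nonempty arc whose only boundary data are $s_a$ on the leaving side and $t_b$ on the entering side, so $V_S$ is invariant. It remains to extract such a pair $(a,b)$ from the hypotheses. From $\Pi_s=0$ we obtain some $s_a=0$, and from $\Pi_t=0$ some $t_b=0$; if $a\ne b$ we are done. Otherwise $s_a=t_a=0$, so $r_a=0$, and the additional hypothesis supplies another index $j\ne a$ with $r_j=0$, i.e., with $s_j=0$ or $t_j=0$. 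Replacing $a$ by $j$ in the first case or $b$ by $j$ in the second produces a valid pair with distinct indices.

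For part (2), since $z_0\ne\pm 2$ I first verify that the eigenvalues $\lambda_i$ of $\rho(\alpha_0)$ are pairwise distinct. A direct calculation, using that $q^{2i}\ne q^{2j}$ for $i\ne j$ in $\ints/D$, shows $\lambda_i=\lambda_j$ forces $\sigma^{2D}=1$, equivalently $z_0=\pm 2$. With distinct eigenvalues, every proper nonempty invariant subspace must be of the form $V_S$ for a proper nonempty $S\subset\ints/D$. Traversing the cycle once produces at least one \emph{exit} $i\in S$ with $i+1\notin S$ and at least one \emph{entry} $i''\in S$ with $i''-1\notin S$, yielding $s_i=0$ (hence $\Pi_s=0$ and $r_i=0$) together with $t_{i''-1}=0$ (hence $\Pi_t=0$ and $r_{i''-1}=0$). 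Finally $i\ne i''-1$, since equality would give $i+1=i''$, contradicting $i+1\notin S$ and $i''\in S$. The main subtleties are the eigenvalue-distinctness calculation, which splits by the parity of $n$, and the bookkeeping in (1) when the naive choice of $a,b$ coincides; both are manageable given the explicit formulas for $\lambda_i$ and $r_i$.
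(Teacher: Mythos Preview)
Your proof is correct and follows essentially the same approach as the paper's: both parts hinge on the observation that coordinate subspaces $V_S$ are exactly the candidate invariant subspaces (automatically for (1), and forced by the distinct $\rho(\alpha_0)$-eigenvalues for (2)), and invariance of $V_S$ amounts to the vanishing of the boundary weights $s_i,t_{i-1}$. Your case analysis in (1) is a bit more explicit than the paper's one-line ``equivalent to $t_i=s_j=0$ after possibly swapping'', and your distinctness argument and $i\ne i''-1$ check in (2) match the paper's reasoning; the remark that the eigenvalue calculation ``splits by the parity of $n$'' is unnecessary, since $\sigma^2=q^{-2(i+j)}$ gives $\sigma^{2D}=1$ directly.
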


\begin{proof}
For $B\subset\ints/D$, let $V_B=\Span\{v_b\mid b\in B\}$.

(1) It is easy to see that the conditions are equivalent to $t_i=s_j=0$ (after possibly swapping $i$ and $j$). Then $V_{(i,j]}=\Span\{v_{i+1},\dotsc,v_j\}$ is an invariant subspace. It is proper and nontrivial since $i\ne j$. Thus, $\rho$ is reducible.

(2) Since $z_0\ne\pm2$, the eigenvalues of $\rho(\alpha_0)$ are distinct. Thus, any invariant subspace is of the form $V_B$. If $\rho$ is reducible, then there exists a proper and nontrivial $B$ such that $V_B$ is invariant. Thus, there exists $j\in B$ such that $j+1\notin B$. Since
\begin{equation}
\rho(\alpha_0-\lambda_{j-1})\rho(\alpha_\infty)v_j=\rho(\alpha_0-\lambda_{j-1})(s_jv_{j+1}+t_jv_{j-1})=(\lambda_{j+1}-\lambda_{j-1})s_jv_{j+1},
\end{equation}
$s_j=0$. Similarly, there exists $i\notin B$ (so $i\ne j$) such that $i+1\in B$, which implies $t_i=0$.
\end{proof}

\begin{remark}\label{rem-red}
The proof has a simple visualization using the graph associated to $\alpha_\infty$. Following the argument of the proof, if $V_B$ is an invariant subspace for some $B\subset\ints/D$, then there are no arrows coming out of $B$. Therefore, the graph fails to be strongly connected.

The graph of $\rho(\alpha_\infty)$ is shown in \eqref{eq-graph-inf}. If all edges have nonzero weights, then the graph is strongly connected. The condition $t_i=s_j=0$ removes enough edges to break strong connectivity.
\end{remark}

\begin{lemma}\label{lemma-type0-red}
Suppose $\rho$ is a type-$0$ representation with classical shadow $z=(z_0,z_1,z_\infty,w)\in\centV$ where $z_0\ne\pm2$. Then $\rho$ is reducible if and only if $z$ is a singular point of $\centV$.
\end{lemma}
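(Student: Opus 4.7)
The plan is to apply Lemma~\ref{lemma-red}(2), which reduces the reducibility of $\rho$ to the conjunction ``$\Pi_s=\Pi_t=0$ and $r_i=r_j=0$ for some $i\ne j$,'' and then to translate each half into a polynomial condition on the coordinates of $z=(z_0,z_1,z_\infty,w)$ and compare with the description of $\mathrm{Sing}(\centV)$ from Section~\ref{sec-sing}. The hypothesis $z_0\ne\pm 2$ will enter through both $\sigma^D-\sigma^{-D}\ne 0$ (invertibility of the linear system in Lemma~\ref{lemma-x1inf}) and $\hat\lambda_i\hat\lambda_{i+1}\ne 0$ (allowing identification of the zeros of $r_i$ with the solutions of $w=w_i$).

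First I would solve the system in Lemma~\ref{lemma-x1inf} for $\Pi_s,\Pi_t$: the coefficient matrix $\begin{pmatrix}\sigma^D & \sigma^{-D} \\ 1 & 1\end{pmatrix}$ (up to a scalar $\epsilon$ in the odd case) is invertible under the hypothesis, so $\Pi_s=\Pi_t=0$ is equivalent to $z_1=z_\infty=0$ in the odd case and to $z_1=z_\infty=f(z_0,w)$ in the even case. Next I would rewrite ``two $r_i$ vanish'' as a multiplicity condition: since $r_i=0$ is equivalent to $w=w_i:=-q^{4i+2}\sigma^2-q^{-4i-2}\sigma^{-2}$, the count of vanishing $r_i$'s equals the multiplicity of $w$ as a root of the monic polynomial $P(w):=\prod_{i=1}^D(w-w_i)$. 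The calculation inside the proof of Lemma~\ref{lemma-STimg} identifies $P(w)=T_N(w)+z_0^2-2$ in the odd case and $P(w)=(T_N(w)+\epsilon^2 z_0)^2$ in the even case. In the even case every root of $P$ is automatically doubled, so $R=0$ alone already forces two $r_i$ to vanish; in the odd case the multiplicity condition becomes $T_N(w)=2-z_0^2$ together with $T_N'(w)=0$.

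Finally I would match these algebraic conditions with $\mathrm{Sing}(\centV)$. For the even case, substituting $z_1=z_\infty=f$ into \eqref{eq-rel-even} and simplifying under $z_0\ne\pm 2$ leaves only $z_0=-\epsilon^2 T_N(w)$, which in turn forces $f=-2$ and therefore $z_1=z_\infty=-2$; this is exactly the singular description of type~(1) with $z_0\ne\pm 2$, and the converse direction is a direct check. For the odd case, $z_1=z_\infty=0$ together with \eqref{eq-rel-odd} gives $T_N(w)=2-z_0^2$; then $T_N'(w)=0$ forces $T_N(w)\in\{\pm 2\}$ (the only critical values of $T_N$), so $z_0^2\in\{0,4\}$, and the hypothesis $z_0\ne\pm 2$ forces $z_0=0$, matching the singular-in-$\centV$ description exactly. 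The main obstacle will be the odd case, where $\Pi_s=\Pi_t=0$ by itself is strictly weaker than reducibility; the multiplicity condition $T_N'(w)=0$---which simultaneously distinguishes ``singular in $\centV$'' from ``singular in the slice'' and ``two $r_i$'s vanish'' from ``one $r_i$ vanishes''---is precisely the extra ingredient that closes the gap. In the even case the built-in doubling $D=2N$ makes this refinement automatic.
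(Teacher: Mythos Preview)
Your argument is correct, and in the even case it is essentially the paper's: both deduce $z_1=z_\infty=f$ from $\Pi_s=\Pi_t=0$ and then obtain $T_N(w)=-\epsilon^2 z_0$ (the paper reads this off a single equation $r_i=0$, you get it equivalently from $R=\Pi_s\Pi_t=0$; your substitution into \eqref{eq-rel-even} is redundant but harmless). The odd case, however, is handled differently. The paper does not invoke the polynomial $P(w)$ or the critical values of $T_N$: from $r_i=r_j=0$ with $i\ne j$ it subtracts the two numerator equations directly to obtain $q^{4(i+j+1)}\sigma^4=1$, hence $\sigma^{4N}=1$, and since $\sigma^N\ne\pm1$ this forces $\sigma^N=\pm\iunit$ and $z_0=0$; a separate line rules out $w=2$. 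Your route---identifying ``two $r_i$ vanish'' with the multiplicity condition $P(w)=P'(w)=0$, i.e.\ $T_N(w)=2-z_0^2$ and $T_N'(w)=0$, and then using that the critical values of $T_N$ are exactly $\pm2$---is a cleaner and more uniform reformulation. It has the advantage of making transparent why the same condition $T_N'(w)=0$ simultaneously governs reducibility and the passage from ``singular in the slice'' to ``singular in $\centV$''; the paper's hands-on manipulation is shorter but obscures this structural coincidence.
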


\begin{proof}[Proof when $n$ is odd]
($\Rightarrow$)
By Lemma~\ref{lemma-red} and \eqref{eq-x1inf-a}, $z_1=z_\infty=0$. On the other hand, since $r_i=r_j=0$ for some $i\ne j$,
\begin{equation}
w+q^{4i+2}\sigma^2+q^{-4i-2}\sigma^{-2}=0=w+q^{4j+2}\sigma^2+q^{-4j-2}\sigma^{-2}.
\end{equation}
Since the order of $q^4$ is $N$, which is odd, $q^{4i+2}\ne q^{4j+2}$. This implies
\begin{equation}
q^{4i+2}\sigma^2=q^{-4j-2}\sigma^{-2}\ne q^{4j+2}\sigma^2,\qquad
q^{4(i+j+1)}\sigma^4=1.
\end{equation}
Therefore, $\sigma^{4N}=1$. Since $z_0\ne\pm2$ implies $\sigma^N\ne\pm1$, we have $\sigma^N=\pm\iunit$. Thus, $z_0=\sigma^N+\sigma^{-N}=0$, and the classical shadow has the form $(0,0,0,w)$ with $T_N(w)=2$. Finally, $q^{-4j-2}\sigma^{-2}\ne q^{4j+2}\sigma^2$ implies $q^{4j+2}\sigma^2\ne\pm1$, so $w\ne2$, and the classical shadow is a singular point.

($\Leftarrow$) The only singular points with $z_0\ne\pm2$ are $z=(0,0,0,w)$. Since $z_0=0$, after relabeling the basis elements, we can set $\sigma=\iunit$. By \eqref{eq-x1inf-a}, $z_1=z_\infty=0$ implies $\Pi_s=\Pi_t=0$. Since $T_N(w)=2$, we can write $w=q^{2k}+q^{-2k}$, with $k$ not divisible by $N$ since $w\ne2$. Therefore,
\begin{equation*}
w+q^{4i+2}\sigma^2+q^{-4i-2}\sigma^{-2}
=q^{2k}+q^{-2k}-q^{4i+2}-q^{-4i-2}
\end{equation*}
vanishes for two different $i$ corresponding to $q^{4i+2}=q^{\pm2k}$. The corresponding $r_i$ also vanish. Therefore, $\rho$ is reducible by Lemma~\ref{lemma-red}.
\end{proof}

\begin{proof}[Proof when $n$ is even]
($\Rightarrow$)
Using Lemma~\ref{lemma-red} and \eqref{eq-x1inf-b}, $z_1=z_\infty=f$. We also have $w+q^{4i+2}\sigma^2+q^{-4i-2}\sigma^{-2}=0$ since $r_i=0$. Then
\begin{align}
T_N(w)&=(-q^{4i+2}\sigma^2)^N+(-q^{-4i-2}\sigma^{-2})^N
=-\epsilon^2z_0.\\
f&=\frac{2\epsilon^2(-\epsilon^2z_0)+4}{z_0-2}=-2.
\end{align}
Thus, the classical shadow has the form $(z_0,-2,-2,w)$ with $T_N(w)=-\epsilon^2z_0$. This is a singular point of $\centV$.

($\Leftarrow$)
We essentially reverse the calculation above. Since $z_0\ne\pm2$, the singular point is of the form $(z_0,-2,-2,w)$ with $T_N(w)=-\epsilon^2z_0$. From the definitions, $f=-2$ and $R=0$. Then \eqref{eq-x1inf-b} implies $\Pi_s=\Pi_t=0$. On the other hand, $T_N(w)=-\epsilon^2z_0$ means $w=-q^{4k+2}\sigma^2-q^{-4k-2}\sigma^{-2}$ for some $k$. Therefore, $w+q^{4i+2}\sigma^2+q^{-4i-2}\sigma^{-2}$ vanishes for $i=k,k+N$, and so do the corresponding $r_i$. Therefore, the type-$0$ representation is reducible by Lemma~\ref{lemma-red}.
\end{proof}

\subsection{Exceptional points when $n$ is odd}

By the proof of Lemma~\ref{lemma-except}, if $(z_0,z_1,z_\infty,w)$ is exceptional, then $T_N(w)=-2$. Furthermore, it is singular if and only if $w\ne-2$.

The action of $H^\ast=H^1(\ptorus;\mathbb{Z}/2)$ on the skein algebra that induces sign changes on $z_m$. The exceptional points with the same $w$ are in the same orbit of this action. Thus, we can focus on any choice of sign for $z_m$.

\begin{proposition}\label{prop-except-odd}
Define $\bar{N}=\frac{N-1}{2}$. Let $V$ be an $N$-dimensional vector space with a basis $\{u_i\mid i=-\bar{N},\dotsc,\bar{N}\}$. Let $\lambda_i=q^{2i}+q^{-2i}$. For $i\ne0$, let
\begin{equation}
s_i=\frac{q^{2i-k+1}-q^{-2i+k-1}}{q^{2i}-q^{-2i}},\qquad
s'_i=\frac{q^{2i+2}-q^{-2i-2}}{q^{2i}-q^{-2i}}s_i.\qquad
\beta_i=\frac{2q^{k-1}-2q^{-k+1}}{(q^{2i}-q^{-2i})^3}.
\end{equation}
Finally, let
\begin{equation}
u^\ast_i=\begin{cases}u_{-\bar{N}},&i<-\bar{N},\\u_i,&-\bar{N}\le i\le \bar{N},\\u_{\bar{N}},&i>\bar{N}.\end{cases}
\end{equation}
Then the equations
\begin{align}
\rho(\alpha_0)u_i&=
\begin{cases}
\lambda_iu_i,&-\bar{N}\le i\le0,\\
\lambda_iu_i+u_{-i},&0<i\le\bar{N},
\end{cases}\\
\rho(\alpha_\infty)u_i&=
\begin{cases}
s_iu_{i+1}+s_{-i}u^\ast_{i-1},&-\bar{N}\le i<0,\\
(q^{k-1}+q^{-k+1})u_{-1}+(q^2-q^{-2})(q^{-k+1}-q^{k-1})u_1,&i=0,\\
\beta_iu^\ast_{-i-1}-\beta_iu_{-i+1}+s'_{-i}u_{i-1}+s'_iu^\ast_{i+1},&0<i\le\bar{N}
\end{cases}
\end{align}
define a representation $\rho:\skein_q(\ptorus)\to\End(V)$ with classical shadow $(2,2\epsilon^k,2\epsilon^{k-1},w)$ where $w=-q^{2k}-q^{-2k}$.
\end{proposition}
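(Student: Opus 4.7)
The plan is to verify the defining relations \eqref{eq-rel1}--\eqref{eq-rel0} of the skein algebra on the given operators and then to identify the classical shadow. Since $q$ is not a $4$th root of unity, \eqref{eq-rel1} determines $\rho(\alpha_1)$ from $\rho(\alpha_0)$ and $\rho(\alpha_\infty)$, so after computing this bracket in closed form, only \eqref{eq-relinf} and \eqref{eq-rel0} remain to be checked.

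I would begin by analyzing $\rho(\alpha_0)$. In the basis $u_{-\bar N}, \dots, u_0, \dots, u_{\bar N}$, the operator is upper triangular: $u_0$ is a genuine eigenvector with eigenvalue $\lambda_0 = 2$, while for $1 \le i \le \bar N$ the pair $(u_{-i}, u_i)$ spans a $2$-dimensional invariant subspace on which $\rho(\alpha_0)$ acts by the Jordan block $\bigl(\begin{smallmatrix}\lambda_i & 1\\ 0 & \lambda_i\end{smallmatrix}\bigr)$ (using $\lambda_{-i} = \lambda_i$). Because $q^{2N} = 1$, one computes $T_N(\lambda_i) = q^{2iN} + q^{-2iN} = 2$ and $T_N'(\lambda_i) = N(q^{2iN}-q^{-2iN})/(q^{2i}-q^{-2i}) = 0$, so $T_N$ sends each Jordan block to $2\,\id$ and thus $\rho(A_0) = 2$.

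Next I would check \eqref{eq-relinf} and \eqref{eq-rel0} by expanding the two commutators on each basis vector $u_j$ and matching coefficients with the formula for $\rho(\alpha_0)$ or $\rho(\alpha_\infty)$. The calculation splits by the sign of $j$ and by whether $j\pm 1, -j, -j\pm 1$ lie within $[-\bar N, \bar N]$ or are truncated via $u^\ast$. For interior indices, the identities reduce to purely algebraic relations built into the definitions of $s_i, s'_i, \beta_i$; at the center $j=0$ and the two ends $j=\pm\bar N$, the extra terms produced by the truncation are absorbed by the special-case formulas in the statement.

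To compute the remaining central traces I would use the graph method of Definition~\ref{def-graph} and \eqref{eq-matpow}: $\rho(A_m)u_j$ is a sum of weights of closed walks of length $N$ in the graph of $\rho(\alpha_m)$. By Lemma~\ref{lemma-except}, any character with $A_0 = 2$ and $T_N(w) = -2$ must have $A_1, A_\infty \in \{\pm 2\}$, so it suffices to compute the coefficient on one convenient vector, say $u_{-\bar N}$, to pin down both the scalar and the sign; walks on the left half of the graph behave essentially like those in a type-$0$ representation in the sense of Lemma~\ref{lemma-x1inf}, while the perturbation in the right half contributes the requisite factors of $\epsilon$. The equality $\rho(p) = w$ then follows by substituting the three computed traces into \eqref{eq-boundary}. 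The main obstacle is the bookkeeping in the relation check: the three regimes defining $\rho(\alpha_\infty)$ combined with the truncation $u^\ast$ produce many boundary sub-cases, each requiring its own elementary algebraic identity in $q$ and $k$.
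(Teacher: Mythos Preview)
Your approach is genuinely different from the paper's. The paper does \emph{not} verify the relations directly: it starts from a one-parameter family of type-$0$ representations with $\sigma$ generic (so $z_0\ne\pm2$), chooses the particular $s_i,t_i$ that make the classical shadow equal to $(z_0,\,2\epsilon^k z_0,\,2\epsilon^{k-1},\,w)$, passes to the new basis $u_i$, and then lets $\sigma\to1$. Both the defining relations and the scalar action of $A_0,A_1,A_\infty,p$ are inherited by continuity from Proposition~\ref{prop-rep} and Lemma~\ref{lemma-x1inf}; no direct computation at the degenerate point is needed.

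Your plan to check \eqref{eq-rel1}--\eqref{eq-rel0} by hand is valid in principle, and your Jordan-block argument for $\rho(A_0)=2$ is correct. The gap is in the classical shadow for $A_1,A_\infty,p$. First, the appeal to Lemma~\ref{lemma-except} is wrong: that lemma characterizes exceptional points, it does \emph{not} assert that $A_0=2$ and $T_N(w)=-2$ force $A_1,A_\infty\in\{\pm2\}$. Indeed, substituting $A_0=2$, $T_N(p)=-2$ into \eqref{eq-rel-odd} gives only $(A_1-\epsilon A_\infty)^2=0$, so the slice is an entire line, not two points. Second, and more seriously, nothing in your outline establishes that $\rho(A_1),\rho(A_\infty)$ act as \emph{scalars} at all. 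In Lemma~\ref{lemma-x1inf} this was forced because the eigenvalues $\lambda_i$ of $\rho(\alpha_0)$ were distinct; here $\lambda_i=\lambda_{-i}$, the generalized eigenspaces are $2$-dimensional, and centrality alone only makes $\rho(A_\infty)$ block-diagonal, not scalar. Evaluating at a single vector $u_{-\bar N}$ cannot detect this. Finally, \eqref{eq-boundary} expresses $p$ in the generators $\alpha_m$, not in $A_m$, so ``substituting the three computed traces'' does not compute $\rho(p)$; you would need the full cubic operator computation, and you need it \emph{before} you know $T_N(w)$, making the argument as written circular. The limit argument in the paper sidesteps all of this at once.
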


The construction of this representation is given in Appendix~\ref{sec-except-odd}.

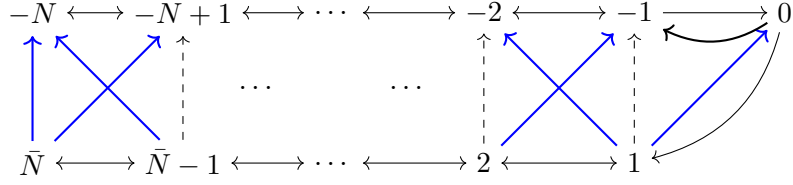
\begin{figure}
\centering
\begin{tikzpicture}[every loop/.style={}]
\node (N) at (10,1){$0$};
\node (1) at (8,1){$-1$};\node (m1) at (8,-1){$1$};
\node (2) at (6,1){$-2$};\node (m2) at (6,-1){$2$};
\node (d) at (4,1){$\dots$};\node (md) at (4,-1){$\dots$};
\path (1) edge[<->] (2) edge[->] (N);
\path (m1) edge[<->] (m2) edge[thick,blue,->] (N);
\path[bend left,->] (N) edge[thick] (1) edge (m1);
\path[dashed,->] (m1) edge (1);
\path[dashed,->] (m2) edge (2);
\path (2) edge[<->] (d) (m2) edge[<->] (md);
\node (hm1) at (2,1){$-\bar{N}+1$};
\node (hp1) at (2,-1){$\bar{N}-1$};
\node (hm) at (0,1){$-\bar{N}$};
\node (hp) at (0,-1){$\bar{N}$};
\path[<->] (hm1) edge (d) edge (hm);
\path[<->] (hp1) edge (md) edge (hp);
\path[thick,blue,->] (m1) edge (2);
\path[thick,blue,->] (m2) edge (1);
\draw (3,0)node{$\dots$} (5,0)node{$\dots$};
\path (hp1) edge[thick,blue,->] (hm) edge[dashed,->] (hm1);
\path[thick,blue,->] (hp) edge (hm1) edge (hm);
%\draw[->] (hm) to[out=-180,in=-135,loop] (hm);
%\draw[->] (hp) to[out=180,in=135,loop] (hp);
\end{tikzpicture}
\caption{The associated graph of $\rho(\alpha_\infty)$ when $n$ is odd}\label{fig-GZ-odd}
\end{figure}

\begin{lemma}\label{lemma-red-2o}
The representation above is reducible if and only if $w\ne-2$. %the classical shadow is a singular point in $\centV$.
\end{lemma}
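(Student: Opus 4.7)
The plan is to analyze invariant subspaces of $\rho$ by combining the graph-theoretic strategy of Remark~\ref{rem-red} with the Jordan structure of $\rho(\alpha_0)$, in the spirit of Lemma~\ref{lemma-type0-red} but adapted to the non-diagonal action of $\rho(\alpha_0)$ on the positive-index block.

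First I would compute the vanishing loci of the coefficients appearing in $\rho(\alpha_\infty)$: writing $w=-q^{2k}-q^{-2k}$, one finds that $s_i=0$ iff $k\equiv 2i+1\pmod N$, $\beta_i=0$ iff $k\equiv 1\pmod N$, and the coefficient $q^{k-1}+q^{-k+1}$ on the edge $u_0\to u_{-1}$ vanishes iff $k\equiv (N+1)/2\pmod N$. In particular, when $k\equiv 0\pmod N$ (equivalently $w=-2$), all these coefficients are nonzero, so every edge of the graph in Figure~\ref{fig-GZ-odd} is present.

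For the reducibility direction ($w\ne -2$, i.e.\ $k\not\equiv 0\pmod N$), my plan is to exhibit a proper nonzero invariant subspace. The vanishing coefficients identified above will isolate a subset $B\subset\{-\bar N,\dots,\bar N\}$ from which no edge of nonzero weight leaves the graph of $\rho(\alpha_\infty)$, so $\operatorname{span}\{u_b:b\in B\}$ is $\rho(\alpha_\infty)$-invariant. One then checks invariance under $\rho(\alpha_0)$ by choosing $B$ compatibly with the pairing $\{-i,i\}$, so that the off-diagonal contribution $u_{-i}$ coming from $\rho(\alpha_0)u_i$ remains in the span. Since the classical shadow is fixed under the $H^\ast$-action that flips signs of $z_m$, it suffices to handle one representative in each orbit of residues $k\pmod N$.

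For the irreducibility direction ($w=-2$), all coefficients in $\rho(\alpha_\infty)$ are nonzero, and $\rho(\alpha_0)$ has nontrivial Jordan blocks on each $E_i:=\operatorname{span}\{u_{-i},u_i\}$ for $i>0$, with $E_0=\operatorname{span}\{u_0\}$. Any invariant subspace $W$ decomposes as $W=\bigoplus_i W_i$ with $W_i\subseteq E_i$, and the requirement of $\rho(\alpha_0)$-invariance on each $E_i$ forces $W_i\in\{0,\operatorname{span}\{u_{-i}\},E_i\}$. Assuming $W\ne 0$, I would pick a basis vector $u_{-i}\in W$ and apply $\rho(\alpha_\infty)$ repeatedly, using that every edge in Figure~\ref{fig-GZ-odd} has nonzero weight, to produce in $W$ all remaining $u_j$ (with help from the $\rho(\alpha_0)$-action to separate $u_{-j}$ from $u_j$ in generalized eigenspaces), concluding $W=V$.

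The main obstacle I anticipate is organizing the case analysis for reducibility, since the correct subset $B$ depends on the residue of $k$ modulo $N$ and on which of the zero-loci above are hit. A secondary subtlety is the clamping $u^\ast$ at the extremes $i=\pm\bar N$, which creates self-loops in the graph and modifies the local structure both for detecting escape from $B$ (reducibility side) and for propagation across the graph (irreducibility side).
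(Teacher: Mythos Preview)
Your plan is essentially the paper's proof: locate the vanishing edges in the graph of Figure~\ref{fig-GZ-odd} for each residue of $k$, exhibit an explicit invariant $V_B$ when $k\not\equiv 0\pmod N$, and for $k\equiv 0$ propagate from an eigenvector of $\rho(\alpha_0)$ through the graph to obtain every basis vector. The paper organizes the reducible case by the symmetry $k\mapsto k+N$ (which only flips the sign of $\rho(\alpha_\infty)$) rather than the $H^\ast$-action you invoke, and then splits into $k=\pm1$ versus $1<|k|\le\bar N$; your outline will arrive at the same subspaces.

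Two small corrections you will encounter when executing the argument: the weight $q^{k-1}+q^{-k+1}$ on the edge $0\to-1$ never vanishes, since $n$ is odd and hence $-1$ is not a power of $q^2$; and at $k=0$ it is not true that \emph{all} coefficients are nonzero --- one has $s_{\bar N}=s'_{\bar N}=0$ --- but these are only the self-loop weights at $\pm\bar N$ and do not obstruct the connectivity/propagation step.
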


\begin{proof}
Observe that the representation depends on $k\bmod{2N}$, but replacing $k$ with $k+N$ has the effect of replacing $\rho(\alpha_\infty)$ with $\epsilon\rho(\alpha_\infty)$, which does not change reducibility. Therefore, we restrict to the range $\abs{k}\le\bar{N}$. We use the notation $V_B=\Span\{u_b\mid b\in B\}$ for $B\subset\{-\bar{N},\dotsc,\bar{N}\}$.

First, we deal with the special case $w=-q^2-q^{-2}$. If $k=1$, then the coefficient of $u_1$ in $\rho(u_0)$ vanishes. This shows that $B=\{-\bar{N},\dotsc,0\}$ defines a proper and nontrivial invariant subspace, so the representation is reducible. If $k=-1$, then $s_{-1}=0$. We can take $B=\{-\bar{N},\dotsc,-1\}$ to show the representation is reducible.

Next, suppose $1<\abs{k}\le\bar{N}$, so we still have $w\ne-2$. Let
\begin{equation*}
l=\begin{cases}
\bar{N}-\frac{\abs{k}}{2},&k\text{ is even},\\
\frac{\abs{k}-1}{2},&k\text{ is odd}.
\end{cases}
\end{equation*}
Note $0<l<\bar{N}$. Then
\begin{equation*}
s_ls_{-l-1}=\frac{q^{2k}+q^{-2k}-q^{4l+2}-q^{-4l-2}}{(q^{2l}-q^{-2l})(q^{-2l-2}-q^{2l+2})}=0.
\end{equation*}
If $s_l=0$, let $B=\{-\bar{N},\dotsc,l\}$. If $s_{-l-1}=0$ instead, let $B=\{-\bar{N},\dotsc,-l-1\}$. A direct check shows that $V_B$ as defined before is a proper and nontrivial invariant subspace.

Finally, suppose $k=0$, so $w=-2$. A quick calculation shows that $s_i\ne0$ for $i\ne0,\bar{N}$ and $s'_i\ne0$ for $0<i<\bar{N}$. Now we show that the representation is irreducible. Let $U$ be a nontrivial invariant subspace. Then the argument of Lemma~\ref{lemma-red} shows that all eigenvectors $u_{-\bar{N}},\dotsc,u_0$ are in $U$. Then
\begin{equation*}
u_1=\frac{\rho(\alpha_\infty)u_0-(q+q^{-1})u_{-1}}{(q^2-q^{-2})(q-q^{-1})}
\end{equation*}
is also in $U$. The remaining basis vectors follow inductively using
\begin{equation*}
s'_iu_{i+1}=\rho(\alpha_\infty)u_i-\beta_iu_{-i-1}+\beta_iu_{-i+1}-s'_{-i}u_{i-1},\quad i=1,\dotsc,\bar{N}-1.\qedhere
\end{equation*}
\end{proof}

\begin{remark}\label{rem-graph}
Similar to Remark~\ref{rem-red}, we include an illustration of the graph associated to $\rho(\alpha_\infty)$ for visualization of the proof. %with respect to the basis $\{u_i\}$
This is shown in Figure~\ref{fig-GZ-odd} using solid lines. The vertices of the top row correspond to the eigenvectors. The horizontal straight lines are $s_i$ and $s'_i$. The thick black line $0\to-1$ is the coefficient $q^{k-1}+q^{-k-1}\ne0$. The blue lines correspond to $\beta_i$. The dashed lines indicate the relation $\rho(\alpha_0-\lambda_i)u_i=u_{-i}$ for $0<i\le\bar{N}$. There are two self-loops at $\pm\bar{N}$ omitted.
%In Remark~\ref{rem-red}, we argued that the subset of vectors $u_i$ contained in an invariant subspace have no arrows pointing out of the subset. We can also use this to find explicit nontrivial invariant subspace by identifying the portion of the graph with the property above. For example, if $s_1=0$, then the arrows $1\to2$ and $N-1\to N-2$ have zero weight. By inspection, the subset $\{N-1,N,1,\dotsc,\frac{N-1}{2}\}$ satisfies the property above, so the corresponding vectors span an invariant subspace.
\end{remark}

%Formally, the subspaces of the second case correspond to $i=0$ in the third case. %In both cases, the two subspaces have total dimension $N$.

\subsection{Exceptional points when $n$ is even}

After possibly a diffeomorphism of $\ptorus$, the exceptional points are
\begin{enumerate}
\item $(-2,-2,-2,w)$ with $T_N(w)=2\epsilon^2$,
\item $(-2,-2,2,w)$ with $T_N(w)=-2\epsilon^2$, and
\item $(2,2,2,w)$ with $T_N(w)=-2\epsilon^2$
\end{enumerate}
by the proof of Lemma~\ref{lemma-except}.
%Recall $\epsilon^2=q^{2N^2}=(-1)^N$.

In the first case, $z_0=-2$ is still general, so we can construct a type-$0$ representation. Since
\begin{equation*}
f=\frac{2\epsilon^2T_N(w)+4}{z_0-2}=-2,\qquad
R=\left(\dfrac{T_N(w)+\epsilon^2z_0}{z_0-2}\right)^2=0,
\end{equation*}
\eqref{eq-m2-exist} has a unique solution $(x,y)=(0,0)$. $R=0$ also implies $r_i=0$ for some $i$. Then $r_{i+N}=0$ as well. By Lemma~\ref{lemma-red}, the representation is reducible.

Similarly, in the second case, write $w=-q^{4k+2}-q^{-4k-2}$, and take $\sigma=q^{-1}$. Since
\begin{equation*}
f=\frac{2\epsilon^2T_N(w)+4}{z_0-2}=0,\qquad
R=\left(\dfrac{T_N(w)+\epsilon^2z_0}{z_0-2}\right)^2=1,
\end{equation*}
\eqref{eq-m2-exist} has a unique solution $(x,y)=(1,1)$. This is achieved by
\begin{equation}
s_i=\frac{q^{2i-2k-1}-q^{-2i+2k+1}}{q^{2i-1}-q^{-2i+1}},\quad t_i=s_{-i},
\end{equation}
which can be verified using Lemma~\ref{lemma-Cheb-prod}. Note our choice of $\sigma$ implies $\lambda_i=\lambda_{-i+1}$. Let
\begin{equation}
u^\pm_i=v_i\pm v_{-i+1}=\pm u^\pm_{-i+1},\qquad V^\pm=\Span\{u^\pm_1,\dotsc,u^\pm_N\}.
\end{equation}
Then $u^\pm_i$ is an eigenvector of $\alpha_0$ with eigenvalue $\lambda_i$, and $\{u^\pm_i\}_{i=1}^N$ is an eigenvector basis of $V^\pm$ with distinct eigenvalues. A quick calculation shows
\begin{equation}
\rho(\alpha_\infty)u^\pm_i=s_iu^\pm_{i+1}+t_{i-1}u^\pm_{i-1},
\end{equation}
which shows that $V^\pm$ are invariant subspaces. (In fact, the argument of Lemma~\ref{lemma-red} shows that $V^\pm$ are irreducible.)

Finally, we deal with the case $(2,2,2,w)$, where we can write $w=-q^{4k+2}-q^{-4k-2}$.

\begin{proposition}\label{prop-except-even}
Let $V$ be a $2N$-dimensional vector space with a basis $\{u_i\mid i=-N,\dotsc,N-1\}$. Let $\lambda_i=q^{2i}+q^{-2i}$. For $i\ne-N,0$, let
\begin{equation}
s_i=\frac{q^{2i-2k}-q^{-2i+2k}}{q^{2i}-q^{-2i}},\qquad
s'_i=\frac{q^{2i+2}-q^{-2i-2}}{q^{2i}-q^{-2i}}s_i.\qquad
\beta_i=\frac{q^{2k}-q^{-2k}}{(q^{2i}-q^{-2i})^3}.
\end{equation}
Then the equations
\begin{align}
\rho(\alpha_0)u_i&=
\begin{cases}
\lambda_iu_i,&-N\le i\le 0,\\
\lambda_iu_i+u_{-i},&0<i<N,
\end{cases}\\
\rho(\alpha_\infty)u_i&=
\begin{cases}
(q^{2k}+q^{-2k})u_{-N+1}+(q^2-q^{-2})(q^{2k}-q^{-2k})u_{N-1},&i=-N,\\
s_iu_{i+1}+s_{-i}u_{i-1},&-N<i<0,\\
(q^{2k}+q^{-2k})u_{-1}-(q^2-q^{-2})(q^{2k}-q^{-2k})u_1,&i=0,\\
\beta_iu_{-i-1}-\beta_iu_{-i+1}+s'_{-i}u_{i-1}+s'_iu_{i+1},&0<i<N
\end{cases}
\end{align}
define a representation $\rho:\skein_q(\ptorus)\to\End(V)$ with classical shadow $(2,2,2,w)$ where $w=-q^{4k+2}-q^{-4k-2}$.
\end{proposition}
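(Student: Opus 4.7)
The plan is to carry out a direct verification parallel to Proposition~\ref{prop-except-odd} and Lemma~\ref{lemma-x1inf}, exploiting the block structure of the representation. Structurally, this is a Jordan-block deformation of a type-$0$ representation, designed to realize the exceptional character $(2,2,2,w)$ at which $\rho(\alpha_0)$ cannot be diagonalized. The vectors $u_i$ for $-N\le i\le 0$ are honest eigenvectors with eigenvalues $\lambda_i=q^{2i}+q^{-2i}$, while for $0<i<N$ the pair $(u_i,u_{-i})$ forms a size-$2$ Jordan block for the double eigenvalue $\lambda_i=\lambda_{-i}$; the two endpoints $i=-N$ and $i=0$ of the eigenvector chain are genuine eigenvectors (since $q^{4N}=1$ forces $\lambda_{-N}=\pm 2$ and $\lambda_0=2$) on which $\rho(\alpha_\infty)$ acts with the deliberately-chosen mixing coefficients $q^{2k}+q^{-2k}$ and $(q^2-q^{-2})(q^{2k}-q^{-2k})$, mirroring the combinatorial role of the central vertex in Figure~\ref{fig-GZ-odd}.

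First I would verify the defining relations \eqref{eq-rel1}--\eqref{eq-rel0}. Since $q$ is not a $4$th root of unity, relation \eqref{eq-rel1} determines $\rho(\alpha_1)$ explicitly from $\rho(\alpha_0)$ and $\rho(\alpha_\infty)$, and I would check \eqref{eq-relinf} and \eqref{eq-rel0} by applying them to each basis vector $u_j$ and matching coefficients, splitting by cases $j=-N$, $-N<j<0$, $j=0$, and $0<j<N$. The key algebraic input is that, with the implicit choice $\sigma=1$ and $w=-q^{4k+2}-q^{-4k-2}$, the product $s_i s_{-i}$ factors the type-$0$ scalar $r_i$, parallel to the proof of Lemma~\ref{lemma-STimg}. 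Once the relations hold, I would compute the classical shadow. For $\rho(A_0)=T_D(\rho(\alpha_0))$, each eigenvalue satisfies $T_D(\lambda_i)=q^{2iD}+q^{-2iD}=2$ because $q^{2D}=q^{4N}=1$, and on each Jordan block the nilpotent contribution is $T_D'(\lambda_i)u_{-i}=0$ since $\lambda_i$ is a double root of $T_D(x)-2$ for $0<i<N$; hence $\rho(A_0)=2$. For $\rho(A_\infty)$ and $\rho(A_1)$ I would adopt the associated-graph viewpoint of Definition~\ref{def-graph} and formula~\eqref{eq-matpow}, summing weights of length-$D$ closed walks based at each vertex. Finally $\rho(p)=w$ drops out of \eqref{eq-boundary} once the other three scalars are known.

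The main obstacle is the combinatorial bookkeeping for $\rho(A_\infty)$ and $\rho(A_1)$. Unlike the type-$0$ case of Lemma~\ref{lemma-x1inf}, where the graph of $\rho(\alpha_\infty)$ is a single directed cycle and exactly two length-$D$ walks contribute, here the $\beta_i$-edges introduce chord-like shortcuts between $u_i$ and $u_{-i\pm 1}$, and the boundary vertices $i=0,-N$ contribute nondiagonal jumps to both $u_{\pm 1}$ and $u_{-N\pm 1}$. The chosen values of $\beta_i$, $s'_i$, and the boundary coefficients must be shown to force all extra walk-contributions either to cancel pairwise or to assemble into the scalar $2$ at every vertex. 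Rather than grinding through cases, the cleanest route is probably to draw the analogue of Figure~\ref{fig-GZ-odd}, identify a minimal family of walks that generate all nontrivial contributions, and collapse it using identities among $s_i, s'_i, \beta_i$ that are in essence derivatives of the type-$0$ identities behind Lemma~\ref{lemma-STimg}. Once those combinatorial identities are in place, the verification of the algebra relations is a routine substitution and the classical shadow computation gives $\rho(A_1)=\rho(A_\infty)=2$ and $\rho(p)=w$ as required.
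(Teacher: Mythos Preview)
Your approach is correct in principle but takes a genuinely different route from the paper. The paper does not verify the relations or the classical shadow directly at the exceptional point. Instead, it starts from a one-parameter family of type-$0$ representations with $\sigma$ generic (using the specific choice $s_i=(q^{2i-2k}\sigma-q^{-2i+2k}\sigma^{-1})/\hat\lambda_i$, $t_i=(q^{2i+2k+2}\sigma-q^{-2i-2k-2}\sigma^{-1})/\hat\lambda_{i+1}$), for which the relations and the classical shadow $(z_0,z_0,2,-q^{4k+2}-q^{-4k-2})$ are already known from Proposition~\ref{prop-rep} and Lemma~\ref{lemma-x1inf}. It then performs the change of basis $u_i=v_i$ for $i\le 0$ and $u_i=(v_i-v_{-i})/(\lambda_i-\lambda_{-i})$ for $i>0$, and observes that in this basis every matrix entry extends continuously to $\sigma=1$; the explicit formulas in the proposition are precisely these limits (some via L'H\^opital). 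Since the defining relations of $\skein_q(\ptorus)$ are polynomial identities in the matrix entries, they survive the limit automatically, and the classical shadow is simply the limit $(2,2,2,w)$ of the known shadows. This completely sidesteps the walk-counting obstacle you flagged: there is no need to sum length-$D$ walks in the Jordan-block graph, because $\rho(A_\infty)=2$ and $\rho(A_1)=2$ are inherited by continuity from the generic case. Your direct verification would work, but the ``identities that are in essence derivatives of the type-$0$ identities'' you anticipate needing are exactly what the paper's limit argument packages for free; you would effectively be redoing the L'H\^opital computation by hand at every vertex.
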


Note the formula uses a non-existent vector $u_N$, but the coefficient $s'_{N-1}$ is $0$ by definition.

The construction of this representation is given in Appendix~\ref{sec-except-even}. Figure~\ref{fig-GZ-4} shows the associated graph of $\rho(\alpha_\infty)$ using the same conventions as Figure~\ref{fig-GZ-odd} and Remark~\ref{rem-graph}. (Note $s'_{-1}=s'_{N-1}=0$.)

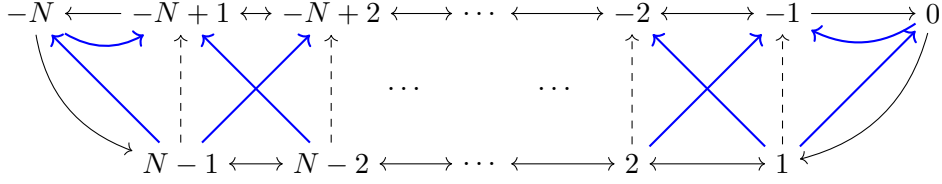
\begin{figure}
\centering
\begin{tikzpicture}[every loop/.style={}]
\node (2N) at (-1,1){$-N$};
\node (1) at (1,1){$-N+1$};\node (m1) at (1,-1){$N-1$};
\node (2) at (3,1){$-N+2$};\node (m2) at (3,-1){$N-2$};
\node (d) at (5,1){$\dots$};\node (md) at (5,-1){$\dots$};
\path (1) edge[<->] (2) edge[->] (2N);
\path (m1) edge[<->] (m2) edge[thick,blue,->] (2N);
\path (2) edge[<->] (d); \path (m2) edge[<->] (md);
\path[->,bend right] (2N) edge[thick,blue] (1) edge (m1);
\path[dashed,->] (m1) edge (1); \path[dashed,->] (m2) edge (2);
\path[thick,blue,->] (m1) edge (2);
\path[thick,blue,->] (m2) edge (1);
\node (hm1) at (7,1){$-2$};\node (hp1) at (7,-1){$2$};
\node (hm) at (9,1){$-1$};\node (hp) at (9,-1){$1$};
\path[<->] (hm1) edge (d) edge (hm);
\path[<->] (hp1) edge (md) edge (hp);
\draw (4,0)node{$\dots$} (6,0)node{$\dots$};
\node (N) at (11,1){$0$};
\path[->] (hp1) edge[thick,blue] (hm) edge[dashed] (hm1);
\path[->] (hp) edge[thick,blue] (hm1) edge[dashed] (hm);
\path[->,bend left] (N) edge[thick,blue] (hm) edge (hp);
\path[<-] (N) edge[thick,blue] (hp) edge (hm);
\end{tikzpicture}
\caption{The associated graph of $\rho(\alpha_\infty)$ when $n$ is even}\label{fig-GZ-4}
\end{figure}

\begin{lemma}\label{lemma-red-2e}
The representation above is reducible if and only if $w\ne2$.% the classical shadow is a singular point in $\centV$.
\end{lemma}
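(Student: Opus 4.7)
The plan is to adapt the strategy of Lemma~\ref{lemma-red-2o} to the even case. Since $n=2N$ is even and $n$ is the order of $q^2$, we have $q^{2N}=-1$. A direct check then shows that replacing $k$ by $k+N$ sends every coefficient appearing in $\rho(\alpha_\infty)$—the $s_i$, $s'_i$, $\beta_i$ and the cross coefficients $q^{2k}\pm q^{-2k}$—to its negative, so $\rho(\alpha_\infty)\to-\rho(\alpha_\infty)$, which preserves the set of invariant subspaces. Hence we may restrict to $k\in\{0,1,\ldots,N-1\}$. In this range, $w=2$ is equivalent to $q^{4k+2}=-1$, i.e.\ $2k+1\equiv N\pmod{2N}$, which forces $N$ odd and $k=(N-1)/2$. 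The overall architecture is to produce an explicit proper invariant subspace when $w\ne2$, and for $w=2$ to show every nonzero invariant subspace equals $V$.

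For the reducible direction, note that any $\rho(\alpha_0)$-invariant subspace is a direct sum of pieces in each (generalized) eigenspace of $\rho(\alpha_0)$; since the nilpotent part sends $u_j\mapsto u_{-j}\mapsto0$ for $0<j<N$, we look for invariants $V_B=\mathrm{span}\{u_b:b\in B\}$ with $B\subset\{-N,\ldots,N-1\}$ satisfying $j\in B,j>0\Rightarrow-j\in B$. Three cases are immediate: (i) when $k=0$, all $\beta_i=0$ and the cross coefficients $q^{2k}-q^{-2k}$ vanish, so the full eigenvector span $B=\{-N,\ldots,0\}$ is invariant; (ii) when $k=N-1$, one has $s_{-1}=s_{N-1}=0$, making $B=\{-N+1,\ldots,-1\}$ closed under $\rho(\alpha_\infty)$; (iii) when $N$ is even and $k=N/2$, both $s_{\pm N/2}=0$ force $\rho(\alpha_\infty)u_{-N/2}=0$, so $B=\{-N/2\}$ works. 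For the remaining $k$, the only vanishing $s$-coefficients in the eigenvector row are $s_k$ and $s_{k-N}$, and the hypothesis $w\ne2$ gives $k-N\ne-k-1$, so the broken edges fall at distinct positions. A strong-connectivity analysis of the bottom row of Figure~\ref{fig-GZ-4}, combined with the nonvanishing of the boundary coefficients $s_{\pm1},s_{\pm(N-1)}$ in this range, isolates an interval $B\subset\{-N+1,\ldots,-1\}$ on which the cross edges from $u_{-N},u_0$ and the $\beta_i$-edges from the generalized row do not force further extension of $B$.

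For $w=2$ (so $N$ odd and $k=(N-1)/2$), let $U$ be a nonzero invariant subspace. The eigenspace analysis above forces $U$ to contain some eigenvector $u_j$ with $j\in\{-N,\ldots,0\}$. Starting from $u_j$, we apply $\rho(\alpha_\infty)$ repeatedly and use $\rho(\alpha_0)$-invariance to extract individual basis components, exactly as in the $k=0$ portion of Lemma~\ref{lemma-red-2o}. The crucial point is that although $s_k=s_{-k-1}=0$ breaks the eigenvector row into two arcs, the $\beta_i$-edges from the generalized vertices back to eigenvectors, together with the cross edges from $u_{-N}$ and $u_0$, restore strong connectivity of the full graph in Figure~\ref{fig-GZ-4}; a finite induction then yields $u_i\in U$ for every $i$. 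The main obstacle is the catch-all step of the reducible direction: no single template for $B$ works uniformly across the residual values of $k$, and the precise subspace depends on the relative order of $k-N$ and $-k$ as well as on which of $s_{\pm1},s_{\pm(N-1)}$ happen to vanish, so the casework must be carried out explicitly—as illustrated by the differing dimensions of the invariants in the worked cases $N=4,k=1$ (dimension $7$) and $N=4,k=3$ (dimension $3$).
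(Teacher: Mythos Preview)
Your overall architecture matches the paper's, and the boundary cases $k=0$, $k=N-1$, $k=N/2$ are handled correctly. The genuine gap is in your ``catch-all'' step for the residual values of $k$. You assert that an interval $B\subset\{-N+1,\ldots,-1\}$ of eigenvectors is invariant, but this is false whenever $1\le k<(N-1)/2$. In the eigenvector row the forward edge $i\to i+1$ breaks only at $i=k-N$ and the backward edge $i\to i-1$ only at $i=-k$; an interval $\{a,\ldots,b\}$ is then closed under $\rho(\alpha_\infty)$ only if $a=-k$ and $b=k-N$, which requires $k\ge N/2$. For smaller $k$ no eigenvector interval is invariant, and your own worked example $N=4,\,k=1$ (invariant of dimension $7$) already contradicts the claim that $B$ lives inside a three-element set. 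The last paragraph of your proposal essentially concedes that this case is unresolved.

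The paper closes the gap by allowing generalized eigenvectors into $B$: for $1\le k\le N-2$ with $k\ne(N-1)/2$, one checks $s_k=s_{k-N}=0$ and hence $s'_k=s'_{k-N}=0$; then
\[
B=\begin{cases}
\{-N,\ldots,0,1,\ldots,k\}\cup\{N-k,\ldots,N-1\},&k<(N-1)/2,\\
\{-k,\ldots,-N+k\},&k>(N-1)/2,
\end{cases}
\]
is invariant. The second line is exactly the eigenvector interval you were reaching for; the first line is the piece your argument misses, and it accounts for the dimension $N+1+2k$ you observed at $N=4,\,k=1$. Your sketch of the irreducible direction ($N$ odd, $k=(N-1)/2$) is along the right lines but also needs the explicit bridging step: after obtaining one of the two eigenvector arcs and the adjacent generalized vectors, one uses a specific $\beta$-edge (e.g.\ $\beta_{k+1}\ne0$) to cross back into the other arc before finishing the induction.
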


\begin{proof}
Note replacing $k$ by $k+N$ changes $\rho(\alpha_\infty)$ to $-\rho(\alpha_\infty)$. %This is equivalent to replacing $u_i$ by $(-1)^iu_i$.
Thus, we can restrict to the range $0\le k\le N-1$. We use the notation $V_B=\Span\{u_b\mid b\in B\}$ for $B\subset\{-N\dotsc,N-1\}$. The proof is similar to Lemma~\ref{lemma-red-2o}.

First, suppose $k=0$. Then $q^{2k}-q^{-2k}=0$. This implies $B=\{-N,\dotsc,0\}$ defines a proper and nontrivial invariant subspace. Similarly, if $k=N-1$, then $s_{-1}=s_{N-1}=0$. We can take $B=\{-N+1,\dotsc,-1\}$.

Next, suppose $1\le k\le N-2$ and $k\ne\frac{N-1}{2}$. In this case, $s_{-N+k}=s_{k}=0$ and $s'_k=s'_{-N+k}=0$. If $k<\frac{N-1}{2}$, set $B=\{-N,\dotsc,0,1,\dotsc,k\}\cup\{N-k,\dotsc,N-1\}$. Otherwise, $k>\frac{N-1}{2}$, and set $B=\{-k,\dotsc,-N+k\}$. A direct check shows that $V_B$ is a proper and nontrivial invariant subspace.

In both cases above, $w\ne2$. The remaining case is when $N$ is odd $k=\frac{N-1}{2}$. Then $w=-q^{4k+2}-q^{-4k-2}=2$. It is easy to check that $s_i\ne0$ if $i\ne-N,0,-N+k,k$, $s'_i\ne0$ if $i\ne-N,0,-N+k,k,-1,N-1$, and $q^{2k}-q^{-2k}\ne0$, so $\beta_i\ne0$ if $i\ne-N,0$. Let $U$ be a nontrivial invariant subspace. Starting with an eigenvector of $\rho(\alpha_\infty)$ in $U$, the argument of Lemma~\ref{lemma-red} shows that at least one of $\{u_{-N},\dotsc,u_{-k-1}\}$ or $\{u_{-k},\dotsc,u_0\}$ is in $U$. Take the first possibility for example. The argument of Lemma~\ref{lemma-red-2o} shows that $U$ also contains $u_{N-1},\dotsc,u_{N-k}=u_{k+1}$. Then use
\begin{equation*}
\beta_{k+1}u_{-k}=\beta_{k+1}u_{-k-2}+s'_{k+1}u_{k+2}-\rho(\alpha_\infty)u_{k+1}
\end{equation*}
to show that $u_{-k}\in U$. Then apply the argument of Lemma~\ref{lemma-red} again to show that the rest of the eigenvectors are in $U$, and we can use the inductive argument of Lemma~\ref{lemma-red-2o} again to obtain the remaining vectors. Thus, the representation is irreducible.
\end{proof}

\section{Skein algebra of the four-punctured sphere}

The next simplest surface is the four-punctured sphere $\hsphere$. The skein algebra $\skein_q(\hsphere)$ has a presentation similar to $\ptorus$. However, the bigger homology of $\hsphere$ makes the algebra more complicated. We also determine the singularities and exceptional points, just like the one-punctured torus case.

Many results in this section and the next one are results of brute force calculation. We often omit the details of the proofs.

\subsection{Presentation}

%Another basic building piece of surfaces is the four-punctured sphere $\hsphere$.
There is a parameterization of simple closed curves on $\hsphere$ by slopes $m=a/b$. Represent $\hsphere$ as in Figure~\ref{fig-puncture-a-b}. Choose the curves $\alpha_0,\alpha_\infty$, and number the punctures as in the figure. The curve surrounding puncture $i$ is denoted by $p_i$. To obtain the slope-$(a/b)$ curve $\alpha_{a/b}$ on $\hsphere$, take $\abs{a}$ parallel copies of $\alpha_\infty$ and $\abs{b}$ parallel copies of $\alpha_0$, and resolve each intersection such that one would turn left from $\alpha_0$ to $\alpha_\infty$ if $ab>0$ and turn right if $ab<0$. The curve $\alpha_1$ is demonstrated in Figure~\ref{fig-curve-b1-on-S0}.

\begin{figure}
\centering
\begin{subfigure}[t]{0.4\linewidth}
\centering
\begin{tikzpicture}
\draw[fill=gray!20!white,dotted] (0,0)ellipse[x radius=2,y radius=1];
\fill[black] (-1,0)circle[radius=0.05]
	(0,0)circle[radius=0.05]
	(1,0)circle[radius=0.05];
\draw (-1,0)node[below]{1} (0,0)node[below]{2} (1,0)node[below]{3};
\draw (-2,-0.5)node{4};
\draw (-0.5,0)ellipse[x radius=0.9,y radius=0.6];
\draw (0.5,0)ellipse[x radius=0.9,y radius=0.6];
\draw (-1.4,0)node[left]{$\alpha_0$} (1.4,0)node[right]{$\alpha_\infty$};
\end{tikzpicture}
\subcaption{$\alpha_0$, $\alpha_\infty$, and puncture numbering}
\label{fig-puncture-a-b}
\end{subfigure}
\quad
\begin{subfigure}[t]{0.4\linewidth}
\centering
\begin{tikzpicture}
\draw[fill=gray!20!white,dotted] (0,0)ellipse[x radius=2,y radius=1];
\fill[black] (-1,0)circle[radius=0.05]
	(0,0)circle[radius=0.05]
	(1,0)circle[radius=0.05];
\draw (0.86,-0.14) to[closed,
	curve through={(1.2,-0.2)..(1.14,0.14)..(0.5,0.7)
	..(0,0.8)..(-0.5,0.7)..(-1.14,0.14)..(-1.2,-0.2)
	..(-0.86,-0.14)}] (0,0.4);
\end{tikzpicture}
\subcaption{The curve $\alpha_1$}
\label{fig-curve-b1-on-S0}
\end{subfigure}
\caption{Curves on $\hsphere$}
\label{fig-curves-on-S04}
\end{figure}

$\skein_q(\hsphere)$ also has a presentation given in \cite{BP}. Define the following elements of $\skein_q(\hsphere)$
\begin{gather}
\gamma=p_1p_2p_3p_4+p_1^2+p_2^2+p_3^2+p_4^2,\\
c_0=p_1p_2+p_3p_4,\quad
c_1=p_1p_3+p_2p_4,\quad
c_\infty=p_1p_4+p_2p_3.
\end{gather}
More generally, for a slope $m$, if $\alpha_m$ separates the peripheral curves in the same pattern as $\alpha_0$, $\alpha_1$, or $\alpha_\infty$, then $c_m$ is defined as the corresponding quantity above.

Define a function
\begin{equation}
H(x,y,z;a,b,c,g)=x^2+y^2+z^2-xzy+ax+by+cz+g,
\end{equation}
where the variables $x,y,z$ are not necessarily commutative. Then $\skein_q(\hsphere)$ is generated over $\cx[p_1,p_2,p_3,p_4]$ by $\alpha_0,\alpha_1,\alpha_\infty$ with relations
\begin{align}
H(q^2\alpha_0,q^2\alpha_1,q^{-2}\alpha_\infty&;c_0,c_1,c_\infty,\gamma)=(q^2+q^{-2})^2,\label{eq-relG-gen}\\
q^2\alpha_0\alpha_\infty-q^{-2}\alpha_\infty\alpha_0&=(q^4-q^{-4})\alpha_1+(q^2-q^{-2})c_1,\label{eq-rel04-1}\\
q^2\alpha_1\alpha_0-q^{-2}\alpha_0\alpha_1&=(q^4-q^{-4})\alpha_\infty+(q^2-q^{-2})c_\infty,\label{eq-rel04-inf}\\
q^2\alpha_\infty\alpha_1-q^{-2}\alpha_1\alpha_\infty&=(q^4-q^{-4})\alpha_0+(q^2-q^{-2})c_0.\label{eq-rel04-0}
\end{align}
If $q$ is not an 8th root of unity, then $\alpha_1$ is redundant.

%There is a permutation symmetry of the presentation by the alternating group $A_4$. The action on the peripheral curves $p_1,\dotsc,p_4$ is the obvious one. This determines a permutation of $c_0,c_1,c_\infty$, and the action on $\alpha_0,\alpha_1,\alpha_\infty$ follows the same pattern. To show that this is indeed a symmetry, we can realize the action as a mapping class of the surface, or check the presentation by brute force. Note that the symmetry is the full permutation group $S_4$ if $q^4=1$.

\subsection{Center at a root of unity}

Now let $q$ be a root of unity. By \eqref{eq-PIdeg}, the PI degree of $\skein_q(\hsphere)$ is $D=N=\order(q^4)$.

To avoid confusion, elements of $\skein_\epsilon(\hsphere)$ are denoted with a hat. Define $A_m=\epsilon^2T_N(\alpha_m)=\epsilon^2\Ch(\hat{\alpha}_m)\in Z$. Note the inclusion of the sign $\epsilon^2=\pm1$ in this case (compare Theorem~\ref{thm-even-iso}). In addition, let $P_i=\Ch(\hat{p}_i)=T_N(p_i)$. Then $C_m=\Ch(\hat{c}_m)$ and $\Gamma=\Ch(\hat\gamma)$ are given by similar expressions as $c_m$ and $\gamma$ with $p_i$ replaced by $P_i$.

Every diagram is even on $\hsphere$, so for all roots of unity, $X=\skein_\epsilon(\hsphere)$. Since $\epsilon^2=\pm1$, \eqref{eq-rel04-1}--\eqref{eq-rel04-0} reduces to commutativity for $X$, so only \eqref{eq-relG-gen} is necessary. The presentation of the center $Z$ can then be obtained by applying $\Ch$ to \eqref{eq-relG-gen}. The generators are $A_0,A_1,A_\infty,p_1,p_2,p_3,p_4$, and there is one relation
\begin{equation}\label{eq-relG}
H(A_0,A_1,A_\infty;C_0,C_1,C_\infty,\Gamma)=(\epsilon^2+\epsilon^{-2})^2=4.
\end{equation}

\subsection{Exceptional points}

Like the one-punctured torus case, we say $z\in\centV$ or $\charV$ is \term{exceptional} if $A_m(z)=\pm2$ for all slopes $m$.

\begin{lemma}\label{lemma-except-04}
There are finitely many exceptional points in $\charV$ (and in $\centV$ as a result). They are the characters of the following representations $r:\pi_1(\hsphere)\to SL_2$.
\begin{enumerate}
\item The image of $r$ is central $\{\pm I\}$.
\item The image of $r$ is a cyclic subgroup of $SL_2$ of order $4$.
\item Up to permutations of the punctures, $r$ has the following form
\begin{equation}\label{eq-allpm2}
r(p_1)=\pm I,\quad r(p_2)=\pm\begin{pmatrix}1&2\\0&1\end{pmatrix},\quad
r(p_3)=\pm\begin{pmatrix}1&0\\-2&1\end{pmatrix}.
\end{equation}
\end{enumerate}
\end{lemma}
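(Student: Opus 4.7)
The plan is to translate ``$z$ exceptional'' into a condition on a representative $r: \pi_1(\hsphere) \to SL_2$, verify the listed families satisfy it, and then show these exhaust the possibilities. Via Theorem~\ref{thm-even-iso} and the identification $\charV \cong \chi(\hsphere)$, the function $A_m$ corresponds to $-\tr(r(\alpha_m))$, so exceptionality is equivalent to $\tr(r(\alpha_m)) \in \{+2,-2\}$ for every slope $m$, i.e.\ every $r(\alpha_m)$ is either $\pm I$ or parabolic. Finiteness in $\centV$ will follow from finiteness in $\charV$ via the finite branched covering $\centV \to \charV$ of Theorem~\ref{thm-skein-center}(2), so it suffices to work in $\charV$.

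For the easy direction $(\Leftarrow)$, family~(1) is immediate. For family~(2), the image is abelian, so $r$ factors through $H_1(\hsphere;\ints)$; every simple closed curve is homologous to some $[p_i]+[p_j]$, so only the three pairings need checking, and the hypothesis that the image is $C_4$ (cyclic of order $4$) forces each pairwise exponent sum to be even, giving $r(\alpha_m)\in\{\pm I\}$ for all $m$. For family~(3), the matrices of \eqref{eq-allpm2} lie in the principal congruence subgroup $\Gamma(2)\subset SL_2(\ints)$, and under the classical uniformization of the relevant orbifold by $\mathbb{H}/\Gamma(2)$, each simple-closed-curve word in the $p_i$'s evaluates to a cusp-stabilizing parabolic of $\Gamma(2)$, hence has trace $\pm 2$.

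For the hard direction $(\Rightarrow)$, let $a=r(\alpha_0)$, $b=r(\alpha_\infty)$; both have trace $\pm 2$, so each is $\pm I$ or parabolic. Conjugate $r$ to put $a$ in standard form. The trace identity $\tr(AB)+\tr(AB^{-1})=\tr(A)\tr(B)$ applied to the sequence of slope-$k$ curves produced by iterating the Dehn twist along $\alpha_0$ yields a linear recurrence $x_{k+1}=\tr(a)\,x_k-x_{k-1}$ for $x_k=\tr(r(\alpha_k))$. Requiring $x_k\in\{\pm 2\}$ for all $k$, together with the parallel recurrences coming from Dehn twists along $\alpha_1$ and $\alpha_\infty$, and the boundary relation $p_1p_2p_3p_4=1$, constrains $r$ strongly. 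A case split on whether each of $a,b$ is $\pm I$ or genuinely parabolic then sorts $r$ into the three listed families, and the finitely many resulting conjugacy classes give the finite set of exceptional characters.

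The main obstacle will be the combinatorial case analysis in the parabolic subcase. When $a$ and $b$ are both nontrivially parabolic with distinct fixed points on $\mathbb{P}^1$, the group they generate is non-elementary (Schottky-like), and short words in them produce traces of absolute value $>2$, contradicting exceptionality; hence their fixed points must coincide, and after normalization $r$ takes the upper-triangular form of family~(3). When instead at least one of $a,b$ equals $\pm I$, further subdivision by the traces $\tr(r(p_i))$ leads to families~(1) and~(2). Enumerating how $r(p_1),\dots,r(p_4)$ can specialize up to sign and permutation of the punctures, and matching each subcase to exactly one entry of the statement without double counting, is where the bookkeeping will be most delicate.
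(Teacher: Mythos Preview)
Your $\Rightarrow$ argument contains a genuine error. You claim that when $a=r(\alpha_0)$ and $b=r(\alpha_\infty)$ are both nontrivially parabolic with \emph{distinct} fixed points, short words produce traces of absolute value $>2$, forcing the fixed points to coincide and $r$ to become upper-triangular. But family~(3) is precisely a counterexample: with $r(p_1)=-I$ and the plus signs in \eqref{eq-allpm2}, one computes $r(\alpha_0)=-\begin{pmatrix}1&2\\0&1\end{pmatrix}$ (fixed point $\infty$) and $r(\alpha_\infty)=-\begin{pmatrix}-3&2\\-2&1\end{pmatrix}$ (fixed point $1$). These are parabolics with distinct fixed points, the group they generate is the non-elementary $\Gamma(2)$, and yet every $r(\alpha_m)$ has trace $\pm2$. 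Your own $\Leftarrow$ argument for family~(3) explains why: exceptionality constrains only \emph{simple-closed-curve} words, and in $\Gamma(2)$ those are exactly the cusp-stabilizers. So the dichotomy you rely on (coincident fixed points versus growing traces) fails for the relevant class of words, and the conclusion that $r$ is upper-triangular is false --- family~(3) is irreducible.

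This breaks your case analysis at its key step. The paper avoids this trap by working on the skein side: it uses the product relation $A_{a/b}A_{c/d}=A_{(a+c)/(b+d)}+A_{(a-c)/(b-d)}+C_{(a+c)/(b+d)}$ for a handful of specific slope pairs, runs an exhaustive search over the $\pm2$ possibilities to pin down finitely many candidates for $(C_0,C_1,C_\infty,\Gamma)$, solves for $\vec W$, and then reduces by symmetry. For the $\Leftarrow$ direction in case~(3), it verifies exceptionality by induction on the braid group $B_3$ action rather than invoking uniformization. If you want to repair your approach, you need to replace the ``distinct fixed points $\Rightarrow$ trace blowup'' step with an argument that genuinely uses the simple-closed-curve restriction --- e.g.\ by tracking how Dehn twists act on the $p_i$'s and showing the resulting constraints on $\tr r(p_i)$ and $\tr r(p_ip_j)$ force one $r(p_i)$ to be $\pm I$, which then reduces to the $\Gamma(2)$ picture on the remaining three punctures.
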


Here, $p_1,\dotsc,p_4\in\pi_1(\hsphere)$ are chosen such that $p_1p_2=\alpha_0$, $p_2p_3=\alpha_\infty$, $p_1p_3=\alpha_1$, $p_1p_2p_3p_4=1$.

\begin{proof}
By direct calculation using the defining relations of $\skein_\epsilon(\surface)$,
\begin{equation}
\hat\alpha_0\hat\alpha_\infty=\epsilon^2\hat\alpha_1+\epsilon^2\hat\alpha_{-1}+\hat{c}_1.
\end{equation}
Applying $\Ch$, we get $A_0A_\infty=A_1+A_{-1}+C_1$. Then using diffeomorphisms of the surface, if $\abs{ad-bc}=1$,
\begin{equation}
A_{\frac{a}{b}}A_{\frac{c}{d}}=A_{\frac{a+c}{b+d}}+A_{\frac{a-c}{b-d}}+c_{\frac{a+c}{b+d}}.
\end{equation}
By choosing $(a/b,c/d)=(0,1),(0,-1),(0,1/2),(0,\infty),(1,\infty),(2,\infty)$ and requiring all $A_m$ appeared to be $\pm2$, we obtain finitely many possibilities for $(C_0,C_1,C_\infty,\Gamma)$. This can be done easily with an exhaustive search using a computer program. Then we can solve for $\vec{W}$ to obtain a finite list of candidates of exceptional points.

To reduce the workload, we use symmetries of $\charV$. Diffeomorphisms of $\hsphere$ acts on $\pi_1(\hsphere)$, which induces an action on $\charV$. This action can permute punctures, and every even permutation can be realized as a diffeomorphism that permutes the free homotopy classes of $\alpha_0,\alpha_1,\alpha_\infty$. In addition, a cohomology class $h\in H^\ast$ acts on $r:\pi_1(\hsphere)\to SL_2$ by $(h\cdot r)(\alpha)=(-1)^{h(\alpha)}r(\alpha)$. This action changes the signs of the coordinates. Up to these symmetries, the candidate exceptional points are $z=(z_0,z_1,z_\infty,\vec{W})\in\charV$ with
\begin{enumerate}
\item $z_0=z_1=z_\infty=-2$, $W_1=W_2=W_3=W_4=-2$.
\item $z_0=z_1=z_\infty=2$, $W_1=W_2=W_3=W_4=0$,
\item $z_0=z_1=z_\infty=2$, $W_1=2$, $W_2=W_3=W_4=-2$.
\end{enumerate}
It is easy to verify that they are the characters of the 3 possibilities in the lemma for a suitable choice of signs.
%The first point is the character of the trivial representation, so $A_m(z)=-2$ for all slopes $m$. The second point sends all $p_i$ to $\begin{pmatrix}0&1\\-1&0\end{pmatrix}$. Then every simple closed curve is sent to $-I$ at this point. Therefore, $A_m(z)=2$ for all slopes $m$.

We need to show that these points are actually exceptional. The first two are obvious, so we focus on the last one. We fix the signs of $r$ to be $r(p_1)=-I$ and $r(p_2),r(p_3)$ take the plus sign in \eqref{eq-allpm2}. Recall there is a braid group $B_3$ embedded in the mapping classes group of $\hsphere$. Using the standard generators of the braid group, it is easy to check by induction that for any $b\in B_3$, one of $r(b(p_i)),i=1,2,3$ is $-I$, the other two have trace $2$, and the product of any two has trace $-2$. This implies $A_m(z)=2$ for all slopes $m$.
\end{proof}

\subsection{Singularities}

In this section we describe the singularities of $\centV$ and the slices $\centV_{\vec{w}}$. For $\vec{w}\in\cx^4$, recall the notation $W_i=T_N(w_i)$ and $\vec{W}=(W_1,\dotsc,W_4)$.

\begin{theorem}[{\cite[Theorem~3.7]{CL}}]\label{thm-sing-sl}
A point is singular in the slice $\centV_{\vec{w}}\cong\charV_{\vec{W}}$ if and only if it is a character of one of the following.
\begin{enumerate}
\item A representation $r:\pi_1(\hsphere)\to SL_2$ with $r(p_i)=\pm I$ for some $i\in\{1,2,3,4\}$.
\item A reducible representation $\pi_1(\hsphere)\to SL_2$.
\end{enumerate}
\end{theorem}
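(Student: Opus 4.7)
The plan is to present $\charV_{\vec{W}}$ as the affine cubic surface in $\cx^3$ cut out by $H(A_0,A_1,A_\infty;C_0,C_1,C_\infty,\Gamma)=4$, where the scalars $C_0,C_1,C_\infty,\Gamma$ are determined by $\vec{W}$. Since this is a hypersurface, singular points of the slice are precisely the common zeros of the gradient
\begin{equation*}
2A_0 - A_1 A_\infty + C_0 = 0, \quad 2A_1 - A_0 A_\infty + C_1 = 0, \quad 2A_\infty - A_0 A_1 + C_\infty = 0,
\end{equation*}
subject to $H=4$. The task reduces to matching solutions of this polynomial system with conjugacy classes of the two special families of representations in the statement.

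I would first dispatch the easy direction, that a special $r$ yields a singular character. For a reducible $r$, conjugate into upper triangular form so that $r(p_i)=\bigl(\begin{smallmatrix}\lambda_i & \ast\\ 0 & \lambda_i^{-1}\end{smallmatrix}\bigr)$ with $\prod\lambda_i=1$. The traces $A_0,A_1,A_\infty$ then factor as symmetric polynomials in the $\lambda_i$'s, and a direct substitution shows that $H-4$ itself factors on this locus as a product, exhibiting the reducible locus as a union of curves on which the gradient vanishes. For a representation with, say, $r(p_4)=\pm I$, the relation $p_1p_2p_3p_4=1$ forces $r$ to factor through a two-generator group with an additional central constraint, and a short computation produces an extra polynomial identity among $A_0, A_1, A_\infty$ beyond $H=4$; comparing with the gradient system verifies the vanishing.

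For the converse, given a singular character I would invoke the classical Fricke reconstruction to produce a representation $r:\pi_1(\hsphere)\to SL_2$ realizing the prescribed traces and boundary traces, which is unique up to conjugation when $r$ is irreducible. Then by Goldman-style deformation theory, the Zariski tangent space to $\charV_{\vec{W}}$ at the character of $r$ coincides with the expected dimension (namely $2$) precisely when $r$ is irreducible and no $r(p_i)$ equals $\pm I$; any failure of these conditions introduces an extra infinitesimal deformation (coming from the normalizer of a torus in $SL_2$ in the reducible case, or from the centralizer of $\pm I$ at a puncture in the other case) that enlarges the Zariski tangent space. Taking the contrapositive gives the required characterization of singular points.

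The main obstacle is the careful bookkeeping of the slicing: Goldman's cohomological formulas are naturally adapted to the full character variety, but here the four boundary traces are fixed, so one must restrict to a symplectic leaf and track how the degeneracies at reducible or boundary-central representations survive on the slice. An alternative, more elementary route would be to eliminate variables directly in the polynomial system $\nabla H=0$, $H=4$, classify the solution components as $\vec{W}$ varies, and then verify case by case via Fricke's explicit inverse formulas that each component is the character of a reducible representation or a representation with $r(p_i)=\pm I$; this sidesteps deformation theory but entails a lengthy case analysis.
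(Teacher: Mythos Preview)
The paper does not prove this statement; it is quoted from \cite{CL} without argument. The subsequent Lemma~\ref{lemma-sing-defeq} translates the two types of singularities into explicit polynomial conditions, but its proof of part~(2) likewise defers to \cite{BG0}. So there is no in-paper proof to compare your proposal against.

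Your outline is a reasonable sketch of how such a result is established in the literature. The hypersurface presentation and the gradient criterion are correct, and the easy direction is indeed a direct computation once the trace coordinates are written out. For the converse, the deformation-theoretic route is standard, but the assertion that the Zariski tangent space has the expected dimension precisely when $r$ is irreducible with no boundary-central generator hides most of the work: matching the parabolic cohomology $H^1(\pi_1,\mathfrak{sl}_2)$ (relative to fixed boundary conjugacy classes) to the Zariski tangent space of the affine GIT quotient requires the orbit through $r$ to be closed with finite stabilizer, and showing that the quotient is \emph{genuinely} singular (not merely that cohomology jumps) when these fail needs a separate local-model or Luna-slice argument. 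Your alternative elimination route is closer in spirit to what the paper actually does in neighboring computations (Gr\"obner bases in the proof of Lemma~\ref{lemma-sing}), and would be self-contained if carried through.
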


Define
\begin{equation}
\kappa(a,b,c)=a^2+b^2+c^2+abc-4.
\end{equation}

\begin{lemma}\label{lemma-sing-defeq}
Let $z=(z_0,z_1,z_\infty)\in\centV_{\vec{w}}\cong\charV_{\vec{W}}$.
\begin{enumerate}
\item For $i\in\{1,2,3,4\}$, $z$ is the character of $r:\pi_1(\hsphere)\to SL_2$ such that $r(p_i)=\pm I$ if and only if $W_i=\pm2$ and $z_m=-\frac{W_i}{2}W_{j_m}$ for $m\in\{0,1,\infty\}$, where $j_m$ is the puncture on the same side of $\alpha_m$ as $i$.
\item $z$ is the character of a reducible $r:\pi_1(\hsphere)\to SL_2$ if and only if
\begin{equation}
\left\{
\begin{alignedat}{3}
&\kappa(W_1,W_2,z_0)&&=\kappa(W_3,W_4,z_0)&&=0,\\
&\kappa(W_1,W_3,z_1)&&=\kappa(W_2,W_4,z_1)&&=0,\\
&\kappa(W_1,W_4,z_\infty)&&=\kappa(W_2,W_3,z_\infty)&&=0.
\end{alignedat}
\right.
\end{equation}
\end{enumerate}
\end{lemma}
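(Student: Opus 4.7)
The plan is to translate the abstract characterization of singular points in Theorem~\ref{thm-sing-sl} into explicit polynomial equations using the trace correspondence $z_m = -\tr r(\alpha_m)$, $W_i = -\tr r(p_i)$ that underlies the isomorphism $X_{\vec{W}} \cong \skein_{-1}(\hsphere)/\langle\hat{p}_i = W_i\rangle$.

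For part (1), the key geometric input is that on $\hsphere$, the simple closed curve $\alpha_m$ is freely homotopic to the product $p_i p_{j_m}$ of the two peripheral curves on one side of it. If $r(p_i) = \epsilon I$ with $\epsilon = \pm 1$, then $r(\alpha_m) = \epsilon\, r(p_{j_m})$, so taking negated traces gives $z_m = -\epsilon W_{j_m}$; combined with $W_i = -2\epsilon$ this rearranges to $z_m = -\frac{W_i}{2} W_{j_m}$. Conversely, if $W_i = \pm 2$ and the $z_m$ formulas hold, define $r(p_i) = \mp I$ and choose $r(p_j)$ (for $j \ne i$) with the prescribed traces and with $\prod_j r(p_j) = I$; the resulting character has the given coordinates, verifying the equations pick out exactly the characters in Theorem~\ref{thm-sing-sl}(1).

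For part (2), the main tool is the classical Fricke trace identity, which under the skein sign convention takes the form
\[
\tr([A,B]) - 2 = \kappa(-\tr A,\; -\tr B,\; -\tr AB).
\]
Thus $\kappa(-\tr A,-\tr B,-\tr AB)=0$ is equivalent to $A$ and $B$ sharing an eigenvector. The $(\Rightarrow)$ direction is then immediate: if $r$ is reducible we may conjugate all $r(p_i)$ simultaneously into the upper-triangular Borel, so every pair shares an eigenvector, yielding the six $\kappa$ equations from the six pairs $\{p_i,p_j\}$ whose product (up to inverse, which has the same trace) is one of $\alpha_0,\alpha_1,\alpha_\infty$. For $(\Leftarrow)$ I would explicitly construct a reducible representation: choose $\lambda_i \in \cx^\times$ with $\lambda_i + \lambda_i^{-1} = -W_i$ (each $\lambda_i$ determined up to inversion), fix signs so that $\lambda_1\lambda_2\lambda_3\lambda_4 = 1$ as required by $p_1p_2p_3p_4 = 1$, and use the equation $\kappa(W_1,W_2,z_0)=0$ to pin $z_0$ to $-\lambda_1\lambda_2 - (\lambda_1\lambda_2)^{-1}$ after possibly inverting $\lambda_2$; the analogous equations for $z_1, z_\infty$ fix the signs of $\lambda_3,\lambda_4$. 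Then the upper-triangular representation with diagonals $(\lambda_i,\lambda_i^{-1})$ and generic off-diagonal entries is reducible and has the prescribed character.

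The main obstacle is the consistency check in the $(\Leftarrow)$ direction of part~(2): we are given six $\kappa$ equations but only four sign choices for the $\lambda_i$ (and one global scaling freedom is killed by $\prod\lambda_i=1$), so we must verify that fixing the signs using, say, the first equation of each pair automatically forces the second equation of that pair to hold. The key point is that once $\prod\lambda_i=1$ is enforced, $\lambda_3\lambda_4 = (\lambda_1\lambda_2)^{-1}$, so $-\lambda_3\lambda_4 - (\lambda_3\lambda_4)^{-1} = -\lambda_1\lambda_2 - (\lambda_1\lambda_2)^{-1} = z_0$, making $\kappa(W_3,W_4,z_0)=0$ automatic from $\kappa(W_1,W_2,z_0)=0$; analogous pairings handle $z_1$ and $z_\infty$. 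Degenerate cases where some $W_i = \pm 2$ need separate treatment, but then $\lambda_i = \mp 1$ is forced and the representation effectively collapses into the situation of part (1), which has already been verified.
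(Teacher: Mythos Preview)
Your treatment of part~(1) is correct and in fact more direct than the paper's: you build a representation with $r(p_i)=\pm I$ from scratch, whereas the paper starts from an arbitrary $r$ with the given character, writes $r(p_i)=s\begin{psmallmatrix}1&k\\0&1\end{psmallmatrix}$, and shows via a trace computation that either $k=0$ or the representation is upper triangular (hence degenerates to one with $r(p_i)=\pm I$). Both arguments are valid; yours is shorter, the paper's gives slightly more, namely that \emph{every} $r$ with character $z$ lies in the closure of the orbit of such a representation.

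For part~(2), your $(\Rightarrow)$ via the Fricke identity $\tr[A,B]-2=\kappa(-\tr A,-\tr B,-\tr AB)$ is correct and matches the classical argument behind the reference \cite{BG0} that the paper cites. The gap is in your $(\Leftarrow)$. Your paragraph on the ``main obstacle'' has the logic inverted: you write that once $\prod\lambda_i=1$ is enforced, $\kappa(W_3,W_4,z_0)=0$ follows from $\kappa(W_1,W_2,z_0)=0$. But $\kappa(W_3,W_4,z_0)=0$ is a \emph{hypothesis}, not something to be derived; what actually needs to be shown is that $\prod\lambda_i=1$ can be arranged \emph{simultaneously} with the three conditions $z_m=-\lambda_1\lambda_{j_m}-(\lambda_1\lambda_{j_m})^{-1}$. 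Fixing $\lambda_2,\lambda_3,\lambda_4$ relative to $\lambda_1$ via three of the $\kappa$-equations uses up all your sign freedom, and the remaining three $\kappa$-equations only tell you that $\lambda_1\lambda_2\in\{\lambda_3\lambda_4,(\lambda_3\lambda_4)^{-1},\lambda_3\lambda_4^{-1},\lambda_3^{-1}\lambda_4\}$, etc.; the option $\lambda_1\lambda_2=\lambda_3\lambda_4$ would give $\prod\lambda_i=(\lambda_1\lambda_2)^2\neq1$ in general. Ruling out the bad options requires an additional ingredient---either a careful case analysis combining all three ``second'' $\kappa$-equations at once, or invoking the character-variety relation~\eqref{eq-relG}, which you never use. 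The paper sidesteps this by citing \cite{BG0}; if you want a self-contained argument, this is where the real work lies.
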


\begin{proof}
(1) $\Rightarrow$ is clear. For $\Leftarrow$, without loss of generality, assume $i=1$. Up to conjugation, we can write $r(p_1)=s\begin{pmatrix}1&k\\0&1\end{pmatrix}$ where $s=-\frac{W_1}{2}$. Let $r(p_2)=\begin{pmatrix}a&b\\c&d\end{pmatrix}$. Then $r(\alpha_0)=r(p_1p_2)=s\begin{pmatrix}a+kc&b+kd\\c&d\end{pmatrix}$, so $z_0=-s(a+d+kc)=-s(-W_2+kc)$. By assumption, $z_0=-\frac{W_1}{2}W_{j_0}=sW_2$, so $kc=0$. If $k=0$, then $r$ is the representation we want. Otherwise, $c=0$. The same argument shows that $r(p_j)$ is upper triangular for all $j$. A standard calculation shows that conjugates of $r$ have a limit point where $p_1$ is sent to $\pm I$.

(2) See \cite{BG0}. Note our identification of $\charV$ with the character variety has a minus sign from \eqref{eq-charV-tr}.
\end{proof}

\begin{theorem}\label{thm-sing}
Let $z\in\centV_{\vec{w}}\subset\centV$. Then $z$ is singular in $\centV$ if and only if one of the following holds.
\begin{enumerate}
\item There exists $i\in\{1,2,3,4\}$ such that $z$ is a character of a representation $r:\pi_1(\hsphere)\to SL_2$ with $r(p_i)=\pm I$ and $w_i\ne\pm2$.
\item $z$ is a character of a reducible representation $r:\pi_1(\hsphere)\to SL_2$.
\end{enumerate}
\end{theorem}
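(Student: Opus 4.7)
Realize $\centV$ as the hypersurface $\{F=0\}\subset\cx^7$ with coordinates $(z_0,z_1,z_\infty,p_1,\dots,p_4)$, where $F=H(z_0,z_1,z_\infty;C_0,C_1,C_\infty,\Gamma)-4$ and $C_m,\Gamma$ are the standard polynomials in $P_j=T_N(p_j)$; similarly present $\charV$ via $W_j$ in place of $p_j$. Since $P_j$ depends only on $p_j$, the chain rule gives
\[\frac{\partial F}{\partial p_j}=T_N'(p_j)\cdot\left.\frac{\partial F}{\partial W_j}\right|_{W=T_N(p)},\]
while $\partial F/\partial z_m$ is literally the same polynomial on both sides. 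Hence $z\in\centV$ is singular iff (a) its image $\bar z$ is singular in the slice $\charV_{\vec W}$ and (b) for each $j\in\{1,2,3,4\}$, either $T_N'(p_j)=0$ or $(\partial F/\partial W_j)(\bar z)=0$. A parametric computation with $p_j=\sigma_j+\sigma_j^{-1}$ shows the first alternative of (b) is equivalent to $W_j=\pm2$ and $w_j\ne\pm2$, matching the ``$w_j\ne\pm2$'' condition in case (1) of the theorem.

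\textbf{Case analysis.} Theorem~\ref{thm-sing-sl} divides condition (a) into case (A) (some $r(p_i)=\pm I$, with coordinates pinned down by Lemma~\ref{lemma-sing-defeq}(1)) and case (B) ($r$ reducible, cut out by the $\kappa$-equations of Lemma~\ref{lemma-sing-defeq}(2)). In case (A), WLOG $i=1$; substituting $z_m=-\tfrac{W_1}{2}W_{j_m}$ into
\[\frac{\partial F}{\partial W_k}=\sum_{m\in\{0,1,\infty\}} z_m\frac{\partial C_m}{\partial W_k}+\frac{\partial\Gamma}{\partial W_k}\]
and expanding, I would obtain $\partial F/\partial W_k\equiv0$ for $k\ne1$, while $\partial F/\partial W_1=\mp\kappa(W_2,W_3,\mp W_4)$ with signs matching $r(p_1)=\pm I$. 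By the standard $SL_2$ commutator-trace identity, this last $\kappa$ vanishes exactly when $(r(p_2),r(p_3))$ shares an eigenvector --- equivalently, together with $r(p_1)=\pm I$, when $r$ itself is reducible. Hence case (A) alone (outside its overlap with (B)) contributes a singular point of $\centV$ iff $T_N'(p_1)=0$, i.e., iff $w_1\ne\pm2$, which is case (1) of the theorem.

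\textbf{Case (B) and conclusion.} For a reducible character I parametrize $r(p_j)=\mathrm{diag}(\mu_j,\mu_j^{-1})$ with $\mu_1\mu_2\mu_3\mu_4=1$, so that the $W_j$ and $z_m$ become explicit Laurent expressions in the $\mu$'s. Expanding $\partial F/\partial W_k$ in the $\mu_j^{\pm1}$ and telescoping with the constraint $\mu_1\mu_2\mu_3\mu_4=1$ gives $\partial F/\partial W_k\equiv0$ for all $k$, so condition (b) is automatic and no constraint on $\vec w$ is needed; this yields case (2). The two cases together cover both implications of the theorem: any overlap (both $r(p_i)=\pm I$ and $r$ reducible) is absorbed into case (2). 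The main obstacle will be the symbolic verification in cases (A) and (B): each reduces to a finite polynomial identity, but both require careful bookkeeping of the sign conventions (from $W_j=-\tr r(p_j)$) and of the puncture pairings that define $C_m$ and $\Gamma$. Once those are locked in, the $\kappa$-identity in case (A) and the Laurent telescoping in case (B) make the computation routine.
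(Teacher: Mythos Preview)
Your approach is correct and genuinely different from the paper's. You first invoke the slice-singularity classification (Theorem~\ref{thm-sing-sl}) and then verify the remaining $w_j$-derivative conditions \emph{stratum by stratum}, using the explicit parametrizations of the two types of slice singularities from Lemma~\ref{lemma-sing-defeq}. The two polynomial identities you isolate are indeed routine: in case (A) with $i=1$ one finds $\partial F/\partial W_k=W_k\bigl(2-\tfrac{W_1^2}{2}\bigr)=0$ for $k\ne1$ (since $W_1^2=4$) and $\partial F/\partial W_1=\pm\kappa(W_2,W_3,\mp W_4)$, whose vanishing is exactly reducibility of $r$ once $r(p_1)$ is central; in case (B) the Laurent expansion with $\mu_1\mu_2\mu_3\mu_4=1$ collapses term by term. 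One small point worth making explicit: your ``WLOG $i=1$'' in case (A)$\setminus$(B) is justified because if two of the $r(p_i)$ are $\pm I$ then the image of $r$ is abelian, hence $r$ is already reducible and you are in case (B).

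The paper proceeds quite differently. It first proves a coordinate-dependent version (Lemma~\ref{lemma-sing}) valid on the open set $z_0\ne\pm2$, using the linear system \eqref{eq-H-1inf} to pin down $z_1,z_\infty$ and then Gr\"obner bases to finish the case analysis; it then transports this to all non-exceptional points via diffeomorphisms of $\hsphere$, and handles the finitely many exceptional points by a computer check. Your route is more uniform and conceptually cleaner---it treats all points at once and avoids both the Gr\"obner step and the exceptional-point enumeration. The paper's route, however, produces the explicit formulas \eqref{eq-sing-z1inf} for $z_1,z_\infty$ at a singular point with $z_0\ne\pm2$, which it reuses later (see \eqref{eq-f1inf-04} and Lemma~\ref{lemma-x1inf-04}) to match up singularities with reducibility of the type-$0$ representations.
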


For the first type of singularities, $T'_N(w_i)=0$. We call these singularities \term{ramified}. Their images in $\charV$ are not singular in $\charV$, although they are singular in the slice $\charV_{\vec{W}}$. The second type of singularities is called \term{reducible} for the obvious reason.

\begin{lemma}\label{lemma-sing}
Suppose $z=(z_0,z_1,z_\infty,\vec{w})\in\centV$ satisfies $z_0\ne\pm2$. If $z$ is singular in $\centV$, then
\begin{equation}\label{eq-sing-z1inf}
z_1=\frac{2C_1+C_\infty z_0}{z_0^2-4},\qquad z_\infty=\frac{2C_\infty+C_1 z_0}{z_0^2-4}.
\end{equation}
In addition, one of the following holds.
\begin{enumerate}
\item There exists an $i\in\{1,2,3,4\}$ such that $T'_N(w_i)=0$ and $\kappa(W_i,W_j,z_0)=0$, where $j$ is the puncture on the same side of $\alpha_0$ as $i$.
\item $\kappa(W_1,W_2,z_0)=\kappa(W_3,W_4,z_0)=0$.
\end{enumerate}
Conversely, a point satisfying the conditions above is singular in $\centV$.
%A singular point of $\centV$ is singular in its slice $\centV_{\vec{w}}$, and 
The two types of singularities above correspond to the two types in Theorem~\ref{thm-sing}.
\end{lemma}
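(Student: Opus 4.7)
The plan is to realize $\centV$ as a hypersurface in $\cx^7$ cut out by the single polynomial
\[F(A_0,A_1,A_\infty,p_1,\dots,p_4)=H(A_0,A_1,A_\infty;C_0,C_1,C_\infty,\Gamma)-4\]
obtained from \eqref{eq-relG} by substituting $P_i=T_N(p_i)$, so that $z\in\centV$ is singular if and only if $\nabla F(z)=0$. A straightforward computation gives
\[\partial F/\partial A_0=2A_0-A_1A_\infty+C_0,\quad \partial F/\partial A_1=2A_1-A_0A_\infty+C_1,\quad \partial F/\partial A_\infty=2A_\infty-A_0A_1+C_\infty,\]
and $\partial F/\partial p_i=T'_N(p_i)\,Q_i$ for certain polynomials $Q_i$ in the $A$'s and $P_j$'s. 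The first three vanishing conditions are exactly the singularity conditions of the slice $\charV_{\vec W}$; treating the last two as a linear system in $(z_1,z_\infty)$ and inverting (using $z_0\ne\pm2$) yields the formulas \eqref{eq-sing-z1inf}. Thus \eqref{eq-sing-z1inf} holds automatically as soon as $z$ is singular in $\centV$, regardless of which type of singularity it is.

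For the forward direction, once \eqref{eq-sing-z1inf} is in hand, I invoke Theorem~\ref{thm-sing} to split into two cases. In the ramified case, some $r(p_i)=\pm I$ with $w_i\ne\pm2$, so $W_i=\pm2$ and $T'_N(w_i)=0$; Lemma~\ref{lemma-sing-defeq}(1) gives $z_0=-\tfrac{W_i}{2}W_{j_0}$, and plugging this into $\kappa(W_i,W_{j_0},z_0)=W_i^2+W_{j_0}^2+z_0^2+W_iW_{j_0}z_0-4$ and using $W_i^2=4$ shows the $\kappa$ expression vanishes identically. This realizes condition (1). In the reducible case, Lemma~\ref{lemma-sing-defeq}(2) already supplies all six $\kappa$ vanishings, in particular $\kappa(W_1,W_2,z_0)=\kappa(W_3,W_4,z_0)=0$, giving condition (2). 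The assignment of ramified vs.\ reducible to (1) vs.\ (2) matches Theorem~\ref{thm-sing}.

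For the converse, I must construct, from \eqref{eq-sing-z1inf} and either (1) or (2), a representation $r:\pi_1(\hsphere)\to SL_2$ with $z$ as its character and of the desired type. Under (1), $W_i=\pm2$ forces $\kappa(W_i,W_j,z_0)$ to be a perfect square, so $\kappa=0$ pins $z_0=-\tfrac{W_iW_j}{2}$; substituting into \eqref{eq-sing-z1inf} and simplifying recovers $z_m=-\tfrac{W_i}{2}W_{j_m}$ for all $m$, matching Lemma~\ref{lemma-sing-defeq}(1) and placing $z$ in Type (1) of Theorem~\ref{thm-sing} since $T'_N(w_i)=0$ forces $w_i\ne\pm2$. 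Under (2), I parametrize $W_i=-(\lambda_i+\lambda_i^{-1})$, use $\kappa(W_1,W_2,z_0)=\kappa(W_3,W_4,z_0)=0$ to select branches with $\lambda_1\lambda_2\lambda_3\lambda_4=1$, and consider the diagonal representation $r(p_i)=\operatorname{diag}(-\lambda_i,-\lambda_i^{-1})$. The boundary traces match by construction and $-\tr r(p_1p_2)=z_0$ is built in; the content is that $-\tr r(p_1p_3)$ and $-\tr r(p_2p_3)$ coincide with the $z_1,z_\infty$ prescribed by \eqref{eq-sing-z1inf}, which is the main obstacle: it amounts to a Laurent polynomial identity in the $\lambda_i$'s that I would verify either by direct expansion, or by observing that \eqref{eq-sing-z1inf} and the two given $\kappa$ equations force the remaining four $\kappa$ equations and then invoking Lemma~\ref{lemma-sing-defeq}(2). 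Once $r$ is identified, $z$ lies in Type (2) of Theorem~\ref{thm-sing} and is therefore singular in $\centV$.
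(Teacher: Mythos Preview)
Your argument is circular. In this paper, Theorem~\ref{thm-sing} is \emph{proved using} Lemma~\ref{lemma-sing}: the proof of Theorem~\ref{thm-sing} says ``If $z$ is not exceptional, then we can apply a diffeomorphism to assume $z_0\ne\pm2$. This case is covered by Lemma~\ref{lemma-sing}.'' So you cannot invoke Theorem~\ref{thm-sing} in either direction of your proof of the lemma. Concretely, in your forward direction you use Theorem~\ref{thm-sing} to split the singular $z$ into ramified/reducible types, and in your converse you place $z$ in one of the types of Theorem~\ref{thm-sing} and conclude singularity from that --- both steps assume what the lemma is supposed to establish.

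The paper's proof avoids this by working directly with the Jacobian. After deriving \eqref{eq-sing-z1inf} from $\partial H/\partial z_1=\partial H/\partial z_\infty=0$ (which you also do), it analyzes the remaining equations $\partial H/\partial z_0=0$ and $(\partial H/\partial W_i)\,T'_N(w_i)=0$ by cases on whether some $T'_N(w_k)$ vanishes, reducing the resulting polynomial system (via Gr\"obner bases) to the dichotomy (1)/(2). The converse is then the direct computational check that all seven partials of $F$ vanish at such a point; the identification with the two types of Theorem~\ref{thm-sing} is a consequence via Lemma~\ref{lemma-sing-defeq}, not an input. To repair your argument you would need to carry out this Jacobian analysis (in particular, actually use $\partial H/\partial z_0=0$, which you never touch) rather than appeal to Theorem~\ref{thm-sing}.
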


\begin{proof}
The singularities are critical points of $H$. Start with the partial derivatives
\begin{equation}\label{eq-H-1inf}
\frac{\partial H}{\partial z_1}=2z_1-z_0z_\infty+C_1=0,\qquad
\frac{\partial H}{\partial z_\infty}=2z_\infty-z_0z_1+C_\infty=0.
\end{equation}
These are linear in $z_1,z_\infty$, and the coefficient matrix is nondegenerate when $z_0\ne\pm2$. The solution is given by \eqref{eq-sing-z1inf}. Now consider the derivatives
\begin{equation}
\frac{\partial H}{\partial w_i}=\frac{\partial H}{\partial W_i}T'_N(w_i)=0.
\end{equation}

Case 1: Suppose $T'_N(w_k)=0$ for some $k$. Without loss of generality, assume $k=1$. Then $T'_N(w_1)=0$ implies $W_1=\pm2$. If $\kappa(W_1,W_2,z_0)=0$, then the point is in (1) with $i=1$. If $\kappa(W_1,W_2,z_0)\ne0$, we reduce the system (using Gr\"obner basis for example)
\begin{equation}
H=4,\quad \frac{\partial H}{\partial z_0}=0,\quad \text{Equation \eqref{eq-sing-z1inf}},\quad
W_1^2=4,\quad \kappa(W_1,W_2,z_0)\ne0
\end{equation}
to obtain $(W_3^2-4)(W_4^2-4)=0$ and $2z_0+W_3W_4=0$, which implies $\kappa(W_3,W_4,z_0)=0$. Exactly one of $W_3,W_4$ is $\pm2$ since $z_0\ne\pm2$. The two cases are similar, so we consider $W_3=\pm2$. If $T'_N(w_3)=0$, then the point is in (1) with $i=3$. Otherwise, we compute
\begin{equation}
\frac{\partial H}{\partial W_3}=W_4z_0+W_1z_1+W_2z_\infty+2W_3+W_1W_2W_4
=-\frac{1}{2}W_3(W_2^2-2\epsilon'W_2W_4+W_4^2)=0,
\end{equation}
where $\epsilon'=\frac{W_1W_3}{4}=\pm1$. This shows $W_1=\epsilon'W_3$, $W_2=\epsilon'W_4$. Therefore, $\kappa(W_1,W_2,z_0)=\kappa(W_3,W_4,z_0)=0$, so the point is in (2).

Case 2: $T'_N(w_i)\ne0$ for all $i$. Then $\frac{\partial H}{\partial W_i}=0$ for all $i$. A standard application of Gr\"obner basis shows that the point is in (2).

The converse direction is a direct check, and the type match up follows from Lemma~\ref{lemma-sing-defeq}.
\end{proof}

\begin{proof}[Proof of Theorem~\ref{thm-sing}]
If $z$ is not exceptional, then we can apply a diffeomorphism to assume $z_0\ne\pm2$. This case is covered by Lemma~\ref{lemma-sing}.

Note the number of exceptional points in $\charV$ is less than $2^3\times3^4$ since each $z_0,z_1,z_\infty$ can be $\pm2$, and each $W_i$ can be $\pm2$ or $0$. The exceptional points of $\centV$ can be grouped by their projections and subdivided according to whether each $T'_N(w_i)=0$. This is a partition into less than $2^3\times3^4\times2^4$ cases, which can be checked easily by a computer program. %TODO: code here?
\end{proof}

\section{Representations for the four-punctured sphere}

In this section, we use the same strategy as the one-punctured torus case to construct representations of $\skein_q(\hsphere)$ and determine their reducibility. We assume that $q$ is not an 8th root of unity. In this case, $N\ge 3$.

Recall the two types of singularities introduced in Theorem~\ref{thm-sing}. Let $\centV_\ram,\centV_\red$ be the sets of ramified and reducible singular points, respectively. We can parameterize $\centV_\red$ as follows. Let 
\begin{equation}
B_\omega=\{\vec{\eta}\in(\cx^\times)^4\mid \eta_1\eta_2\eta_3\eta_4=\omega\},\qquad
B=\bigcup_{\omega^N=1}B_\omega.
\end{equation}
Define
\begin{equation}
h:B\to\centV_\red,\qquad h(\vec{\eta})=(z_0,z_1,z_\infty,\vec{w})	
\end{equation}
where $w_i=\eta_i+\eta_i^{-1}$, and $z_m=-(\eta_1\eta_{j_m})^N-(\eta_1\eta_{j_m})^{-N}$ for $m\in\{0,1,\infty\}$, where $j_m$ is the puncture on the same side of $\alpha_m$ as $1$. The image is in $\centV_\red$ since it is the character of $r:\pi_1(\hsphere)\to SL_2$ given by $r(p_i)=-\begin{pmatrix}\eta_i^N&0\\0&\eta_i^{-N}\end{pmatrix}$. It is clear that $h$ is surjective.
%$h$ is surjective and generically 2-to-1.

Let
\begin{equation}
\centV'_\red=\bigcup_{\substack{\omega^N=1\\ \omega\ne1}}h(B_\omega)\subset\centV_\red,\qquad
\azuV^c=\centV_\ram\cup\centV'_\red.
\end{equation}
$\azuV^c$ is a proper subset of the singular locus. The missing component $h(B_1)$ is, in a sense, the set of characters of $N$-th roots of reducible representations $\pi_1(\hsphere)\to SL_2$.

\begin{theorem}\label{thm-main-hsphere}
Suppose $q$ is a root of unity. For every $z\in\centV$, there exists an $N$-dimensional representation $\rho:\skein_q(\hsphere)\to\End(V)$ with classical shadow $z$, and $\rho$ is reducible if and only if $z\in\azuV^c$.

The Azumaya locus of $\skein_q(\hsphere)$ is the complement of $\azuV^c$. The fully Azumaya locus of $\skein_q(\hsphere)$ is the union of smooth loci of the slices of $\charV$.
\end{theorem}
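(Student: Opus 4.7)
The plan is to parallel the strategy used for $\ptorus$ in Theorem~\ref{thm-main-ptorus}: construct \emph{type-$0$} representations in which $\rho(\alpha_0)$ is diagonalized, analyze reducibility via the associated graph of $\rho(\alpha_\infty)$, and then read off the Azumaya locus from the reducibility criterion. The three parts of the theorem will be handled in sequence, with the existence construction in part~(1) supplying the analytical input for parts~(2) and~(3).

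For existence, I would write $z_0 = \sigma^N + \sigma^{-N}$ and on a basis $\{v_i\}_{i\in\ints/N}$ define $\rho(\alpha_0)v_i = \lambda_i v_i$ for eigenvalues $\lambda_i = q^{4i}\sigma + q^{-4i}\sigma^{-1}$, together with $\rho(\alpha_\infty)v_i = s_i v_{i+1} + t_{i-1} v_{i-1}$ for undetermined scalars. The exchange relations~\eqref{eq-rel04-1}--\eqref{eq-rel04-0} (now with nonzero $c_m$ terms) determine $\rho(\alpha_1)$ and yield scalar constraints of the shape $s_i t_i = r_i(\sigma,\vec{w})$ for explicit rational functions $r_i$ built from $C_0, C_1, C_\infty, \Gamma$. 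A Chebyshev-product identity parallel to Lemma~\ref{lemma-Cheb-prod} should give $\prod_i r_i = R(z_0, \vec{w})$, and the argument of Lemma~\ref{lemma-STimg} then lets me realize any prescribed pair $(\Pi_s, \Pi_t)$ with $\Pi_s \Pi_t = R$. Computing the central scalars $\rho(A_1), \rho(A_\infty)$ by enumerating cycles in the graph of $\rho(\alpha_\infty)$ (as in Lemma~\ref{lemma-x1inf}) produces linear expressions in $\Pi_s,\Pi_t$ plus correction terms, which I can invert to hit any $(z_1, z_\infty)$ provided $z_0 \ne \pm 2$. Since the mapping class group of $\hsphere$ acts transitively on slopes, every non-exceptional $z$ reduces to this case by a diffeomorphism, while the finitely many exceptional points enumerated in Lemma~\ref{lemma-except-04} require ad hoc constructions in the spirit of Propositions~\ref{prop-except-odd} and~\ref{prop-except-even}.

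For reducibility, when $z_0 \ne \pm 2$ the eigenvalues $\lambda_i$ are distinct, so invariant subspaces are coordinate subspaces $V_B$; the graph argument of Lemma~\ref{lemma-red} shows that $\rho$ is reducible exactly when $\Pi_s = \Pi_t = 0$ and $r_i = 0$ for at least two distinct indices. The numerator of $r_i$ is a quadratic polynomial in $u_i := q^{8i+2}\sigma^2$ whose coefficients are prescribed by $\vec{w}$, so two simultaneous vanishings correspond either to a double root---forcing $T'_N(w_k) = 0$ for some $k$, i.e.\ a ramified singularity in $\centV_\ram$---or to two distinct indices $i \ne j$ matching the two distinct roots. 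Translating the second case through the parameterization $h$, I expect it to correspond precisely to $z \in h(B_\omega)$ with $\omega \ne 1$, while on $h(B_1)$ the two roots collapse to indices differing by a multiple of $N$, hence to a single $i$ mod $N$, and $\rho$ remains irreducible. Combined with irreducibility on smooth points, this gives reducibility iff $z \in \centV_\ram \cup \centV'_\red = \azuV^c$, which yields parts~(1) and~(2).

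Part~(3) then follows by intersecting $\azuV^c$ with each slice $\centV_{\vec{w}}$: the ramified singularities of $\centV$ become smooth inside $\centV_{\vec{w}}$ because the condition $T'_N(w_k) = 0$ lives in the transverse direction, so slicing removes $\centV_\ram$ and leaves only the reducible singularities of $\charV_{\vec{W}}$ classified in Theorem~\ref{thm-sing-sl} and Lemma~\ref{lemma-sing-defeq}. The main obstacle I foresee is the dichotomy on $h(B_\omega)$ versus $h(B_1)$: both consist of characters of reducible $SL_2$-representations of $\pi_1(\hsphere)$, yet only $\omega \ne 1$ forces reducibility of the skein representation. Establishing this requires a careful count of the $N$ simultaneous equations $w_k + q^{8i+2}\sigma^2 + q^{-8i-2}\sigma^{-2} = 0$ as $i$ ranges over $\ints/N$, tracking the factorization of the Chebyshev numerator of $R$ in the four slice parameters, and showing that the collapse to a single index is exactly the obstruction measured by $\omega$.
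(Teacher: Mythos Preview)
Your overall strategy matches the paper's: type-$0$ representations, graph-based reducibility, then read off the Azumaya and fully Azumaya loci. However, two concrete points in your sketch need correction.

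First, for $\hsphere$ the ansatz $\rho(\alpha_\infty)v_i = s_i v_{i+1} + t_{i-1} v_{i-1}$ is too restrictive. The inhomogeneous terms $(q^2-q^{-2})c_m$ in relations \eqref{eq-rel04-1}--\eqref{eq-rel04-0} force a nonzero diagonal term: the paper (Proposition~\ref{prop-rep04}) takes
\[
\rho(\alpha_\infty)v_i = s_i v_{i+1} + d_i v_i + t_{i-1} v_{i-1},\qquad
\rho(\alpha_1)v_i = q^{4i+2}\sigma s_i v_{i+1} + d'_i v_i + q^{-4i+2}\sigma^{-1} t_{i-1} v_{i-1},
\]
with $d_i,d'_i$ given explicitly in \eqref{eq-d}. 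Without these, \eqref{eq-rel04-1} and \eqref{eq-rel04-inf} would simultaneously demand that the diagonal of $\rho(\alpha_1)$ satisfy two incompatible conditions unless $c_1=c_\infty=0$. Once the $d_i$ are in place, the rest of your plan (the product identity $\prod r_i=R$, the image of $(\Pi_s,\Pi_t)$, and the cycle computation of $\rho(A_1),\rho(A_\infty)$) goes through as in the paper. The numerator of $r_i$ is $\kappa(w_1,w_2,\lambda'_i)\kappa(w_3,w_4,\lambda'_i)$, a product of two quadratics in $\lambda'_i$ rather than a single quadratic; your dichotomy between $h(B_1)$ and $h(B_\omega)$ with $\omega\ne1$ is then exactly the case split in the paper's Lemma~\ref{lemma-type0-red04}, where one tracks which $\kappa$-factor vanishes at the two indices.

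Second, your description of the fully Azumaya locus is backwards. You write that ramified singularities of $\centV$ ``become smooth inside $\centV_{\vec{w}}$'', but the opposite is true: a ramified point is the character of $r$ with $r(p_i)=\pm I$, and by Theorem~\ref{thm-sing-sl} such a point is \emph{always} singular in the slice. What distinguishes it is that it is singular in the full $\centV$ only when $w_i\ne\pm2$ (Theorem~\ref{thm-sing}). The paper's argument for the fully Azumaya locus is simply a comparison of Theorems~\ref{thm-sing-sl} and~\ref{thm-sing}: a point $x\in\charV_{\vec{W}}$ fails to be fully Azumaya iff some preimage lies in $\azuV^c$, and one checks that this happens exactly when $x$ is singular in the slice by exhibiting, for each singular type, a choice of $\vec{w}$ over $\vec{W}$ landing the preimage in $\centV_\ram$ or $\centV'_\red$.
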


\begin{proof}
The construction of representations is given in Propositions~\ref{prop-rep04} and \ref{prop-gen-exist04} when $A_0(z)\ne\pm2$. Then all non-exceptional points are covered by applying diffeomorphisms. The exceptional points are covered by \ref{prop-except04}. The reducibility is given in Lemmas~\ref{lemma-type0-red04} and \ref{lemma-red04-2}.

The Azumaya locus follow directly from the first half of the theorem, and the fully Azumaya locus follows by comparing Theorems~\ref{thm-sing-sl} and \ref{thm-sing}.
\end{proof}

\subsection{Type-$0$ representations}

We look for $N$-dimensional representations of the sliced skein algebra $\skein_{q,\vec{w}}(\hsphere)$ where $A_0=\epsilon^2T_N(\alpha_0)$ acts as a scalar $z_0\in\cx$. %Note $c_m,\gamma$ are scalars in the sliced skein algebra.

Assume $z_0\ne\pm2$. Write $\epsilon^2z_0=\sigma^N+\sigma^{-N}$. Then $\sigma^N\ne\pm1$. Define
\begin{align}
\lambda_i&=q^{4i}\sigma+q^{-4i}\sigma^{-1},&
\lambda'_i&=q^{4i+2}\sigma+q^{-4i-2}\sigma^{-1},\\
\hat{\lambda}_i&=q^{4i}\sigma-q^{-4i}\sigma^{-1},&
\hat{\lambda}'_i&=q^{4i+2}\sigma-q^{-4i-2}\sigma^{-1}.
\end{align}
Note $\hat{\lambda}_i$ and $\hat{\lambda}'_i$ are nonzero.
\begin{align}
r_i&=r_i(\sigma,\vec{w}):=\frac{\kappa(w_1,w_2,\lambda'_i)\kappa(w_3,w_4,\lambda'_i)}{\hat{\lambda}_i\hat{\lambda}_{i+1}(\hat{\lambda}'_i)^2},\label{eq-ri04}\\
E^0_{\sigma,\vec{w}}&=\{(s_1,\dotsc,s_N,t_1,\dotsc,t_N)\in\cx^{2N}\mid s_it_i=r_i(\sigma,\vec{w})\}.
\end{align}

\begin{proposition}[Generalization of {\cite[Lemmas~18--23]{Tak}}]\label{prop-rep04}
Let $V$ be an $N$-dimensional vector space with a basis $\{v_i\mid i\in\ints/N\}$. Suppose $\epsilon^2z_0=\sigma^N+\sigma^{-N}\ne\pm2$, $\vec{w}\in\cx^4$, and $(\vec{s},\vec{t})\in E^0_{\sigma,\vec{w}}$. Let
\begin{equation}\label{eq-d}
d_i=\frac{\lambda_ic_1+(q^2+q^{-2})c_\infty}{\hat{\lambda}'_i\hat{\lambda}'_{i-1}},\qquad
d'_i=\frac{\lambda_ic_\infty+(q^2+q^{-2})c_1}{\hat{\lambda}'_i\hat{\lambda}'_{i-1}}.
\end{equation}
Then the following equations define a representation $\rho:\skein_{q,\vec{w}}(\hsphere)\to\End(V)$.
\begin{align}
\rho(\alpha_0)v_i&=\lambda_iv_i,\label{eq-rep04-0}\\
\rho(\alpha_1)v_i&=q^{4i+2}\sigma s_iv_{i+1}+d'_iv_i+q^{-4i+2}\sigma^{-1} t_{i-1}v_{i-1},\label{eq-rep04-1}\\
\rho(\alpha_\infty)v_i&=s_iv_{i+1}+d_iv_i+t_{i-1}v_{i-1}.\label{eq-rep04-inf}
\end{align}
In addition, $\rho(A_0)=z_0$.
\end{proposition}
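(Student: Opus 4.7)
The plan is to verify the defining relations directly on the basis $\{v_i\}$, mirroring the argument of Proposition~\ref{prop-rep}. The identity $\rho(A_0) = z_0$ is automatic: since $\rho(\alpha_0)$ is diagonal with eigenvalues $\lambda_i$, we have
\begin{equation*}
T_N(\lambda_i) = (q^{4i}\sigma)^N + (q^{-4i}\sigma^{-1})^N = \sigma^N + \sigma^{-N} = \epsilon^2 z_0,
\end{equation*}
so $A_0 = \epsilon^2 T_N(\alpha_0)$ acts as $z_0$. What actually requires checking is that $\rho$ respects the commutation relations \eqref{eq-rel04-1}--\eqref{eq-rel04-0} and the central relation \eqref{eq-relG-gen}, where $c_0, c_1, c_\infty, \gamma$ are now scalars determined by $\vec{w}$.

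For each commutation relation, I would apply both sides to $v_i$ and match coefficients on each basis vector. For \eqref{eq-rel04-1}, the coefficient of $v_{i+1}$ reduces to $q^2\lambda_{i+1} - q^{-2}\lambda_i = (q^4-q^{-4})q^{4i+2}\sigma$, which follows from the explicit form of $\lambda_i$, and the coefficient of $v_{i-1}$ is symmetric. The coefficient of $v_i$ reduces, after inserting the definitions \eqref{eq-d} of $d_i, d'_i$, to the single identity
\begin{equation*}
\lambda_i^2 - \hat{\lambda}'_i\hat{\lambda}'_{i-1} = (q^2+q^{-2})^2,
\end{equation*}
which is verified by direct expansion. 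The particular form of $d_i, d'_i$ is engineered precisely to make this work. Relations \eqref{eq-rel04-inf} and \eqref{eq-rel04-0} are handled by the same procedure after relabeling.

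The main obstacle is the central relation \eqref{eq-relG-gen}. Applying $H(q^2\alpha_0, q^2\alpha_1, q^{-2}\alpha_\infty; c_0, c_1, c_\infty, \gamma)$ to $v_i$ produces contributions in $v_{i-2}, \ldots, v_{i+2}$. The $v_{i\pm 2}$ coefficients come only from the terms $(q^2\alpha_1)^2$, $(q^{-2}\alpha_\infty)^2$ and the cubic $q^2\alpha_0 \cdot q^{-2}\alpha_\infty \cdot q^2\alpha_1$; their vanishing is a direct $q$-algebraic identity that does not depend on $\vec{s}, \vec{t}$. The $v_{i\pm 1}$ coefficients factor through $s_i$ (respectively $t_{i-1}$), and after dividing reduce to polynomial identities in $\sigma^{\pm 1}, q^{\pm 1}, c_0, c_1, c_\infty$ independent of $\vec{s}, \vec{t}$. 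The hardest piece is the $v_i$ coefficient: it is where the combination $s_i t_i + s_{i-1} t_{i-1} = r_i + r_{i-1}$ enters, alongside quadratic terms in $d_i, d'_i$ and the scalar $\gamma$. Substituting the definition \eqref{eq-ri04} of $r_i$ turns the desired equality into a polynomial identity in $\sigma^{\pm 1}$ and $\vec{w}$ that encodes the Fricke relation for $\hsphere$, and explains why the numerator of $r_i$ factors as $\kappa(w_1,w_2,\lambda'_i)\kappa(w_3,w_4,\lambda'_i)$. Since all the algebra involved is polynomial of bounded degree in $\sigma^{\pm 1}$ (with the $w_j$ appearing only through $c_m$ and $\gamma$), the verification reduces to a finite symbolic check, which is the routine but tedious computation alluded to in the proof of Proposition~\ref{prop-rep}.
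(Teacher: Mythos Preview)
Your approach is essentially the paper's own: apply each relation to $v_i$, match coefficients, and reduce to polynomial identities in $q,\sigma,\vec{w}$, with the definition of $r_i$ supplying the crucial input. One small imprecision: relation \eqref{eq-rel04-0} is \emph{not} symmetric to \eqref{eq-rel04-1} and \eqref{eq-rel04-inf} under relabeling, since neither $\alpha_1$ nor $\alpha_\infty$ is diagonal---its $v_i$-coefficient already involves $r_i\hat\lambda_{i+1}-r_{i-1}\hat\lambda_{i-1}$, so (as the paper notes) both \eqref{eq-rel04-0} and \eqref{eq-relG-gen} require \eqref{eq-ri04}, not just the definitions of $d_i,d'_i$.
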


As before, this representation will be called \term{type-$0$}.

\begin{proof}
This is a very long calculation. \eqref{eq-rel04-1} and \eqref{eq-rel04-inf} are satisfied by definition \eqref{eq-d} and \eqref{eq-rep04-0}--\eqref{eq-rep04-inf}, and \eqref{eq-relG-gen} and \eqref{eq-rel04-0} uses \eqref{eq-ri04}. The details are omitted.
\end{proof}

\begin{lemma}
We have
\begin{equation}\label{eq-kprod}
\prod_{k=1}^N\kappa(w_i,w_j,\lambda'_k)=\kappa(W_i,W_j,z_0).
\end{equation}
\end{lemma}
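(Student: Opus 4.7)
The plan is to reduce the identity to a calculation on a two-fold cover where all the relevant traces become sums of the form $\eta + \eta^{-1}$, then use cyclotomic products.

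First I would establish the factorization
\begin{equation*}
\kappa(a,b,c) = (c + xy + x^{-1}y^{-1})(c + xy^{-1} + x^{-1}y)
\qquad\text{whenever } a = x+x^{-1},\ b = y+y^{-1}.
\end{equation*}
This follows by checking that $P = xy + x^{-1}y^{-1}$ and $Q = xy^{-1} + x^{-1}y$ satisfy $P+Q = ab$ and $PQ = a^2 + b^2 - 4$, so that $(c+P)(c+Q) = c^2 + abc + a^2 + b^2 - 4 = \kappa(a,b,c)$.

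Next, I would introduce auxiliary variables $\eta_i, \eta_j$ with $w_i = \eta_i + \eta_i^{-1}$, $w_j = \eta_j + \eta_j^{-1}$ (the final identity is polynomial in $w_i, w_j$, so this entails no loss of generality), and set $\zeta_k = q^{4k+2}\sigma$ so that $\lambda'_k = \zeta_k + \zeta_k^{-1}$. Applying the factorization above with $\xi = \eta_i\eta_j$ and $\xi' = \eta_i\eta_j^{-1}$ gives
\begin{equation*}
\kappa(w_i, w_j, \lambda'_k) = (\lambda'_k + \xi + \xi^{-1})(\lambda'_k + \xi' + \xi'^{-1}),
\end{equation*}
so it suffices to evaluate $\prod_{k=1}^N (\lambda'_k + \xi + \xi^{-1})$ for a general $\xi$.

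For this core computation, I would use the identity $\lambda'_k + \xi + \xi^{-1} = (\zeta_k + \xi)(\zeta_k + \xi^{-1})/\zeta_k$, verified by clearing denominators. Since $q^4$ has order $N$, the set $\{q^{4k}\}_{k=1}^N$ is exactly the set of $N$-th roots of unity, and the standard cyclotomic product $\prod_{\omega^N=1}(\omega + t) = t^N + (-1)^{N+1}$ yields
\begin{equation*}
\prod_{k=1}^N(\zeta_k + c) = c^N + (-1)^{N+1}(q^2\sigma)^N = c^N - (-q^2)^N\sigma^N = c^N + \epsilon^2\sigma^N,
\end{equation*}
where the last equality uses \eqref{eq-zeta2N}. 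Since the same identity with $c = 0$ gives $\prod_k \zeta_k = \epsilon^2 \sigma^N$, combining and simplifying via $\epsilon^4 = 1$ and $\epsilon^2 z_0 = \sigma^N + \sigma^{-N}$ leads to
\begin{equation*}
\prod_{k=1}^N(\lambda'_k + \xi + \xi^{-1}) = \frac{(\xi^N + \epsilon^2\sigma^N)(\xi^{-N} + \epsilon^2\sigma^N)}{\epsilon^2\sigma^N} = \xi^N + \xi^{-N} + z_0.
\end{equation*}

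Finally, applying the $\kappa$-factorization in reverse to $(W_i, W_j, z_0)$ with $W_i = \eta_i^N + \eta_i^{-N}$ and $W_j = \eta_j^N + \eta_j^{-N}$ shows that
\begin{equation*}
\kappa(W_i, W_j, z_0) = (z_0 + \xi^N + \xi^{-N})(z_0 + \xi'^N + \xi'^{-N}),
\end{equation*}
which multiplied over the two factors matches $\prod_k \kappa(w_i, w_j, \lambda'_k)$ and proves the identity. The main obstacle is purely bookkeeping: correctly tracking the signs and powers of $\epsilon$ in the cyclotomic product; once the identity $(-q^2)^N = -\epsilon^2$ is used, everything collapses cleanly.
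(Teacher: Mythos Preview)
Your proof is correct and follows essentially the same strategy as the paper: factorize $\kappa$ via the parametrization $w_i=\eta_i+\eta_i^{-1}$, compute the product of the resulting linear factors, and reassemble using the factorization in reverse. The only difference is in how you evaluate $\prod_k(\lambda'_k+\xi+\xi^{-1})$: the paper identifies $\prod_k(x+\lambda'_k)$ with $T_N(x)+z_0$ by a Chebyshev root-matching argument (equation \eqref{eq-lam-prime}) and then substitutes $x=\xi+\xi^{-1}$, whereas you split further into $(\zeta_k+\xi)(\zeta_k+\xi^{-1})/\zeta_k$ and use the raw cyclotomic product directly; both routes land on $\xi^N+\xi^{-N}+z_0$.
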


\begin{proof}
Start with a calculation similar to Lemma~\ref{lemma-STimg} to get
\begin{equation}\label{eq-lam-prime}
\prod_{i=1}^N(x+\lambda'_i)=T_N(x)-(-1)^Nq^{2N}(\sigma^N+\sigma^{-N})=T_N(x)+z_0,
\end{equation}
Then we use the identity
\begin{equation}\label{eq-kfactor}
\kappa(a+a^{-1},b+b^{-1},x)=(x+ab+a^{-1}b^{-1})(x+ab^{-1}+a^{-1}b).
\end{equation}
Writing $w_i=\eta_i+\eta_i^{-1}$, the product of the first factor of $\kappa(w_1,w_2,\lambda'_i)$ becomes
\begin{equation*}
\prod_{i=1}^N(\lambda'_i+\eta_1\eta_2+\eta_1^{-1}\eta_2^{-1})
=T_N(\eta_1\eta_2+\eta_1^{-1}\eta_2^{-1})+z_0
=\eta_1^N\eta_2^N+\eta_1^{-N}\eta_2^{-N}+z_0.
\end{equation*}
After repeating the calculation for the second factor, we get
\begin{equation*}
\prod_{i=1}^N\kappa(w_1,w_2,\lambda'_i)=(\eta_1^N\eta_2^N+\eta_1^{-N}\eta_2^{-N}+z_0)(\eta_1^N\eta_2^{-N}+\eta_1^{-N}\eta_2^N+z_0)=\kappa(W_1,W_2,z_0).\qedhere
\end{equation*}
\end{proof}

Define
\begin{gather}
R=R(z_0,\vec{w}):=\frac{\kappa(W_1,W_2,z_0)\kappa(W_3,W_4,z_0)}{(z_0^2-4)^2}\\
\Pi_s(\vec{s},\vec{t})=\prod_{i=1}^Ds_i,\qquad
\Pi_t(\vec{s},\vec{t})=\prod_{i=1}^Dt_i.
\end{gather}

\begin{lemma}[Compare {\cite[Lemma~22]{Tak}}]\label{lemma-STimg04}
Suppose $z_0\ne\pm2$. Then the image of the map $(\Pi_s,\Pi_t):E^0_{\sigma,\vec{w}}\to\cx^2$ is the curve $\{(x,y)\in\cx^2\mid xy=R(z_0,\vec{w})\}$.
\end{lemma}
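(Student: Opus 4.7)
The plan is to mirror the proof of Lemma~\ref{lemma-STimg} almost verbatim, with the only new ingredient being an identity $\prod_i r_i = R(z_0,\vec w)$.

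For the inclusion $(\Pi_s,\Pi_t)(E^0_{\sigma,\vec w})\subseteq\{xy=R\}$, it suffices to check $\prod_{i=1}^N r_i = R$. From \eqref{eq-ri04} the numerator of $\prod_i r_i$ is
\[
\prod_{i=1}^N\kappa(w_1,w_2,\lambda'_i)\cdot\prod_{i=1}^N\kappa(w_3,w_4,\lambda'_i),
\]
which by \eqref{eq-kprod} equals $\kappa(W_1,W_2,z_0)\,\kappa(W_3,W_4,z_0)$, matching the numerator of $R$. The denominator is $\bigl(\prod_i\hat\lambda_i\bigr)^2\bigl(\prod_i\hat\lambda'_i\bigr)^2$. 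Since $\omega=q^4$ is a primitive $N$-th root of unity, Lemma~\ref{lemma-Cheb-prod} evaluates $\prod_i\hat\lambda_i$, and the substitution $\sigma\mapsto q^2\sigma$ (which turns $\hat\lambda_i$ into $\hat\lambda'_i$) sends $\sigma^N\mapsto q^{2N}\sigma^N=(-1)^{N-1}\epsilon^2\sigma^N$ by \eqref{eq-zeta2N}. A short case check on the parity of $N$ and the value of $\epsilon^2\in\{\pm1\}$ shows that in every case
\[
\Bigl(\prod_{i=1}^N\hat\lambda_i\Bigr)^2\Bigl(\prod_{i=1}^N\hat\lambda'_i\Bigr)^2=(z_0^2-4)^2,
\]
which agrees with the denominator of $R$.

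For the reverse inclusion, fix $(x,y)\in\cx^2$ with $xy=R$ and construct $(\vec s,\vec t)\in E^0_{\sigma,\vec w}$ explicitly, exactly as in Lemma~\ref{lemma-STimg}. If $x\ne 0$, set $s_1=x$, $s_i=1$ for $i>1$, and $t_i=r_i/s_i$; then $s_it_i=r_i$, $\Pi_s=x$, and $\Pi_t=R/x=y$. If $x=0$, then $R=0$, so there exists $i_0$ with $r_{i_0}=0$; set $s_{i_0}=0$ and leave $t_{i_0}$ undetermined, while for every $i\ne i_0$ with $r_i\ne 0$ put $s_i=1$, $t_i=r_i$, and for the remaining indices with $r_i=0$ put $s_i=0$ with $t_i$ undetermined. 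Then $\Pi_s=0=x$, and the undetermined $t_j$'s (at least $t_{i_0}$) can be chosen to achieve $\Pi_t=y$.

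The only place requiring real work is the denominator identity $\bigl(\prod_i\hat\lambda_i\bigr)^2\bigl(\prod_i\hat\lambda'_i\bigr)^2=(z_0^2-4)^2$, because $N$ can be either parity and $\epsilon^2\in\{\pm1\}$ must be tracked through the $(-1)^{N-1}\epsilon^2$ arising from \eqref{eq-zeta2N}; however, in each of the four sub-cases the factors $(2\pm\epsilon^2 z_0)$ and $(2\mp\epsilon^2 z_0)$ recombine to $(z_0-2)(z_0+2)=z_0^2-4$ after squaring, so the verification is routine bookkeeping. Everything else is parallel to the torus proof.
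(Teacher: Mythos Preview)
Your proof is correct and follows essentially the same route as the paper: verify $\prod_i r_i=R$ by matching numerators via \eqref{eq-kprod} and denominators via Lemma~\ref{lemma-Cheb-prod}, then handle surjectivity exactly as in Lemma~\ref{lemma-STimg}. The only minor difference is in the denominator computation: the paper avoids your case split on $\epsilon^2$ by observing directly that $\prod_i\hat\lambda_i\hat\lambda'_i=q^{2N}(\sigma^N-\sigma^{-N})^2=q^{2N}(z_0^2-4)$ in both parities, after which squaring and using $q^{4N}=1$ gives $(z_0^2-4)^2$ with no further bookkeeping. (Also, your ``four sub-cases'' collapse to three, since $N$ even forces $\epsilon^2=1$; and your description of the factors as $(2\pm\epsilon^2 z_0)$ only fits the $N$ even case. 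But these are cosmetic issues, not gaps.)
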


\begin{proof}
The proof is similar to Lemma~\ref{lemma-STimg}. First we show the image is contained in the curve $xy=R$. We need
\begin{equation*}
R=\prod_{i=1}^Nr_i
=\prod_{i=1}^N\frac{\kappa(w_1,w_2,\lambda'_i)\kappa(w_3,w_4,\lambda'_i)}{\hat{\lambda}_i\hat{\lambda}_{i+1}(\hat{\lambda}'_i)^2}.
\end{equation*}
The numerators match using \eqref{eq-kprod}. For the denominator, we have
\begin{equation}\label{eq-lambda-prod04}
\begin{aligned}
\prod_{i=1}^N(\hat{\lambda}_i\hat{\lambda}'_i)
&=\begin{cases}
(\sigma^N-\sigma^{-N})(q^{2N}\sigma^N-q^{-2N}\sigma^{-N}),&N\text{ is odd},\\
(2-\sigma^N-\sigma^{-N})(2-q^{2N}\sigma^N-q^{-2N}\sigma^{-N}),&N\text{ is even}
\end{cases}\\
&=q^{2N}(\sigma^N-\sigma^{-N})^2=q^{2N}(z_0^2-4)
\end{aligned}
\end{equation}
by Lemma~\ref{lemma-Cheb-prod}. Therefore, the denominators also match.

The opposite direction is exactly the same as Lemma~\ref{lemma-STimg}.
\end{proof}

\begin{comment}
\begin{align}
\prod_{i=1}^N(\hat{\lambda}_i\hat{\lambda}_{i+1}(\hat{\lambda}'_i)^2)
&=\left(\prod_{i=1}^N\hat{\lambda}_i\right)^2\left(\prod_{i=1}^N\hat{\lambda}'_i\right)^2\\
&=\begin{cases}
(\sigma^N-\sigma^{-N})^2(\sigma^N-\sigma^{-N})^2,&N\text{ is odd}\\
(2-\sigma^N-\sigma^{-N})(2+\sigma^N+\sigma^{-N}),&N\text{ is even}
\end{cases}\\
&=(z_0^2-4)^2
\end{align}
\end{comment}

\begin{lemma}[Generalization of {\cite[Lemma~24]{Tak}}]\label{lemma-x1inf-04}
In a type-$0$ representation $\rho$ (with $z_0\ne\pm2$), 
\begin{equation}\label{eq-x1inf-04}
\rho(A_1)=\sigma^N\Pi_s+\sigma^{-N}\Pi_t+f_1,\qquad
\rho(A_\infty)=\epsilon^2\Pi_s+\epsilon^2\Pi_t+f_\infty
\end{equation}
are scalars, where
\begin{equation}\label{eq-f1inf-04}
f_1=\frac{2C_1+C_\infty z_0}{z_0^2-4},\qquad
f_\infty=\frac{2C_\infty+C_1 z_0}{z_0^2-4}.
\end{equation}
Therefore, the representation has a classical shadow.
\end{lemma}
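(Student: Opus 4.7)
The strategy mirrors Lemma~\ref{lemma-x1inf}, with the central relation \eqref{eq-relG} playing the role of \eqref{eq-rel-even}. Since $z_0 \ne \pm 2$, we have $\sigma^N \ne \pm 1$, and a short computation shows that $\lambda_i = \lambda_j$ for $i \ne j$ in $\ints/N$ would force $\sigma^2 = q^{-4i-4j}$ and hence $\sigma^{2N} = 1$, a contradiction. Thus the $N$ eigenvalues $\lambda_i$ of $\rho(\alpha_0)$ are distinct. Because $A_1$ and $A_\infty$ are central, they commute with $\rho(\alpha_0)$ and therefore act diagonally in the basis $\{v_i\}$; write $\rho(A_1)v_i = z_1(i) v_i$ and $\rho(A_\infty) v_i = z_\infty(i) v_i$.

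Next I would compute these diagonal entries using the associated graph of $\rho(\alpha_\infty)$ (and similarly $\rho(\alpha_1)$), which is a cycle on $\ints/N$ with self-loops of weight $d_i$ (respectively $d'_i$). By \eqref{eq-matpow}, $T_N(\rho(\alpha_\infty))_{ii}$ is a weighted sum over closed walks at $i$ of length at most $N$. The only such walks that traverse edges other than self-loops or backtracking pairs are the two full once-around traversals of length exactly $N$, which appear only from the leading $x^N$ term of $T_N$. Their weighted contributions to $\rho(A_\infty) = \epsilon^2 T_N(\rho(\alpha_\infty))$ are $\epsilon^2 \Pi_s$ and $\epsilon^2 \Pi_t$; the analogous contributions to $\rho(A_1) = \epsilon^2 T_N(\rho(\alpha_1))$ become $\sigma^N \Pi_s$ and $\sigma^{-N} \Pi_t$ after collecting the $q^{\pm(4j+2)}\sigma^{\pm 1}$ factors and using $\epsilon^2 = q^{2N^2}$ together with $q^{4N} = 1$. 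Every other walk pairs forward with backward steps and self-loops, so contributes polynomials in the $r_j$ and the $d_j$ (respectively $d'_j$), which are constants on $E^0_{\sigma, \vec w}$. Hence
\begin{equation*}
z_1(i) = \sigma^N \Pi_s + \sigma^{-N}\Pi_t + f_1(i), \qquad z_\infty(i) = \epsilon^2 \Pi_s + \epsilon^2 \Pi_t + f_\infty(i),
\end{equation*}
where $f_1(i), f_\infty(i)$ depend only on $\sigma, \vec w$, and $i$, and in particular are constant on $E^0_{\sigma, \vec w}$.

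To pin down $f_1, f_\infty$ and confirm they do not depend on $i$, I would apply \eqref{eq-relG} at $v_i$, i.e. $H(z_0, z_1(i), z_\infty(i); C_0, C_1, C_\infty, \Gamma) = 4$. Substituting the ansatz and using $\Pi_s \Pi_t = R$ together with the identity $\epsilon^2 z_0 = \sigma^N + \sigma^{-N}$, the $\Pi_s^2$ and $\Pi_t^2$ coefficients cancel identically via $\sigma^{2N} + 1 = \epsilon^2 z_0 \sigma^N$. What remains is a polynomial expression $C'\Pi_s + D'\Pi_t + E = 0$ on $E^0_{\sigma, \vec w}$, with $C', D', E$ affine in $f_1(i), f_\infty(i)$. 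By Lemma~\ref{lemma-STimg04}, the pair $(\Pi_s, \Pi_t)$ sweeps out the full curve $xy = R$, whose ideal is generated by $xy - R$; a polynomial of degree one can only vanish there if all of $C', D', E$ are zero. The resulting $2 \times 2$ linear system in $f_1(i), f_\infty(i)$ has coefficient determinant a nonzero scalar multiple of $(\sigma^N - \sigma^{-N})^2 = z_0^2 - 4$, so the unique solution reproduces \eqref{eq-f1inf-04}, independent of $i$. The main obstacle is purely the bookkeeping of this substitution---particularly the $\Pi_s^2, \Pi_t^2$ cancellations, which rely on $\epsilon^4 = 1$; conceptually the argument is identical to the one-punctured torus case.
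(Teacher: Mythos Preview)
Your argument is correct and follows essentially the same route as the paper's proof: diagonalize via the distinct $\lambda_i$, extract the $\Pi_s,\Pi_t$ contributions from the full-cycle walks (with the self-loop and backtracking contributions folded into the constants $f_1(i),f_\infty(i)$), then impose \eqref{eq-relG} and use Lemma~\ref{lemma-STimg04} to force the linear coefficients in $\Pi_s,\Pi_t$ to vanish, yielding the $2\times2$ system for $f_1,f_\infty$. Your explicit remark on the $\Pi_s^2,\Pi_t^2$ cancellations via $\sigma^{2N}+1=\epsilon^2 z_0\sigma^N$ is a detail the paper leaves implicit, but otherwise the proofs coincide.
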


\begin{remark}
Note that the expressions of $f_1,f_\infty$ appears in the description of singular points \eqref{eq-sing-z1inf}.
\end{remark}

\begin{proof}
The proof is similar to Lemma~\ref{lemma-x1inf}. When $z_0\ne\pm2$, the eigenvalues $\lambda_i$ of $\rho(\alpha_0)$ are distinct, which implies $A_1,A_\infty$ are diagonal. To show they are scalars, let
\begin{equation*}
\rho(A_1)v_i=z_1(i)v_i,\qquad \rho(A_\infty)v_i=z_\infty(i)v_i,
\end{equation*}
and we will show that $z_1(i),z_\infty(i)$ are given by the formulas above, which are independent of $i$.

By the same argument as Lemma~\ref{lemma-x1inf}, $z_1=z_1(i)$ and $z_\infty=z_\infty(i)$ has the form in \eqref{eq-x1inf-04}. Now apply \eqref{eq-relG} to the vector $v_i$. Using \eqref{eq-x1inf-04} and $\Pi_s\Pi_t=R$, The result is
\begin{align*}
0&=\left((\sigma^N-\sigma^{-N})f_1+\epsilon^2(1-\sigma^{2N})f_\infty+\sigma^N C_1+\epsilon^2 C_\infty\right)\Pi_s+\\
&\quad+\left((\sigma^{-N}-\sigma^N)f_1+\epsilon^2(1-\sigma^{-2N})f_\infty+\sigma^{-N} C_1+\epsilon^2 C_\infty\right)\Pi_t+\\
&\quad+(\text{terms constant on }E^0_{\sigma,\vec{w}}).
\end{align*}
The coefficients of $\Pi_s$ and $\Pi_t$ are zero for the same reason as Lemma~\ref{lemma-x1inf}. Solving for $f_1$ and $f_\infty$ gives us the form in \eqref{eq-f1inf-04}.
\end{proof}

\begin{proposition}\label{prop-gen-exist04}
There exists a type-$0$ representation for every point $(z_0,z_1,z_\infty,\vec{w})\in\centV$ with $z_0\ne\pm2$.
\end{proposition}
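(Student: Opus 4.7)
The plan is to follow the same strategy as Proposition~\ref{prop-gen-exist}: solve a linear system coming from Lemma~\ref{lemma-x1inf-04} to find the required values of $\Pi_s$ and $\Pi_t$, verify that these values lie on the curve $xy = R$ using the defining relation \eqref{eq-relG} of $\centV$, and then invoke Lemma~\ref{lemma-STimg04} to realize them by an actual point in $E^0_{\sigma,\vec{w}}$.

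First I would choose $\sigma$ with $\sigma^N + \sigma^{-N} = \epsilon^2 z_0$. Since $z_0 \ne \pm 2$, we have $\sigma^N \ne \pm 1$. By Lemma~\ref{lemma-x1inf-04}, a type-$0$ representation built from $\sigma$ and any $(\vec{s},\vec{t}) \in E^0_{\sigma,\vec{w}}$ realizes $A_1$ and $A_\infty$ as scalars
\begin{equation*}
\rho(A_1) = \sigma^N \Pi_s + \sigma^{-N}\Pi_t + f_1, \qquad \rho(A_\infty) = \epsilon^2(\Pi_s + \Pi_t) + f_\infty.
\end{equation*}
Requiring these to equal $z_1$ and $z_\infty$ gives a linear system in the unknowns $(x,y) = (\Pi_s,\Pi_t)$ whose coefficient matrix $\begin{pmatrix}\sigma^N & \sigma^{-N}\\ 1 & 1\end{pmatrix}$ is invertible, so $(x,y)$ is uniquely determined by Cramer's rule.

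Next I would verify the compatibility $xy = R(z_0,\vec{w})$. A short calculation using $\sigma^N + \sigma^{-N} = \epsilon^2 z_0$ and $(\sigma^N - \sigma^{-N})^2 = z_0^2 - 4$ shows
\begin{equation*}
xy = \frac{-(z_1 - f_1)^2 + z_0(z_1-f_1)(z_\infty - f_\infty) - (z_\infty - f_\infty)^2}{z_0^2 - 4}.
\end{equation*}
Substituting the formulas \eqref{eq-f1inf-04} for $f_1,f_\infty$ and clearing denominators, this identity $xy = R$ becomes (after multiplying by $(z_0^2 - 4)^2$) a polynomial identity in $z_0, z_1, z_\infty, C_0, C_1, C_\infty, \Gamma$ that is precisely equivalent to $H(z_0,z_1,z_\infty;C_0,C_1,C_\infty,\Gamma) = 4$, i.e.\ to the relation \eqref{eq-relG}. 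Since $(z_0,z_1,z_\infty,\vec{w}) \in \centV$ satisfies \eqref{eq-relG} by hypothesis, the identity $xy = R$ holds. This is the main calculational step, but its structure is dictated by the fact that the only relation between the three central generators $A_0,A_1,A_\infty$ over $\Ch(X_{\vec{W}})$ is \eqref{eq-relG}, so any quadratic relation of the required shape must be a multiple of it.

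Finally, Lemma~\ref{lemma-STimg04} provides $(\vec{s},\vec{t}) \in E^0_{\sigma,\vec{w}}$ with $\Pi_s(\vec{s},\vec{t}) = x$ and $\Pi_t(\vec{s},\vec{t}) = y$. The type-$0$ representation of $\skein_{q,\vec{w}}(\hsphere)$ built from this data via Proposition~\ref{prop-rep04} lifts to a representation of $\skein_q(\hsphere)$ on which the peripheral curves act by $p_i = w_i$, and by construction and Lemma~\ref{lemma-x1inf-04} the scalars $\rho(A_0), \rho(A_1), \rho(A_\infty)$ take the required values $z_0, z_1, z_\infty$. Since $Z$ is generated by $A_0, A_1, A_\infty$ and the $p_i$ (equivalently, the $P_i$), this determines the classical shadow to be $(z_0,z_1,z_\infty,\vec{w})$, completing the proof. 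The hardest part is the bookkeeping in the polynomial identity $xy = R \Leftrightarrow \eqref{eq-relG}$; everything else is a direct translation of the one-punctured torus argument.
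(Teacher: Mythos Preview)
Your proposal is correct and follows essentially the same approach as the paper, which simply states that the proof is identical to the one-punctured torus case (Proposition~\ref{prop-gen-exist}). You in fact supply more detail than the paper does on the ``careful calculation'' step, spelling out the formula for $xy$ and explaining why the resulting identity must reduce to \eqref{eq-relG}; note that since $z_0\ne\pm2$ is part of the hypothesis here, the degenerate branch of Proposition~\ref{prop-gen-exist} (the case $z_0=-2$, $n$ even) does not arise.
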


The proof is the same as the one-punctured torus case Proposition~\ref{prop-gen-exist}.

\subsection{Reducibility of type-$0$ representations}

The analog of Lemma~\ref{lemma-red} holds with the same proof.

\begin{lemma}\label{lemma-red04}
Suppose $z_0\ne\pm2$. A type-$0$ representation is reducible if and only if $\Pi_s=\Pi_t=0$ and $r_i=r_j=0$ for some $i\ne j$.
\end{lemma}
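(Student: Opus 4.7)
The plan is to carry over the proof of Lemma~\ref{lemma-red} essentially verbatim, with only cosmetic adjustments to account for the extra diagonal terms $d_iv_i$ and $d'_iv_i$ appearing in $\rho(\alpha_\infty)$ and $\rho(\alpha_1)$ in Proposition~\ref{prop-rep04}. These diagonal terms do not interfere with the eigenspace structure of $\rho(\alpha_0)$, and they preserve any subspace spanned by a subset of the basis $\{v_i\}$, so the combinatorial argument is unchanged.

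For the forward direction, I would first use that $\Pi_s = \Pi_t = 0$ together with $r_i = s_it_i = 0$ and $r_j = s_jt_j = 0$ for some $i\neq j$ to argue, exactly as in Lemma~\ref{lemma-red}(1), that one can reduce to the case $t_i = s_j = 0$ after possibly swapping the labels $i$ and $j$. Then I would check that $V_{(i,j]} := \Span\{v_{i+1},\dotsc,v_j\}$ is invariant under $\rho(\alpha_0),\rho(\alpha_\infty),\rho(\alpha_1)$. Invariance under $\rho(\alpha_0)$ is immediate since each $v_k$ is an eigenvector. For $\rho(\alpha_\infty)$, the offending boundary terms vanish: $t_i v_i$ in $\rho(\alpha_\infty)v_{i+1}$ is killed by $t_i=0$, and $s_j v_{j+1}$ in $\rho(\alpha_\infty)v_j$ is killed by $s_j=0$; the diagonal contribution $d_k v_k$ remains in $V_{(i,j]}$ automatically. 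The same check works for $\rho(\alpha_1)$ since its off-diagonal structure is the same.

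For the reverse direction, the key input is that the eigenvalues $\lambda_i = q^{4i}\sigma + q^{-4i}\sigma^{-1}$ of $\rho(\alpha_0)$ are pairwise distinct when $z_0 \ne \pm 2$. Indeed, $\lambda_i = \lambda_j$ with $q^{4i} \ne q^{4j}$ forces $\sigma^2 = q^{-4(i+j)}$, hence $\sigma^{2N}=1$, contradicting $\sigma^N \ne \pm 1$. Therefore any invariant subspace is a direct sum of eigenspaces, so of the form $V_B = \Span\{v_b \mid b \in B\}$ for some $B \subset \ints/N$. If $V_B$ is proper and nontrivial, one can pick $j \in B$ with $j+1 \notin B$ and $i \notin B$ with $i+1 \in B$; clearly $i \ne j$. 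The condition $\rho(\alpha_\infty)v_j \in V_B$ forces the $v_{j+1}$-coefficient to vanish, giving $s_j = 0$, and symmetrically $\rho(\alpha_\infty)v_{i+1} \in V_B$ forces $t_i = 0$. Consequently $r_i = r_j = 0$, and $\Pi_s = \Pi_t = 0$ is automatic.

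There is no real obstacle here; the only thing to watch is making sure that the diagonal terms $d_i v_i$ and $d'_i v_i$ do not spoil either direction. In the forward direction they stay within $V_{(i,j]}$; in the reverse direction they live in the eigenspace $\cx v_i \subset V_B$ and hence play no role in the coefficient extraction argument. As in Remark~\ref{rem-red}, the entire proof can be visualized by noting that the graph associated to $\rho(\alpha_\infty)$ now has a self-loop at each vertex, but the strong-connectivity criterion for irreducibility is governed only by the off-diagonal edges weighted by $s_i$ and $t_i$.
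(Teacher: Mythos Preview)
Your proof is correct and follows essentially the same approach as the paper, which simply remarks that ``the analog of Lemma~\ref{lemma-red} holds with the same proof.'' Your explicit handling of the diagonal terms $d_iv_i$, $d'_iv_i$ and the verification that the $\lambda_i$ are pairwise distinct when $z_0\ne\pm2$ spell out the minor adjustments needed, which is exactly what the paper leaves implicit.
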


Recall the set $\azuV^c$ defined in the beginning of the section. %Clearly, $\azuV^c$ is invariant under diffeomorphisms of $\hsphere$ and sign change actions of $H^\ast$.

\begin{lemma}\label{lemma-type0-red04}
Suppose $z=(z_0,z_1,z_\infty,\vec{w})\in\centV$ satisfies $z_0\ne\pm2$. Then a type-$0$ representation with classical shadow $z$ is reducible if and only if $z\in\azuV^c$.
\end{lemma}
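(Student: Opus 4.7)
The plan is to invoke Lemma~\ref{lemma-red04} and translate each of its two reducibility conditions ($\Pi_s=\Pi_t=0$ and $r_i=r_j=0$ for some $i\ne j$) into an intrinsic geometric condition on $z\in\centV$, then match the result to $\azuV^c=\centV_\ram\cup\centV'_\red$.

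First I would show that $\Pi_s=\Pi_t=0$ is equivalent to $z$ satisfying the linear system \eqref{eq-sing-z1inf}. By Lemma~\ref{lemma-x1inf-04}, $z_1=\sigma^N\Pi_s+\sigma^{-N}\Pi_t+f_1$ and $z_\infty=\epsilon^2(\Pi_s+\Pi_t)+f_\infty$. Since $z_0\ne\pm2$ gives $\sigma^N\ne\sigma^{-N}$, this linear system in $(\Pi_s,\Pi_t)$ is invertible, so $\Pi_s=\Pi_t=0$ iff $z_1=f_1$ and $z_\infty=f_\infty$. This is exactly the condition $\partial H/\partial z_1=\partial H/\partial z_\infty=0$ of Lemma~\ref{lemma-sing}, so it pins $z$ to the ``singular candidate'' locus cut out by the $z_1,z_\infty$-partials of $H$.

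Second, I would unpack the condition $r_i=r_j=0$. The factorization \eqref{eq-kfactor} writes $\kappa(w_a,w_b,\lambda'_k)$ as a product of two linear factors in $\lambda'_k$ with roots $\alpha_{ab}=-\eta_a\eta_b-(\eta_a\eta_b)^{-1}$ and $\beta_{ab}=-\eta_a\eta_b^{-1}-(\eta_a\eta_b^{-1})^{-1}$, where $w_i=\eta_i+\eta_i^{-1}$. Since $z_0\ne\pm2$, the $N$ values $\lambda'_1,\dotsc,\lambda'_N$ are distinct, so each of the four targets $\alpha_{12},\beta_{12},\alpha_{34},\beta_{34}$ is hit by at most one $k$; moreover, it is hit by some $k$ iff the corresponding factor in the product identity $\prod_k\kappa(w_a,w_b,\lambda'_k)=\kappa(W_a,W_b,z_0)$ vanishes. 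Thus ``$r_i=r_j=0$ with $i\ne j$'' becomes a combinatorial statement about which factors of $\kappa(W_1,W_2,z_0)$ and $\kappa(W_3,W_4,z_0)$ vanish and which of the $\alpha,\beta$ values coincide.

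With these translations the $(\Leftarrow)$ direction splits into two cases. If $z\in\centV_\ram$, WLOG $W_1=\pm2$, $w_1\ne\pm2$, and $z_0=-\tfrac{W_1}{2}W_2$; a direct check shows both factors of $\kappa(W_1,W_2,z_0)$ vanish, and since $w_1\ne\pm2$ (ramification) and $w_2\ne\pm2$ (forced by $z_0\ne\pm2$), the targets $\alpha_{12},\beta_{12}$ are distinct, yielding two distinct $k$'s. If $z\in\centV'_\red$, write $z=h(\vec{\eta})$ with $\omega=\eta_1\eta_2\eta_3\eta_4\ne1$ and $\omega^N=1$; then the first factors of both $\kappa(W_1,W_2,z_0)$ and $\kappa(W_3,W_4,z_0)$ vanish, producing targets $\alpha_{12}$ and $\alpha_{34}$. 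Using $\eta_3\eta_4=\omega/(\eta_1\eta_2)$, a short manipulation gives $\alpha_{12}=\alpha_{34}$ iff $(\eta_1\eta_2)^2=\omega$, which combined with $\omega^N=1$ would force $(\eta_1\eta_2)^{2N}=1$ and hence $z_0=\pm2$; this contradiction gives $\alpha_{12}\ne\alpha_{34}$, producing two distinct indices.

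The $(\Rightarrow)$ direction runs the same translations in reverse: reducibility forces $z$ to be singular via Step 1 plus Lemma~\ref{lemma-sing}, so $z\in\centV_\ram\cup h(B)$, and the ramified case is immediate. The main obstacle is to show that if $z=h(\vec{\eta})$ is reducible singular and $z\notin\centV_\ram$, then $z$ actually admits a parametrization $h(\vec{\eta})$ with $\omega\ne1$. The key structural fact is that when $\omega=1$ the identity $\eta_3\eta_4=(\eta_1\eta_2)^{-1}$ gives $\alpha_{34}=\alpha_{12}$ identically, so the ``first factor'' events collide at a single index, and extra vanishings from ``second factors'' require $W_i=\pm2$. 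One then checks that if some $W_i=\pm2$, either $w_i\ne\pm2$ (putting $z$ in $\centV_\ram$) or $w_i=\pm2$, in which case inverting the sign of $\eta_i$ produces an alternative lift $\vec{\eta}'$ of the same $z$ with a different $\omega'\ne1$ still satisfying $(\omega')^N=1$; in all remaining cases (all $W_i\ne\pm2$ and $\omega=1$), only one $k$ can satisfy $r_k=0$ and reducibility fails. Carefully tracking these sign cases is the delicate part of the argument.
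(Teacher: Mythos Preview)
Your $(\Leftarrow)$ direction is correct and essentially matches the paper's argument: for $\centV_\ram$ you use that $\kappa(W_1,W_2,z_0)$ has a double root while $\kappa(w_1,w_2,\cdot)$ has simple roots, and for $\centV'_\red$ you show $\alpha_{12}\ne\alpha_{34}$ via $(\eta_1\eta_2)^2\ne\omega$. Your framework in Steps~1--2 (translating $\Pi_s=\Pi_t=0$ to $z_1=f_1,z_\infty=f_\infty$, and reading $r_k=0$ through the four targets $\alpha_{12},\beta_{12},\alpha_{34},\beta_{34}$) is sound and is the same translation the paper uses.

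The $(\Rightarrow)$ direction, however, has a genuine gap. First, ``reducibility forces $z$ to be singular via Step~1 plus Lemma~\ref{lemma-sing}'' is not justified: Step~1 only gives $z_1=f_1,z_\infty=f_\infty$, which is necessary but not sufficient for singularity in Lemma~\ref{lemma-sing}; you still owe one of its conditions (1) or (2). More seriously, your re-parametrization step is wrong as stated. If $w_i=\pm2$ then $\eta_i=\pm1$, and ``inverting the sign of $\eta_i$'' changes $w_i$ to $\mp2$, so $h(\vec\eta')\ne z$; replacing $\eta_i$ by $\eta_i^{-1}$ does nothing since $\eta_i=\eta_i^{-1}$. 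More generally, any alternative lift of $z$ must have $\eta_j'\in\{\eta_j,\eta_j^{-1}\}$, and flipping $\eta_j$ only keeps $\vec\eta'\in B$ when $\eta_j^{2N}=1$, i.e.\ $W_j=\pm2$; so you cannot manufacture $\omega'\ne1$ from the single degenerate index $i$ alone, and the case analysis does not close.

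The paper avoids this by \emph{not} passing through an a priori lift $\vec\eta$. It cases directly on which $\kappa$-factor vanishes at each of the two indices: if both vanishings come from the same $\kappa(w_1,w_2,\cdot)$ (or $\kappa(w_3,w_4,\cdot)$), a short computation with \eqref{eq-kfactor} forces $q^{4i}=\eta_k^2\ne1$ for one $k\in\{1,2\}$, hence $\eta_k^{2N}=1$, $w_k\ne\pm2$, and $z\in\centV_\ram$; if the vanishings come from different $\kappa$'s, one \emph{chooses} $\eta_1,\eta_2$ so that $q^{4i+2}\sigma=-\eta_1\eta_2$ and $\eta_3,\eta_4$ so that $q^{4j+2}\sigma=-(\eta_3\eta_4)^{-1}$, which directly gives $\eta_1\eta_2\eta_3\eta_4=q^{4(i-j)}\ne1$ and $z=h(\vec\eta)\in\centV'_\red$. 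This construction of $\vec\eta$ from the reducibility data is the missing idea in your argument.
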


\begin{proof}[Proof of $\Rightarrow$]
By Lemma~\ref{lemma-red04} and \eqref{eq-x1inf-04}, in a reducible type-$0$ representation, $z_1=f_1$, $z_\infty=f_\infty$. We also have $r_i=r_j=0$ for some $i\ne j$. For convenience, we cyclically permute the basis so that $j=0$.

Case 1: $\kappa(w_1,w_2,\lambda'_i)=\kappa(w_1,w_2,\lambda'_0)=0$ or $\kappa(w_3,w_4,\lambda'_i)=\kappa(w_3,w_4,\lambda'_0)=0$. The two possibilities are similar, so we only consider the first one.

By \eqref{eq-kprod}, $\kappa(W_1,W_2,z_0)=0$. On the other hand, using \eqref{eq-kfactor} with $w_k=\eta_k+\eta_k^{-1}$, we can solve the equations to get
\begin{equation}\label{eq-ksplit}
q^{4i+2}\sigma=-\eta_1^{\epsilon_1}\eta_2^{\epsilon_2},\qquad
q^2\sigma=-\eta_1^{\epsilon'_1}\eta_2^{\epsilon'_2},
\end{equation}
where $\epsilon_k,\epsilon'_k=\pm1$. We can use the freedom to choose $\eta_k$ to set $\epsilon'_1=\epsilon'_2=-1$. Now take the quotient of the equations
\begin{equation*}
q^{4i}=\eta_1^{1+\epsilon_1}\eta_2^{1+\epsilon_2}.
\end{equation*}
This cannot be $1$ since $i\ne 0$, so we cannot have $\epsilon_1=\epsilon_2=-1$. Also, if $\epsilon_1=\epsilon_2=1$, then $(\eta_1\eta_2)^{2N}=(q^{4i})^N=1$, which would imply $z_0=\pm2$ via \eqref{eq-ksplit}. Therefore, exactly one of $\epsilon_1,\epsilon_2$ is positive, so we get $q^{4i}=\eta_k^2\ne1$ for one of $k\in\{1,2\}$. Then $\eta_k^{2N}=1$, so $T_N(w_k)=\eta_k^N+\eta_k^{-N}=\pm2$. We also have $\eta_k\ne\pm1$, so $w_k\ne\pm2$. Thus, $z\in\centV_\ram$.

Case 2: $\kappa(w_1,w_2,\lambda'_i)=\kappa(w_3,w_4,\lambda'_0)=0$. By \eqref{eq-kprod}, $\kappa(W_1,W_2,z_0)=\kappa(W_3,W_4,z_0)=0$. This means the classical shadow is a reducible singularity. As before, write $w_k=\eta_k+\eta_k^{-1}$. From $\kappa(w_1,w_2,\lambda'_i)=0$, we can choose $\eta_k$ such that $q^{4i+2}\sigma=-\eta_1\eta_2$. Then similarly, $q^2\sigma=-\eta_3^{-1}\eta_4^{-1}$. Thus, $\eta_1\eta_2\eta_3\eta_4=q^{4i}$ is an $N$-th root of unity not equal to $1$. By construction, $-(\eta_1\eta_2)^N-(\eta_1\eta_2)^{-N}=z_0$, and a straightforward calculation shows that $-(\eta_1\eta_3)^N-(\eta_1\eta_3)^{-N}=z_1$ and $-(\eta_1\eta_4)^N-(\eta_1\eta_4)^{-N}=z_\infty$. Thus, $z\in\centV'_\red$.
\end{proof}

\begin{proof}[Proof of $\Leftarrow$]
We essentially reverse the argument above. For both types of singularities, \eqref{eq-x1inf-04} implies $\Pi_s=\Pi_t=0$. Thus,
\begin{equation*}
\kappa(W_1,W_2,z_0)\kappa(W_3,W_4,z_0)=(z_0^2-4)^2R=0.
\end{equation*}

First suppose $z\in\centV_\ram$. Without loss of generality, we assume $T'_N(w_1)=0$ and $\kappa(W_1,W_2,z_0)=0$. Note $T'_N(w_1)=0$ implies $W_1=\pm2$ and $w_1\ne\pm2$. A quick calculation shows that $K_{1,2}(z_0)=\kappa(W_1,W_2,z_0)$ has a double root at $z_0=\mp W_2$. This also shows $W_2\ne\pm2$, so $w_2\ne\pm2$. Again by $z_0\ne\pm2$, $\sigma^N\ne\pm1$, so $f(\sigma)=K_{1,2}(\epsilon^2(\sigma^N+\sigma^{-N}))$ also has a double root. Note the discriminant of $\kappa_{1,2}(x)=\kappa(w_1,w_2,x)$ is $(w_1^2-4)(w_2^2-4)$, which does not vanish. By the factorization \eqref{eq-kprod}, $\kappa(w_1,w_2,\lambda'_i)$ vanishes for two different $i$. This implies $r_i$ vanish for two different $i$, so the type-$0$ representation is reducible by Lemma~\ref{lemma-red04}.

Now suppose $z\in\centV'_\red$. Then $z=h(\vec{\eta})$ for some $\vec{\eta}\in\cx^4$ such that $\eta_1\eta_2\eta_3\eta_4=\omega$ is an $N$-th root of unity not equal to $1$. We can write $\omega=q^{4k}$ for some $k\ne0$. Since $z_0=-(\eta_1\eta_2)^N-(\eta_1\eta_2)^{-N}$, after possibly relabeling the basis to change the choice of $\sigma$, we can write $\eta_1\eta_2=-q^{-2}\sigma^{-1}$. Then $\eta_3\eta_4=q^{4k}(\eta_1\eta_2)^{-1}=-q^{4k+2}\sigma$. Using \eqref{eq-kfactor}, we get $\kappa(w_1,w_2,\lambda'_0)=\kappa(w_3,w_4,\lambda'_k)=0$. Then $r_0=r_k=0$. Since $k\ne0$, the type-$0$ representation is reducible by Lemma~\ref{lemma-red04}.
\end{proof}

\subsection{Exceptional points}

In this section we construct a 4-dimensional family of representations with $z_0=\pm2$. The representation has a complicated form, so we only use it for exceptional points.

Let $\sigma=\pm1$, and let $\lambda_i$ be given by the same expression as before. If $N$ is odd, let $z_0=\epsilon^2(\sigma^N+\sigma^{-N})$ if $N$ is odd. If $N$ is even, then necessarily $\epsilon^2(\sigma^N+\sigma^{-N})=2$, but we allow both $z_0=\pm2$. For $\vec{w}\in\cx^4$, write $w_i=\eta_i+\eta_i^{-1}$.

\begin{proposition}\label{prop-except04}
Let
\begin{equation}\label{eq-z1inf-except04}
z_1=-(\eta_1\eta_4)^{-N}z_0-\eta_4^{-N}W_2-\eta_1^{-N}W_3,\qquad
z_\infty=-(\eta_1\eta_4)^N-(\eta_1\eta_4)^{-N}.
\end{equation}
Then $z=(z_0,z_1,z_\infty,\vec{w})\in\centV$. Define
\begin{equation}
N_-=\begin{cases}
-N/2,&\text{$N$ is even and $z_0=2$},\\
-(N-1)/2,&\text{otherwise},
\end{cases}\qquad
N_+=N_++N-1.
\end{equation}
Let $V$ be an $N$-dimensional vector space with a basis $\{u_i\mid i=N_-,N_-+1,\dotsc,N_+\}$. Then there exists a representation $\rho:\skein_q(\hsphere)\to\End(V)$ with classical shadow $z$ such that
\begin{align}
\rho(\alpha_0)u_i&=\begin{cases}\lambda_iu_i,&N_-\le i\le0,\\ \lambda_iu_i+u_{-i},&0<i\le N_+.\end{cases}\\
\rho(\alpha_\infty)u_i&=\begin{cases}
s_{N_-}u_{N_-+1}+d_{N_-}u_{N_-}+t_{N_--1}u_{N_+},&i=N_-,\\
s_iu_{i+1}+d_iu_i+t_{i-1}u_{i-1},&N_-<i\le0,\\
\beta_iu_{-i-1}+\bar{d}_iu_{-i}+\beta_{-i}u_{-i+1}+s'_{-i}u_{i-1}+d_iu_i+s'_iu_{i+1},&0<i\le N_+.
\end{cases}
\end{align}
Here $u_{N_--1}=u_{N_++1}=0$,  and
\begin{align}
\mu(x)&=\frac{(x+\eta_1\eta_2)(x+\eta_3\eta_4)(x^{-1}\eta_1+\eta_2)(x^{-1}\eta_4+\eta_3)}{\eta_1\eta_2\eta_3\eta_4}.\\
s_i&=\begin{cases}
v_1\mu(q^{4i+2}\sigma),&i\ne-N/2,\\
v_2\mu'(q^{4i+2}\sigma)+v_3\mu(q^{4i+2}\sigma),&i=N_-=-N/2,
\end{cases}\\
t_{i-1}&=\begin{cases}
v_4\mu(q^{-4i+2}\sigma),&i\ne0,\\
v_5\mu'(q^2\sigma)+v_6\mu(q^2\sigma),&i=0,
\end{cases}\\
s'_i&=\begin{cases}
v_7\mu(q^{4i+2}\sigma),&i\ne-1/2,-1,\\
0,&i=-1/2,-1,
\end{cases}\\
\beta_i&=\begin{cases}
v_8\mu'(q^{4i+2}\sigma)+v_9\mu(q^{4i+2}\sigma),&i\ne-1/2,\\0,&i=-1/2,
\end{cases}
\end{align}
where $v_1,\dotsc,v_9$ are coefficients depending on $i$ and $\sigma$, and $v_1,\dotsc,v_8$ are always nonzero.
\end{proposition}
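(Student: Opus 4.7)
The plan breaks into three phases. The first phase is to verify that the point $z=(z_0,z_1,z_\infty,\vec{w})$ defined by \eqref{eq-z1inf-except04} actually lies in $\centV$. Writing $w_i=\eta_i+\eta_i^{-1}$ so $W_i=\eta_i^N+\eta_i^{-N}$, the formulas for $z_1,z_\infty$ are precisely what one reads off from the character of the reducible representation $r\colon\pi_1(\hsphere)\to SL_2$ given (up to sign) by $r(p_i)=-\diag(\eta_i^N,\eta_i^{-N})$, twisted appropriately when $\prod\eta_i^N\ne1$. The defining equation \eqref{eq-relG} then holds automatically, reducing to a trace identity for $2\times2$ matrices.

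Second, I would construct the representation using an ansatz that directly generalizes the exceptional constructions of Propositions~\ref{prop-except-odd} and \ref{prop-except-even}. The repeated eigenvalues $\lambda_i=\lambda_{-i}$ at $z_0=\pm2$ force the Jordan structure $\rho(\alpha_0)u_i=\lambda_i u_i+u_{-i}$ for $0<i\le N_+$, and the graph of $\rho(\alpha_\infty)$ should mirror Figures~\ref{fig-GZ-odd} and \ref{fig-GZ-4}, with the extra diagonal entries $d_i,\bar{d}_i$ encoding the four-puncture data as the diagonal $d_i$ did in the type-$0$ case \eqref{eq-d}. The function $\mu(x)$ is essentially the numerator of $r_i$ from \eqref{eq-ri04} after applying the factorization \eqref{eq-kfactor}: it is the product of four linear factors, each of which vanishes on exactly one of the "reducible" loci in the space of peripheral traces. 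Thus $\mu(q^{4i+2}\sigma)$ plays the role that $r_i$ played for type-$0$ representations, with its derivative $\mu'$ providing the Jordan-type perturbation needed at coincidences.

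Third, I would determine the coefficient sequences $v_1,\dotsc,v_9$ by enforcing the skein relations \eqref{eq-rel04-1}--\eqref{eq-rel04-0} and \eqref{eq-relG-gen}, together with $\rho(p_i)=w_i$. Relations \eqref{eq-rel04-1} and \eqref{eq-rel04-inf} determine $\rho(\alpha_1)$ and most of the diagonal entries from $\rho(\alpha_0)$ and $\rho(\alpha_\infty)$ by linear algebra, exactly as in the type-$0$ case. The remaining relation \eqref{eq-rel04-0} and the quartic relation \eqref{eq-relG-gen} become a system of scalar equations indexed by $i$ that fix $v_2,v_3,v_5,v_6,v_8,v_9$ through the boundary compatibility at $i=0$ (where generalized and ordinary eigenvectors meet) and at $i=N_-,N_+$ (where the basis wraps around for odd $N$ but terminates for even $N$ with $z_0=2$), while leaving $v_1,v_4,v_7$ determined up to overall rescaling. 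Finally, one verifies that $\rho(A_m)$ acts by the scalar $z_m$ for $m=0,1,\infty$ by a path-counting argument in the associated graph analogous to Lemma~\ref{lemma-x1inf-04}.

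The hard part will be the algebraic bookkeeping at the junction points $i=0,N_-,N_+$: the six coefficients among $v_2,\dotsc,v_9$ carrying the $\mu'$-perturbation must precisely absorb the discrepancies introduced by the Jordan blocks, and checking that a single function $\mu'$ suffices to balance all three relations simultaneously is the crux. As with the one-punctured torus exceptional cases, I expect the detailed verification to be relegated to an appendix, with the nonvanishing of $v_1,\dotsc,v_8$ emerging explicitly and feeding directly into the reducibility analysis of Lemma~\ref{lemma-red04-2}.
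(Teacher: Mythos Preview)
Your plan is plausible in outline but takes a genuinely different route from the paper. You propose a direct ansatz: posit the Jordan form for $\rho(\alpha_0)$ and the shape of $\rho(\alpha_\infty)$, then solve for the coefficients $v_1,\dotsc,v_9$ by imposing the skein relations \eqref{eq-rel04-1}--\eqref{eq-rel04-0} and \eqref{eq-relG-gen}. The paper instead obtains the representation as a \emph{limit} of honest type-$0$ representations. Concretely, for $\sigma$ not a root of unity one sets
\[
s_i=-\frac{\mu(q^{4i+2}\sigma)}{\hat\lambda'_i\hat\lambda_i},\qquad
t_i=-\frac{\mu(q^{-4i-2}\sigma^{-1})}{\hat\lambda'_i\hat\lambda_{i+1}},
\]
checks via \eqref{eq-kmu} that $s_it_i=r_i$, and reads off the classical shadow from Lemma~\ref{lemma-x1inf-04}. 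One then performs the same change of basis as in Appendices~\ref{sec-except-odd} and \ref{sec-except-even} and lets $\sigma\to\pm1$ (or $\sigma\to\pm q^{-2}$ when $N$ is even and $z_0=-2$). The relations hold automatically in the limit by continuity; no equations need to be solved. The derivative $\mu'$ does not arise as a perturbation to be balanced against the Jordan blocks but simply as the output of L'H\^opital's rule when computing limits such as $\beta_i=(s_i-t_{-i-1})/(\lambda_i-\lambda_{-i})$ as $\sigma\to\pm1$.

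The practical difference is substantial. Your approach would require verifying the cubic and quartic relations directly on an operator with both Jordan blocks and the six-term $\rho(\alpha_\infty)u_i$ expression for $i>0$, and then arguing that the resulting system for $v_2,\dotsc,v_9$ is consistent. The paper's limit argument bypasses all of this: the $v_k$ are explicit rational functions of $q$ and $\sigma$ from the outset, the relations are inherited, and the nonvanishing of $v_1,\dotsc,v_8$ is read off from the explicit limiting formulas. Your first phase (checking $z\in\centV$ as a character) is fine but also unnecessary in the paper's setup, since $z$ is by construction the classical shadow of a genuine representation for generic $\sigma$ and hence of its limit.
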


The proof is given in Appendix~\ref{sec-except-04} by explicit construction. Some coefficients here correspond to coefficients with asterisks in the Appendix. The graph of $\rho(\alpha_\infty)$ (with self-loops removed) is shown in Figures~\ref{fig-GZ04}, where the thick black lines are $t_{-1}$ and $s_{N_-}$, the thick blue lines are $\beta_{\pm i}$, the dashed lines are $\bar{d}_i$, and the remaining regular lines are $s_i$ and $t_{i-1}$.

\begin{figure}
\centering
\begin{subfigure}{\linewidth}
\centering
\begin{tikzpicture}[every loop/.style={}]
\node (N) at (10,1){$0$};
\node (1) at (8,1){$-1$};\node (m1) at (8,-1){$1$};
\node (2) at (6,1){$-2$};\node (m2) at (6,-1){$2$};
\node (d) at (4,1){$\dots$};\node (md) at (4,-1){$\dots$};
\path (1) edge[<->] (2) edge[->] (N);
\path (m1) edge[<->] (m2) edge[thick,blue,->] (N);
\path[bend left,->] (N) edge[thick] (1) edge (m1);
\path[dashed,->] (m1) edge (1);
\path[dashed,->] (m2) edge (2);
\path (2) edge[<->] (d) (m2) edge[<->] (md);
\node (hm1) at (2,1){$N_-+1$};
\node (hp1) at (2,-1){$N_+-1$};
\node (hm) at (0,1){$N_-$};
\node (hp) at (0,-1){$N_+$};
\path[<->] (hm1) edge (d) edge (hm);
\path[<->] (hp1) edge (md) edge (hp);
\path[thick,blue,->] (m1) edge (2);
\path[thick,blue,->] (m2) edge (1);
\draw (3,0)node{$\dots$} (5,0)node{$\dots$};
\path (hp1) edge[thick,blue,->] (hm) edge[dashed,->] (hm1);
\path[->] (hp) edge[thick,blue] (hm1) edge[dashed,bend right] (hm);
\path[->] (hm) edge (hp);
\end{tikzpicture}
\subcaption{$N$ is odd}
\end{subfigure}
\\[1em]
\begin{subfigure}{\linewidth}
\centering
\begin{tikzpicture}[every loop/.style={}]
\node (1) at (8,1){$-\frac{1}{2}$};\node (m1) at (8,-1){$\frac{1}{2}$};
\node (2) at (6,1){$-\frac{3}{2}$};\node (m2) at (6,-1){$\frac{3}{2}$};
\node (d) at (4,1){$\dots$};\node (md) at (4,-1){$\dots$};
\path (1) edge[<->] (2) edge[->] (m1);
\path (m1) edge[<->] (m2);
\path[dashed,->,bend left] (m1) edge (1);
\path[dashed,->] (m2) edge (2);
\path (2) edge[<->] (d) (m2) edge[<->] (md);
\node (hm1) at (2,1){$N_-+1$};
\node (hp1) at (2,-1){$N_+-1$};
\node (hm) at (0,1){$N_-$};
\node (hp) at (0,-1){$N_+$};
\path[<->] (hm1) edge (d) edge (hm);
\path[<->] (hp1) edge (md) edge (hp);
\path[thick,blue,->] (m1) edge (2);
\path[thick,blue,->] (m2) edge (1);
\draw (3,0)node{$\dots$} (5,0)node{$\dots$};
\path (hp1) edge[thick,blue,->] (hm) edge[dashed,->] (hm1);
\path[->] (hp) edge[thick,blue] (hm1) edge[dashed,bend right] (hm);
\path[->] (hm) edge (hp);
\end{tikzpicture}
\caption{$N$ is even and $z_0=-2$}
\end{subfigure}
\\[1em]
\begin{subfigure}{\linewidth}
\centering
\begin{tikzpicture}[every loop/.style={}]
\node (2N) at (-1,1){$N_-$};
\node (1) at (1,1){$N_-+1$};\node (m1) at (1,-1){$N_+$};
\node (2) at (3,1){$N_-+2$};\node (m2) at (3,-1){$N_++1$};
\node (d) at (5,1){$\dots$};\node (md) at (5,-1){$\dots$};
\path (1) edge[<->] (2) edge[->] (2N);
\path (m1) edge[<->] (m2) edge[thick,blue,->] (2N);
\path (2) edge[<->] (d); \path (m2) edge[<->] (md);
\path[->,bend right] (2N) edge[thick] (1) edge (m1);
\path[dashed,->] (m1) edge (1); \path[dashed,->] (m2) edge (2);
\path[thick,blue,->] (m1) edge (2);
\path[thick,blue,->] (m2) edge (1);
\node (hm1) at (7,1){$-2$};\node (hp1) at (7,-1){$2$};
\node (hm) at (9,1){$-1$};\node (hp) at (9,-1){$1$};
\path[<->] (hm1) edge (d) edge (hm);
\path[<->] (hp1) edge (md) edge (hp);
\draw (4,0)node{$\dots$} (6,0)node{$\dots$};
\node (N) at (11,1){$0$};
\path[->] (hp1) edge[thick,blue] (hm) edge[dashed] (hm1);
\path[->] (hp) edge[thick,blue] (hm1) edge[dashed] (hm);
\path[->,bend left] (N) edge[thick] (hm) edge (hp);
\path[<-] (N) edge[thick,blue] (hp) edge (hm);
\end{tikzpicture}
\caption{$N$ is even and $z_0=2$}
\end{subfigure}
\caption{The associated graph of $\rho(\alpha_\infty)$}
\label{fig-GZ04}
\end{figure}
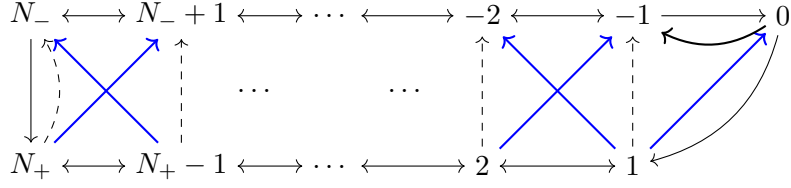
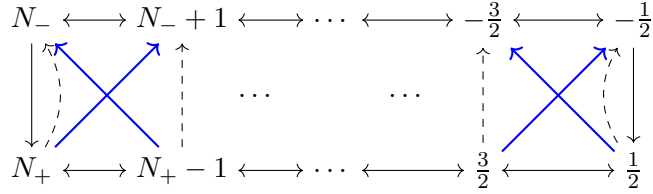
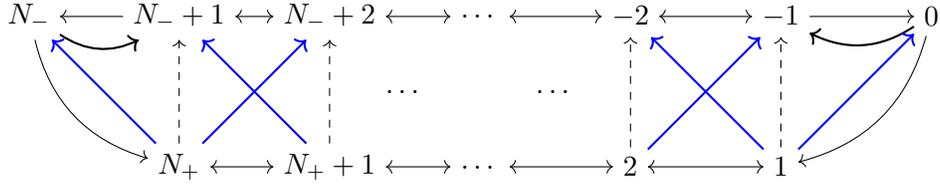

It is possible to show that the representation in the proposition covers all exceptional points up to symmetries of $\centV$, but determining the reducibility for all of them is too much work. We can reduce the cases to check using the fact that the Azumaya locus is open.

\begin{lemma}\label{lemma-except-nonAzu}
If $z\in\azuV^c$ is exceptional, then $z$ is not in the Azumaya locus.
\end{lemma}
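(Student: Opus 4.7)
My approach is to exploit Zariski openness of the Azumaya locus $\azuV$ (from the general theory recalled in the introduction), so that the non-Azumaya locus is Zariski closed. It therefore suffices to show that every exceptional $z \in \azuV^c$ lies in the Zariski closure of the non-exceptional locus of $\azuV^c$, since those points are already known to be non-Azumaya. To verify the latter, consider a non-exceptional $z \in \azuV^c$. Choose a slope $m$ with $A_m(z) \ne \pm 2$, and let a mapping-class diffeomorphism of $\hsphere$ take $m$ to $0$. The induced algebra automorphism of $\skein_q(\hsphere)$ gives an automorphism of $\centV$ preserving $\azuV$ (intrinsically) and also preserving $\azuV^c$: the singularity type of $z$ is invariant under diffeomorphisms, and the scalar $\omega = \eta_1\eta_2\eta_3\eta_4$ attached to a reducible character is preserved up to inversion by any permutation or inversion of the peripheral classes, which keeps the condition $\omega \ne 1$ intact. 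Thus we may assume $A_0(z) \ne \pm 2$, and then Propositions~\ref{prop-rep04}--\ref{prop-gen-exist04} produce a type-$0$ representation of dimension $D = N$ with classical shadow $z$, which is reducible by Lemma~\ref{lemma-type0-red04}. Since at an Azumaya point $S/\mathfrak{m}_z S \cong M_D(\cx)$ has only the simple $D$-dimensional module up to isomorphism, the existence of a reducible $D$-dimensional representation at $z$ rules out $z$ being Azumaya.

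For an exceptional $z \in \azuV^c$, I would establish the density claim by a dimension count. Lemma~\ref{lemma-except-04} shows the exceptional locus of $\charV$ is finite, and since $\centV \to \charV$ is a finite map, the exceptional locus of $\centV$ is also finite. On the other hand, each irreducible component of $\azuV^c$ has positive dimension: $\centV_\ram$ stratifies by the choice of puncture $i$, sign of $r(p_i) = \pm I$, and a ramified value of $w_i$ (of which there are finitely many), with each stratum parameterized by the remaining three peripheral data modulo conjugation and hence three-dimensional; each $h(B_\omega)$ for $\omega^N = 1$, $\omega \ne 1$, is the generically finite image of the three-dimensional variety $B_\omega$. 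Removing finitely many points from a positive-dimensional algebraic set leaves a Zariski dense subset, so $z$ is in the closure of the non-exceptional locus of $\azuV^c$, hence in the closed non-Azumaya locus.

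The main obstacle is organizational: one must confirm that every exceptional $z \in \azuV^c$ genuinely lies on a positive-dimensional component of $\azuV^c$ rather than on some spurious isolated component. This reduces to a finite enumeration using the explicit description of singularities in Theorem~\ref{thm-sing} and of $h(B_\omega)$ above, together with the finite list of exceptional characters from Lemma~\ref{lemma-except-04}.
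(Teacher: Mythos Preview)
Your approach is essentially the same as the paper's. Both proofs use that $\azuV^c\setminus\exceptV$ is non-Azumaya (via Lemma~\ref{lemma-type0-red04} transported by diffeomorphisms), that the Azumaya locus is open so its complement is closed, and that the finitely many exceptional points lie in the closure of $\azuV^c\setminus\exceptV$ because $\azuV^c$ has no zero-dimensional components. Your write-up supplies more justification for the steps the paper leaves implicit (diffeomorphism-invariance of $\azuV^c$, the positive-dimensionality count), and your closing remark about ruling out isolated components is precisely the content of the paper's one-line claim that ``$\azuV^c$ does not have $0$-dimensional components.''
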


\begin{proof}
Let $\exceptV$ denote the set of exceptional points. Combining Lemma~\ref{lemma-type0-red04} with diffeomorphisms of the surface, we know $\azuV^c\setminus\exceptV$ is disjoint from the Azumaya locus, so the closure is too. Since $\exceptV$ is finite and $\azuV^c$ does not have $0$-dimensional components, it is easy to show that every point of $\azuV^c\cap\exceptV$ is in the closure.
\end{proof}

Recall the map $h$ defined at the beginning of this section.

\begin{lemma}\label{lemma-except-remain}
Up to symmetries of $\centV$, the exceptional points not in $\azuV^c$ are $z=(z_0,z_1,z_\infty,\vec{w})$ with
\begin{enumerate}
\item $z_0=z_1=z_\infty=-2$, $w_i=\pm2$ for $i=1,2,3,4$.
\item $z=h(\vec{\eta})$ with $\eta_1\eta_2\eta_3\eta_4=1$ and $\eta_i^{2N}=-1$.
\item $z_0=2$, $z_1=z_\infty=\pm2$, $w_1=2$, $W_2=-2$, $W_3=W_4=-z_1$.
\end{enumerate}
\end{lemma}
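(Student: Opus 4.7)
The plan is to enumerate the exceptional points of $\centV$ and, for each, test whether it lies in $\azuV^c=\centV_\ram\cup\centV'_\red$. By Lemma~\ref{lemma-except-04}, up to diffeomorphisms of $\hsphere$ and the $H^\ast$-action on $\charV$, the exceptional characters in $\charV$ fall into three families: (a) central representations with $r(p_i)=\pm I$; (b) cyclic-of-order-$4$ representations ($W_i=0$ for all $i$); and (c) the family \eqref{eq-allpm2}. Each exceptional point of $\centV$ lifts one of these under the branched cover $\centV\to\charV$, and is specified by a choice of $w_i$ with $T_N(w_i)=W_i$. The three cases then need separate case-by-case analysis using Theorem~\ref{thm-sing} (to detect $\centV_\ram$) and the parameterization $h$ of $\centV_\red$ (to detect $\centV'_\red$ versus $h(B_1)$).

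For case (a), the image of $r$ is abelian so $z\in\centV_\red$, and any $h$-preimage $\vec{\eta}$ satisfies $\eta_i\in\{\pm1\}$ exactly when all $w_i=\pm 2$. If some $w_i\ne\pm 2$, then $T'_N(w_i)=0$ together with $r(p_i)=\pm I$ puts $z$ in $\centV_\ram$ by Theorem~\ref{thm-sing}(1). Otherwise, since $T'_N(\pm 2)=\pm N^2\ne 0$, the lift avoids $\centV_\ram$; it lies in $\centV'_\red$ precisely when $\prod\eta_i=-1$ is a non-trivial $N$-th root of unity, which forces $N$ even. Hence the lifts outside $\azuV^c$ are exactly those with all $w_i=\pm 2$ and $\prod\eta_i=1$, matching the form (1). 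For case (b), $W_i=0$ forces $\eta_i^{2N}=-1$ in any $h$-preimage. Writing $\eta_i=e^{\iunit\pi(2k_i+1)/(2N)}$ one verifies $w_i\ne\pm 2$ and $T'_N(w_i)\ne 0$ (the value $2\cos(\pi(2k_i+1)/(2N))$ is not a critical value $2\cos(j\pi/N)$ since $2k_i+1$ is odd). So such lifts avoid $\centV_\ram$, and they avoid $\centV'_\red$ exactly when $\prod\eta_i=1$, which is form (2).

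Case (c) is different in that the image of \eqref{eq-allpm2} is non-abelian and irreducible (the upper- and lower-triangular parabolics share no common eigenvector), so $z\notin\centV_\red$ and only $\centV_\ram$ matters. By Theorem~\ref{thm-sing}(1) applied with $i=1$ (the unique puncture with $r(p_i)=\pm I$), $z\in\centV_\ram$ iff $T'_N(w_1)=0$, i.e., $w_1\ne\pm 2$. The only lifts outside $\azuV^c$ therefore have $w_1\in\{2,-2\}$. For $N$ odd only $w_1=2$ is a root of $T_N(w_1)=W_1=2$, while for $N$ even both $\pm 2$ are roots; in the latter case the symmetry $\phi_{\{1,2\}}\in H^\ast$ simultaneously sends $w_1\mapsto -w_1$ and $w_2\mapsto -w_2$ (preserving $z_0,z_1,z_\infty$ since $N$ is even), identifying the $w_1=-2$ lift with a $w_1=2$ lift of the same $W_2=-2$. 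Up to symmetry we may therefore take $w_1=2$, giving form (3); the relation $W_3=W_4=-z_1$ follows from the trace identities for \eqref{eq-allpm2}.

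The main obstacle I anticipate is the careful bookkeeping of the $H^\ast$-action and the other symmetries on the coordinates $(z_0,z_1,z_\infty,\vec{w})$, since the effect on $z_m$ depends on the parity of $N$ through $T_N(-x)=(-1)^NT_N(x)$. In particular, one must check that no further symmetry collapses forms (1)--(3) or carries them into $\azuV^c$. Once the orbit structure is understood, the remaining computations are elementary applications of Theorem~\ref{thm-sing} and the explicit formula for $h$.
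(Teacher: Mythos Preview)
Your approach is essentially the same as the paper's: run through the three families of exceptional characters from Lemma~\ref{lemma-except-04}, lift to $\centV$, and for each family find a condition guaranteeing membership in $\azuV^c$, leaving the complement as the stated forms (1)--(3). The paper's argument is terser---it does not bother to separately rule out $\centV_\ram$ in case~(b) or $\centV'_\red$ in case~(c), since exhibiting one reason for $z\in\azuV^c$ suffices---but the skeleton is identical.

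One caution: your ``precisely'' and ``exactly'' claims in cases~(a) and~(b) go beyond what the lemma requires and are not fully justified as stated. In case~(b), for instance, a point $z$ can have several $h$-preimages (global inversion $\vec{\eta}\mapsto\vec{\eta}^{\,-1}$ preserves $h(\vec{\eta})$ but sends $\prod\eta_i$ to its inverse, and there may be further coincidences), so ``$z\notin\centV'_\red$ exactly when $\prod\eta_i=1$'' conflates a property of $z$ with a property of a particular preimage. The lemma only needs the forward direction: if $z\notin\azuV^c$ then $z\in\centV_\red\setminus\centV'_\red\subset h(B_1)$, so \emph{some} preimage has $\prod\eta_i=1$, which is form~(2). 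That direction you do establish, and the paper argues only in that direction as well. Similarly, the lemma does not assert that every point of form (1)--(3) lies outside $\azuV^c$; it is only used in Lemma~\ref{lemma-red04-2} as a list of candidates to check for irreducibility.
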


\begin{proof}
The exceptional points of $\charV$ are given in Lemma~\ref{lemma-except-04}. Regarding each $W_i$ as the function $T_N(w_i)$, we get all exceptional points of $\centV$.

Consider the first type of exception points in Lemma~\ref{lemma-except-04}, where $z$ is the character of a central representation $r:\pi_1(\hsphere)\to\{\pm I\}$. If one of $w_i\ne\pm2$, then $z\in\centV_\ram\subset\azuV^c$. %$w_i=\pm2$, then by an $H^\ast$ action, we can set $w_i=2$ for all $i$. In this case,
Thus, $z$ is (1) in this case.

Now consider the second type in Lemma~\ref{lemma-except-04}. %The corresponding representation $r:\pi_1(\hsphere)\to SL_2$ is reducible, so
Then $z\in\centV_\red$. By definition, if $z\notin\azuV^c$, then $z\in h(B_1)$. In this case, $z$ is (2).

Finally, consider the last type in Lemma~\ref{lemma-except-04}. Similar to the first type, if $w_1\ne\pm2$, then $z\in\centV_\ram$. If $w_1=\pm2$, then we can use the $H^\ast$ action to set $w_1=2$. We can also permute the punctures to have $W_2=-2$. %If $N$ is odd, then the $H^\ast$ action can also set $W_3=W_4=-2$. Otherwise, $W_3=W_4=2$ also needs to be included. These points are (3) and (4).
In this case, $z$ is (3).
\end{proof}

\begin{lemma}\label{lemma-red04-2}
If $z\in\centV\setminus\azuV^c$ is exceptional, then $z$ is in the Azumaya locus.
\end{lemma}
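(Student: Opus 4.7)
The plan is to apply Lemma~\ref{lemma-except-remain}, which reduces the claim (up to symmetries of $\centV$ that preserve reducibility of representations) to checking the three families of exceptional points listed there. For each family I would exhibit an $N$-dimensional irreducible representation with the prescribed classical shadow, using the construction in Proposition~\ref{prop-except04} as the input and verifying irreducibility by the graph method of Remark~\ref{rem-red}.

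First I would match each family to a choice of $\sigma \in \{\pm 1\}$ and $\vec\eta \in (\cx^\times)^4$ in Proposition~\ref{prop-except04} so that $w_i = \eta_i + \eta_i^{-1}$ and $z_0 = \epsilon^2(\sigma^N + \sigma^{-N})$, or $z_0 = 2$ in the extra even-$N$ branch. Case (1) uses $\sigma = -1$ with each $\eta_i \in \{\pm 1\}$; case (2) uses the prescribed $\vec\eta$ with $\eta_1\eta_2\eta_3\eta_4 = 1$ and $\eta_i^{2N} = -1$, and $\sigma$ is recovered from $\eta_1\eta_2$; case (3) uses $\sigma$ with $\sigma^N = 1$ (so $z_0 = 2$) together with $\eta_1 = 1$, $\eta_2^N = -1$, and appropriate $\eta_3, \eta_4$. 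I would then verify that the formulas \eqref{eq-z1inf-except04} yield the correct $z_1, z_\infty$ in each case, which is a direct computation using $\eta_1\eta_4 \in \{\pm 1\}$ or $(\eta_1\eta_4)^N = \pm 1$ as appropriate.

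The core of the proof is irreducibility. Following the strategy of Lemmas~\ref{lemma-red-2o} and \ref{lemma-red-2e}, I would analyze the graph of $\rho(\alpha_\infty)$ pictured in Figure~\ref{fig-GZ04}. Since $\sigma = \pm 1$ makes the eigenvalues $\lambda_i$ of $\rho(\alpha_0)$ pairwise distinct for $N_- \le i \le 0$, any nontrivial invariant subspace $U$ must contain some eigenvector $u_i$ with $N_- \le i \le 0$. The relation $\rho(\alpha_0 - \lambda_i)u_i = u_{-i}$ for $0 < i \le N_+$ shows it suffices to propagate from one eigenvector to every other $u_j$, $N_- \le j \le 0$, using the edges whose weights $s_j, t_{j-1}, s'_j, \beta_j$ are nonzero, in the inductive style used at the end of the proofs of Lemmas~\ref{lemma-red-2o} and \ref{lemma-red-2e}.

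The main obstacle is controlling the zero locus of the factor $\mu(q^{4i+2}\sigma)$ at each position $i$ for the specific parameter choices of the three cases, since the nonzero coefficients $v_1,\dotsc,v_8$ in Proposition~\ref{prop-except04} only guarantee generic nonvanishing. In each case one must verify that the cancellations in $\mu$ forced by $\eta_k \in \{\pm 1\}$, or $\eta_1\eta_2\eta_3\eta_4 = 1$, or the relations $W_2 = -2$ and $W_3 = W_4 = -z_1$, are not severe enough to disconnect the graph. I expect the factored form of $\mu$ to make each case a finite hand check, and anticipate that the ``long'' edges $t_{-1}$ and $s_{N_-}$ together with the blue edges $\beta_{\pm i}$ survive in every case, carrying the inductive propagation across any local vanishing and forcing $U = V$.
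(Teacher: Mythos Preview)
Your high-level plan matches the paper's: reduce via Lemma~\ref{lemma-except-remain}, realize each surviving family by Proposition~\ref{prop-except04}, and argue irreducibility through the graph of $\rho(\alpha_\infty)$ as in Lemmas~\ref{lemma-red-2o} and \ref{lemma-red-2e}. The paper uses $\sigma=-q^{2N}$ for family~(1) and $\sigma=q^{2N}$ for families~(2) and~(3); your ``recover $\sigma$ from $\eta_1\eta_2$'' in case~(2) is off, since Proposition~\ref{prop-except04} forces $\sigma=\pm1$.

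There is, however, a genuine gap in your irreducibility sketch. The heart of the paper's argument is a precise control of the zero set of $\mu(q^{4i+2}\sigma)$. For families~(2) and~(3) one has $(\eta_1\eta_2)^N=-1$ and \emph{exactly one} of $(\eta_3\eta_4)^N,(\eta_1\eta_2^{-1})^N,(\eta_4\eta_3^{-1})^N$ equals $-1$; this pins down a single $k$ with $\eta_1\eta_2=-q^{4k+2}\sigma$, so $\mu(q^{4i+2}\sigma)=0$ exactly at $i=k,-k-1$, and (when $k\neq\tfrac{N-1}{2}$) these are \emph{simple} roots, whence $\mu'\neq0$ there. Your expectation that the ``long'' edges $t_{-1}$ and $s_{N_-}$ and the blue edges $\beta_{\pm i}$ ``survive in every case'' is not correct without this: at $k=0$ the $\mu$-part of $t_{-1}$ vanishes, and at $k=N_+$ with $N$ even the $\mu$-part of $s_{N_-}$ vanishes, while in the generic case $0<k<N_+$ both $s_k,t_k$ and $s'_k,s'_{-k-1}$ vanish simultaneously. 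What rescues connectivity each time is precisely the $\mu'$-terms hidden in the coefficients $v_2,v_5,v_8$ of Proposition~\ref{prop-except04}, which force $t_{-1}$, $s_{N_-}$, and the relevant $\beta_k,\beta_{-k-1}$ to be nonzero. The paper then splits into four subcases according to the position of $k$ and runs the graph-propagation separately in each. Without recognizing the simple-root mechanism and the role of $\mu'$, the ``finite hand check'' you anticipate would fail at exactly these boundary and interior cases.
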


\begin{proof}
We just need to check the points given in Lemma~\ref{lemma-except-remain} using the representation $\rho$ in Proposition~\ref{prop-except04}.

For the first point, take $\sigma=-q^{2N}$ and $\eta_i=\pm1$. Then the roots of $\mu(x)$ are $\pm1$. A quick calculation shows that $q^{4i+2}\sigma=\pm1$ if and only if $i=\frac{N-1}{2}$. %Then $t_{N_--1}=0$, but all other $s_i,t_i,s'_i$ are nonzero.
Then the arguments of Lemmas~\ref{lemma-red-2o} and \ref{lemma-red-2e} show that $\rho$ is irreducible.

For the other two types of points, $z_0=2$, so we can take $\sigma=q^{2N}$. Then $(-q^2\sigma)^N=(-\epsilon^2)\epsilon^2=-1$. For the second point, we take $\vec\eta$ such that $z=h(\vec\eta)$. Then $z_0=2$ forces $(\eta_1\eta_2)^N=-1$. For the last point, we have $\eta_1=1$, $\eta_2^N=-1$, and $\eta_3^N=\eta_4^N=\pm1$.

In both cases, $(\eta_1\eta_2)^N=-1$, and exactly one of $(\eta_3\eta_4)^N,(\eta_1\eta_2^{-1})^N,(\eta_4\eta_3^{-1})^N$ is $-1$. This shows $\eta_1\eta_2=-q^{4k+2}\sigma$ for some $k$, and $\mu(q^{4i+2}\sigma)=0$ if and only if $i=k,-k-1$. By exchanging all $\eta_i$ with their inverses if necessarily, we can assume $0\le k\le N_+$. If $k\ne\frac{N-1}{2}$, then we also see that $x=q^{\pm(4k+2)}\sigma$ do not have multiplicity as roots of $\mu(x)$, so $\mu'(x)\ne0$ at these roots.

Now we consider the following cases.

Case 1: $N$ is odd and $k=N_+$. The argument is the same as the first point.

Case 2: $N$ is even and $k=N_+$. Then $-k-1=N_-$, so $t_{N_-}=0$,
$s_{N_-}=v_2\mu'(q^{4N_-+2}\sigma)\ne0$, and all other $s_i,t_i,s'_i$ are nonzero. Moreover, $\beta_{N_+}=v_8\mu'(q^{4N_++2}\sigma)\ne0$. The argument of irreducibility is similar to Lemma~\ref{lemma-red-2e}. Because all $s_i,t_i$ other than $t_{N_-}$ are nonzero, we can conclude $u_{N_-+1},\dotsc,u_0$ are in every invariant subspace. Then starting with $u_0$, we also get $u_1$ since $s_0\ne0$, and we can continue all the way to $u_{N_+}$ since $s'_i\ne0$. Finally, we get $u_{N_-}$ from $u_{N_+}$ using $\beta_{N_+}\ne0$.

Case 3: $k=0$. Then $s_0=0$, $t_{-1}=v_5\mu'(q^2\sigma)\ne0$, and all other $s_i,t_i,s'_i$ are nonzero. Moreover, $\beta_{-1}=v_8\mu'(q^{-2}\sigma)\ne0$. This is similar to Case 2. The details are omitted.

Case 4: $0<k<N_+$. Then $s_k=s_{-k-1}=0$, $t_{-k-1}=t_k=0$, $s'_k=s'_{-k-1}=0$, and all other $s_i,t_i,s'_i$ are nonzero. Moreover, $\beta_k,\beta_{-k-1}\ne0$ by the same calculation as the previous cases. Proceed as Lemma~\ref{lemma-red-2e}. At least one set of $\{u_{N_-},\dotsc,u_{-k-1}\}$ or $\{u_{-k},\dots,u_0\}$ is in any invariant subspace. Take the second possibility as an example. Starting with $u_0$, we can also get $u_1,\dots,u_k$. Since $\beta_k\ne0$, $u_k$ leads to $u_{-k-1}$. Then looping around from $u_{N_-}$, we get all the remaining vectors.
\end{proof}

\appendix

\section{Representations at exception points of the one-punctured torus}

\subsection{Proof of Proposition~\ref{prop-except-odd}}\label{sec-except-odd}

Given $\sigma\in\cx$ not a root of unity and $k$ an integer, let
\begin{equation}
s_i=(q^{2i-k+1}\sigma-q^{-2i+k-1}\sigma^{-1})/\hat\lambda_i,\qquad
t_i=(q^{2i+k+1}\sigma-q^{-2i-k-1}\sigma^{-1})/\hat\lambda_{i+1}.
\end{equation}
Then
\begin{align*}
s_it_i&=\frac{-q^{2k}-q^{-2k}+q^{4i+2}\sigma^2+q^{-4i-2}\sigma^{-2}}{\hat\lambda_i\hat\lambda_{i+1}},\\
\prod_{i=1}^N s_i&=\frac{(q^{-k+1}\sigma)^N-(q^{-k+1}\sigma)^{-N}}{\sigma^N-\sigma^{-N}}=(q^N)^{k-1}=\epsilon^{k-1}
\end{align*}
by Lemma~\ref{lemma-Cheb-prod}, and similarly, $\prod_{i=1}^N t_i=\epsilon^{k-1}$. This defines a type-$0$ representation with $w=-q^{2k}-q^{-2k}$, and \eqref{eq-x1inf-a} shows that the corresponding classical shadow is $(z_0,2\epsilon^kz_0,2\epsilon^{k-1},w)$ with $z_0=\sigma^N+\sigma^{-N}$.

In the limit $\sigma\to1$, the classical shadow becomes $(2,2\epsilon^k,2\epsilon^{k-1},w)$. However, the limits of $s_i,t_i$ do not all exist. We can circumvent this by defining a new basis. Let $\bar{N}=\frac{N-1}{2}$. Fix the range of indices to be $-\bar{N}\le i\le\bar{N}$. Define
\begin{equation}
u_i=v_i,\quad -\bar{N}\le i\le0,\qquad
u_i=\frac{v_i-v_{-i}}{\lambda_i-\lambda_{-i}},\quad 0<i\le\bar{N}.
\end{equation}
Then
\begin{equation}\label{eq-trace2-0-odd}
\rho(\alpha_0)u_i=\begin{cases}
\lambda_iu_i,&-\bar{N}\le i\le0,\\
\lambda_iu_i+u_{-i},&0<i\le\bar{N}.
\end{cases}
\end{equation}
Now consider $\rho(\alpha_\infty)$ with the basis $\{u_i\}$. After some routine calculations, we get
\begin{equation}
\rho(\alpha_\infty)u_i=\begin{cases}
s_{-\bar{N}}u_{-\bar{N}+1}+t_{\bar{N}}u_{-\bar{N}}+t^\ast_{-\bar{N}-1}u_{\bar{N}},&i=-\bar{N},\\
s_iu_{i+1}+t_{i-1}u_{i-1},&-\bar{N}<i<0,\\
s^\ast_0u_1+t^\ast_{-1}u_{-1},&i=0,\\
\beta_iu_{-i-1}+\beta_{-i}u_{-i+1}+t'_{i-1}u_{i-1}+s'_iu_{i+1},&0<i<\bar{N},\\
\beta_{\bar{N}}u_{-\bar{N}}+\beta_{-\bar{N}}u_{-\bar{N}+1}+t'_{\bar{N}-1}u_{\bar{N}-1}-t_{\bar{N}}u_{\bar{N}},&i=\bar{N},
\end{cases}
\end{equation}
where
\begin{gather}
s'_i=\frac{\lambda_{i+1}-\lambda_{-i-1}}{\lambda_i-\lambda_{-i}}s_i,\qquad
t'_{i-1}=\frac{\lambda_{-i+1}-\lambda_{i-1}}{\lambda_{-i}-\lambda_i}t_{i-1},\qquad
\beta_i=\frac{s_i-t_{-i-1}}{\lambda_i-\lambda_{-i}}.
\label{eq-2trace-consts}\\
t^\ast_{-\bar{N}-1}=(\lambda_{\bar{N}}-\lambda_{-\bar{N}})t_{\bar{N}},\qquad
s^\ast_0=(\lambda_1-\lambda_{-1})s_0,\qquad
t^\ast_{-1}=s_0+t_{-1}.
\label{eq-2trace-consts2}
\end{gather}
All coefficients have limits as $\sigma\to1$. Then the limit will define a representation with the expected classical shadow. Specifically, the limits of all $s_i,s'_i,t_{i-1},t'_{i-1}$ appeared can be obtained by substitution. Using $\simeq$ to denote having the same limit as $\sigma\to1$, the remaining coefficients are
\begin{equation*}
\left\{
\begin{aligned}
&t_{i-1}\simeq s_{-i},\quad
t'_{i-1}\simeq s'_{-i},\quad
\beta_i\simeq\frac{2q^{k-1}-2q^{-k+1}}{(q^{2i}-q^{-2i})^3},\quad
\beta_{-i}\simeq-\beta_i,\qquad i\ne0.\\
&t^\ast_{-\bar{N}-1}\simeq0,\quad
t^\ast_{-1}\simeq q^{k-1}+q^{-k+1},\quad
s^\ast_0\simeq(q^2-q^{-2})(q^{-k+1}-q^{k-1}),\quad
-t_{\bar{N}}\simeq s'_{\bar{N}}.
\end{aligned}
\right.
\end{equation*}
This gives the formulas in Proposition~\ref{prop-except-odd}.

\subsection{Proof of Proposition~\ref{prop-except-even}}\label{sec-except-even}

Similar to the odd $n$ case, we can construct type-$0$ representations using
\begin{equation}
s_i=(q^{2i-2k}\sigma-q^{-2i+2k}\sigma^{-1})/\hat\lambda_i,\qquad
t_i=(q^{2i+2k+2}\sigma-q^{-2i-2k-2}\sigma^{-1})/\hat\lambda_{i+1},
\end{equation}
and verify that the classical shadow is $(z_0,z_0,2,-q^{4k+2}-q^{-4k-2})$ with $z_0=\sigma^{2N}+\sigma^{-2N}$. Fix the range of indices to be $-N\le i<N$. Define a new basis
\begin{equation}
u_i=v_i,\quad -N\le i\le0,\qquad
u_i=\frac{v_i-v_{-i}}{\lambda_i-\lambda_{-i}},\quad 0<i<N.
\end{equation}
Then
\begin{equation}
\rho(\alpha_0)u_i=\begin{cases}
\lambda_iu_i,&-N\le i\le 0,\\
\lambda_iu_i+u_{-i},&0<i<N.
\end{cases}
\end{equation}
Now consider $\alpha_\infty$ with the basis $\{u_i\}$. After some routine calculations, we get
\begin{equation}
\rho(\alpha_\infty)u_i=\begin{cases}
s^\ast_{-N}u_{-N+1}+t^\ast_{-N-1}u_{N-1},&i=-N,\\
s_iu_{i+1}+t_{i-1}u_{i-1},&-N<i<0,\\
s^\ast_0u_1+t^\ast_{-1}u_{-1},&i=0,\\
\beta_iu_{-i-1}+\beta_{-i}u_{-i+1}+t'_{i-1}u_{i-1}+s'_iu_{i+1},&0<i<N,
\end{cases}
\end{equation}
where $s'_i,t'_{i-1},\beta_i$ are given by \eqref{eq-2trace-consts}, and
\begin{equation}\label{eq-2trace-consts-ev}
s^\ast_{-N}=s_{-N}+t_{N-1},\qquad
t^\ast_{-N-1}=(\lambda_{N-1}-\lambda_{-N+1})t_{N-1}.
\end{equation}
Note the formula uses a non-existent vector $u_N$, but the coefficient $s'_{N-1}$ is $0$ by definition.

All coefficients have limits as $\sigma\to1$. Then the limit will define a representation as well. The limits of all $s_i,s'_i,t_{i-1},t'_{i-1}$ appeared can be obtained by substitution. Using $\simeq$ to denote having the same limit as $\sigma\to1$, the remaining coefficients are
\begin{equation*}
\left\{
\begin{aligned}
&t_{i-1}\simeq s_{-i},\quad
t'_{i-1}\simeq s'_{-i},\quad
\beta_i\simeq\frac{2q^{2k}-2q^{-2k}}{(q^{2i}-q^{-2i})^3},\quad
\beta_{-i}\simeq-\beta_i\qquad i\ne-N,0.\\
&t^\ast_{-1}=s^\ast_{-N}\simeq q^{2k}+q^{-2k},\qquad
t^\ast_{-N-1}\simeq(q^2-q^{-2})(q^{2k}-q^{-2k})\simeq-s^\ast_0.
\end{aligned}
\right.
\end{equation*}
This agrees with Proposition~\ref{prop-except-even}.

\section{Representations at exception points of the four-punctured sphere}
\label{sec-except-04}

Using \eqref{eq-kfactor}, we can show
\begin{equation}\label{eq-kmu}
\mu(x)\mu(x^{-1})=\kappa(w_1,w_2,x+x^{-1})\kappa(w_3,w_4,x+x^{-1}).
\end{equation}

If $\sigma$ is not a root of unity, let
\begin{equation}
s_i=-\frac{\mu(q^{4i+2}\sigma)}{\hat\lambda'_i\hat\lambda_i},\qquad
t_i=-\frac{\mu(q^{-4i-2}\sigma^{-1})}{\hat\lambda'_i\hat\lambda_{i+1}}.
\end{equation}
By \eqref{eq-kmu}, this defines a type-$0$ representation. Then
\begin{equation*}
\begin{aligned}
\prod_{i=1}^Ns_i&=(-1)^N\frac{(\epsilon^2\sigma^N+\eta_1^N\eta_2^N)(\epsilon^2\sigma^N+\eta_3^N\eta_4^N)(\epsilon^2\sigma^{-N}\eta_1^N+\eta_2^N)(\epsilon^2\sigma^{-N}\eta_4^N+\eta_3^N)}{q^{2N}(z_0^2-4)(\eta_1\eta_2\eta_3\eta_4)^N}\\
&=-\epsilon^2\frac{M(\epsilon^2\sigma^N)}{z_0^2-4},
\end{aligned}
\end{equation*}
where $M(x)$ has the same form as $\mu(x)$ except each $\eta_i$ is replaced by $\eta_i^N$. %{eq-lambda-prod04}
Similarly,
\begin{equation*}
\prod_{i=1}^Nt_i=-\epsilon^2\frac{M(\epsilon^2\sigma^{-N})}{z_0^2-4}.
\end{equation*}
Now we can use Lemma~\ref{lemma-x1inf-04} to verify \eqref{eq-z1inf-except04}.

\subsection{$N$ is odd}\label{sec-except-odd04}

To obtain $z_0=\pm2$, we use the change of basis in Section~\ref{sec-except-odd} and take the limit $\sigma\to\pm1$. Then $\rho(\alpha_0)$ is given by \eqref{eq-trace2-0-odd}. For $\rho(\alpha_\infty)$, we use the notations from \eqref{eq-2trace-consts}, \eqref{eq-2trace-consts2}, and
\begin{equation}\label{eq-2trace-diag}
d^\ast_{\pm\bar{N}}=d_{\pm\bar{N}}\mp t_{\bar{N}},\qquad
\bar{d}_i=\frac{d_i-d_{-i}}{\lambda_i-\lambda_{-i}},\qquad
\bar{d}^\ast_{\bar{N}}=\frac{d_{\bar{N}}-d_{-\bar{N}}+s_{\bar{N}}-t_{\bar{N}}}{\lambda_{\bar{N}}-\lambda_{-\bar{N}}}.
\end{equation}
Then in the new basis,
\begin{equation}
\rho(\alpha_\infty)u_i=\begin{cases}
s_{-\bar{N}}u_{-\bar{N}+1}+d^\ast_{-\bar{N}}u_{-\bar{N}}+t^\ast_{-\bar{N}-1}u_{\bar{N}},&i=-\bar{N},\\
s_iu_{i+1}+d_iu_i+t_{i-1}u_{i-1},&-\bar{N}<i<0,\\
s^\ast_0u_1+d_0u_0+t^\ast_{-1}u_{-1},&i=0,\\
\beta_iu_{-i-1}+\bar{d}_iu_{-i}+\beta_{-i}u_{-i+1}+t'_{i-1}u_{i-1}+d_iu_i+s'_iu_{i+1},&0<i<\bar{N},\\
\bar{d}^\ast_{\bar{N}}u_{-\bar{N}}+\beta_{-\bar{N}}u_{-\bar{N}+1}+t'_{\bar{N}-1}u_{\bar{N}-1}+d^\ast_{\bar{N}}u_{\bar{N}},&i=\bar{N}.
\end{cases}
\end{equation}
We can verify that every coefficient has a limit as $\sigma\to\pm1$. In particular, all $s_i,s'_i,t_{i-1},t'_{i-1},d_i$ appeared are obtained by direct substitution $\sigma=\pm1$. Recall $\simeq$ denotes having the same limit as $\sigma\to\pm1$. Then
\begin{equation}\label{eq-limit-odd}
\left\{
\begin{aligned}
&t_{i-1}\simeq s_{-i},\qquad t'_{i-1}\simeq s'_{-i},\qquad i\ne0,\bar{N},\\
&t^\ast_{-\bar{N}-1}\simeq q^{2N}\sigma\mu(q^{2N}\sigma),\qquad
s^\ast_0\simeq-(q^2+q^{-2})\sigma\mu(q^2\sigma),\\
&t^\ast_{-1}\simeq\frac{q^2+q^{-2}}{(q^2-q^{-2})^2}\mu(q^2\sigma)-\frac{q^2\sigma}{q^2-q^{-2}}\mu'(q^2\sigma),\\
&\beta_i\simeq\frac{2(q^{8i+2}-q^{-8i-2})}{\hat\lambda_i^3(\hat\lambda'_i)^2}\mu(q^{4i+2}\sigma)-\frac{q^{4i+2}\sigma}{\hat\lambda_i^2\hat\lambda'_i}\mu'(q^{4i+2}\sigma).
\end{aligned}
\right.
\end{equation}
The last two are obtained using L'H\^opital's rule, which should be evident from the derivatives. The remaining constants $d^\ast_{\pm\bar{N}},\bar{d}_i,\bar{d}^\ast_{\bar{N}}$ are omitted.

\subsection{$N$ is even and $z_0\to-2$}\label{sec-except-even-m2}

One way to obtain $z_0=-2$ is to take the limit $\sigma\to\pm q^{-2}$. Let $\tilde{\sigma}=q^2\sigma$ and $\tildei=i-1/2$. In all coefficients, $\sigma$ is multiplied by $q^{4i}$. Since $q^{4\tildei}\tilde{\sigma}=q^{4i}\sigma$, we can use half-integer indices in expressions such as $\lambda_{\tildei}$ or $s_{\tildei}$ if we replace $\sigma$ with $\tilde\sigma$. This is the convention used in this section.

Let $\bar{N}=\frac{N-1}{2}$. We fix the range of indices to be $-\bar{N}\le\tildei\le\bar{N}$. Then $\lambda_{\tildei}\simeq\lambda_{-\tildei}$. Define a new basis
\begin{equation}
u_{\tildei}=v_i,\quad -\bar{N}\le\tildei<0,\qquad
u_{\tildei}=\frac{v_i-v_{-i+1}}{\lambda_{\tildei}-\lambda_{-\tildei}},\quad 0<\tildei\le\bar{N}.
\end{equation}
Then
\begin{equation}
\rho(\alpha_0)u_i=\begin{cases}
\lambda_{\tildei}u_{\tildei},&-\bar{N}\le\tildei<0,\\
\lambda_{\tildei}u_{\tildei}+u_{-\tildei},&0<\tildei\le\bar{N}.
\end{cases}
\end{equation}
We use the constants from \eqref{eq-2trace-consts} and \eqref{eq-2trace-diag}, $t^\ast_{-\bar{N}-1}$ from \eqref{eq-2trace-consts2}, and the following constants.
\begin{gather}
s^\ast_{-1/2}=(\lambda_{1/2}-\lambda_{-1/2})s_{-1/2},\qquad
d^\ast_{\pm1/2}=d_{\pm1/2}\mp s_{-1/2},\\
\bar{d}^\ast_{1/2}=\frac{d_{1/2}-d_{-1/2}-s_{-1/2}+t_{-1/2}}{\lambda_{1/2}-\lambda_{-1/2}}.
\end{gather}
Then
\begin{equation}
\rho(\alpha_\infty)u_i=\begin{cases}
s_{-\bar{N}}u_{-\bar{N}+1}+d^\ast_{-\bar{N}}u_{-\bar{N}}+t^\ast_{-\bar{N}-1}u_{\bar{N}},&\tildei=-\bar{N},\\
s_{\tildei}u_{\tildei+1}+d_{\tildei}u_{\tildei}+t_{\tildei-1}u_{\tildei-1},&-\bar{N}<\tildei<-1/2,\\
s^\ast_{-1/2}u_{1/2}+d^\ast_{-1/2}u_{-1/2}+t_{-3/2}u_{-3/2},&\tildei=-1/2,\\
\beta_{1/2}u_{-3/2}+\bar{d}^\ast_{1/2}u_{-1/2}+d^\ast_{1/2}u_{1/2}+s'_{1/2}u_{3/2},&\tildei=1/2,\\
\beta_{\tildei}u_{-\tildei-1}+\bar{d}_{\tildei}u_{-\tildei}+\beta_{-\tildei}u_{-\tildei+1}+t'_{\tildei-1}u_{\tildei-1}+d_{\tildei}u_{\tildei}+s'_{\tildei}u_{\tildei+1},&1/2<\tildei<\bar{N},\\
\bar{d}^\ast_{\bar{N}}u_{-\bar{N}}+\beta_{-\bar{N}}u_{-\bar{N}+1}+t'_{\bar{N}-1}u_{\bar{N}-1}+d^\ast_{\bar{N}}u_{\bar{N}},&\tildei=\bar{N}.
\end{cases}
\end{equation}
The limits in \eqref{eq-limit-odd} still hold (with $\sigma$ replaced by $\tilde\sigma\to\pm1$), and
\begin{equation}
s^\ast_{-1/2}\simeq\tilde\sigma\mu(\tilde\sigma).
\end{equation}

\subsection{$N$ is even and $z_0\to2$}

We use the change of basis in Section~\ref{sec-except-even} with $D=2N$ replaced by $N$. Then in the new basis
\begin{equation}
\rho(\alpha_\infty)u_i=\begin{cases}
s^\ast_{-N/2}u_{-N/2+1}+d_{-N/2}u_{-N/2}+t^\ast_{-N/2-1}u_{N/2-1},&i=-N/2,\\
s_iu_{i+1}+d_iu_i+t_{i-1}u_{i-1},&-N/2<i<0,\\
s^\ast_0u_1+d_0u_0+t^\ast_{-1}u_{-1},&i=0,\\
\beta_iu_{-i-1}+\bar{d_i}u_{-i}+\beta_{-i}u_{-i+1}+t'_{i-1}u_{i-1}+d_iu_i+s'_iu_{i+1},&0<i<N/2,
\end{cases}
\end{equation}
where we used the constants from \eqref{eq-2trace-consts}, \eqref{eq-2trace-consts2}, \eqref{eq-2trace-consts-ev} ($N$ replaced with $N/2$), and \eqref{eq-2trace-diag}. Same as Section~\ref{sec-except-even}, the formula uses a non-existent vector $u_{N/2}$, but the coefficient $s'_{N/2-1}$ is $0$ by definition. The limits in \eqref{eq-limit-odd} still hold, and
\begin{equation*}
\left\{
\begin{aligned}
&s^\ast_{-N/2}\simeq\frac{q^2+q^{-2}}{(q^2-q^{-2})^2}\mu(q^{-2N+2}\sigma)+\frac{q^2\sigma}{q^2-q^{-2}}\mu'(q^{-2N+2}\sigma),\\
&t^\ast_{-N/2-1}\simeq(q^2+q^2)\sigma\mu(q^{-2N+2}\sigma).
\end{aligned}
\right.
\end{equation*}

\printbibliography

\end{document}